\documentclass[10pt]{article}
\usepackage{amsmath,amssymb,bm,amsthm,array,tikz,enumerate}
\usepackage{url}
\usepackage{lscape}
\usepackage[margin=30truemm]{geometry}
\usepackage{float}
\usepackage{makecell}

\usetikzlibrary{positioning,graphs}

\numberwithin{equation}{section}

\DeclareMathOperator{\Spec}{Spec}
\DeclareMathOperator{\tr}{tr}

\DeclareMathOperator{\Ker}{Ker}

\DeclareMathOperator{\Cay}{Cay}
\DeclareMathOperator{\Aut}{Aut}

\newcommand{\MC}[1]{\mathcal{#1}}
\newcommand{\MB}[1]{\mathbb{#1}}

\newcommand{\BM}[1]{{\bm #1}}

\newtheorem{thm}{Theorem}[section]
\newtheorem{lem}[thm]{Lemma}
\newtheorem{pro}[thm]{Proposition}
\newtheorem{cor}[thm]{Corollary}
\newtheorem{question}[thm]{Question}

\theoremstyle{definition}
\newtheorem{defi}[thm]{Definition}

\allowdisplaybreaks

\newcommand{\G}{\Gamma}

\newcommand{\sikaku}{\mathrel{\square}}

\allowdisplaybreaks

\title
{
Strongly regular and strongly walk-regular graphs that admit perfect state transfer
}

\author{
\hfill
\makebox[0.4\textwidth][c]{
Sho Kubota\thanks{
Department of Mathematics Education,
Aichi University of Education,
1 Hirosawa, Igaya-cho, Kariya, Aichi 448-8542, Japan.
\texttt{skubota@auecc.aichi-edu.ac.jp}}
}
\hfill
\makebox[0.4\textwidth][c]{
Hiroto Sekido\thanks{
Graduate School of Environment Information Sciences, Yokohama National University, Hodogaya,
Yokohama 240-8501 Japan.
\texttt{sekido-hiroto-zk@ynu.jp}}
}
\hfill
\and
\hfill
\makebox[0.4\textwidth][c]{
Harunobu Yata 
}
\hfill
\makebox[0.4\textwidth][c]{
Kiyoto Yoshino\thanks{
Department of Computer Science,
Faculty of Applied Information Science,
Hiroshima Institute of Technology,
Saeki Ward, Hiroshima, 731-5143, Japan.
\texttt{k.yoshino.n9@cc.it-hiroshima.ac.jp}
}
}
\hfill
}
\date{}

\begin{document}
\maketitle
\begin{abstract}
We study perfect state transfer in Grover walks on two important classes of graphs: strongly regular graphs and strongly walk-regular graphs.
The latter class is a generalization of the former.
We first give a complete classification of strongly regular graphs that admit perfect state transfer.
The only such graphs are the complete bipartite graph $K_{2,2}$ and the complete tripartite graph $K_{2,2,2}$.
We then show that, 
if a connected strongly walk-regular graph that is not a strongly regular graph
admits perfect state transfer,
then its spectrum must be of the form $\{[k]^1, [\frac{k}{2}]^{\alpha}, [0]^{\beta}, [-\frac{k}{2}]^{\gamma}\}$,
and we enumerate all feasible spectra of this form up to $k=20$ with the help of a computer.
These results are obtained using techniques from algebraic number theory and spectral graph theory,
particularly through the analysis of eigenvalues and eigenprojections of a normalized adjacency matrix.
While the setting is in quantum walks,
the core discussion is developed entirely within the framework of spectral graph theory.
\vspace{8pt} \\
{\it Keywords:} perfect state transfer, periodicity, Grover walk, strongly regular graph, strongly walk-regular graph \\
{\it MSC 2020 subject classifications:} 05C50; 81Q99
\end{abstract}

\section{Introduction}

Quantum walks are a powerful tool in quantum computing and quantum information processing, with potential applications in quantum algorithms~\cite{A}, quantum simulations~\cite{NSCR},
and quantum cryptography~\cite{VRMPS}. 
They have also been studied from a mathematical perspective, as they exhibit interesting properties not typically found in classical random walks.
Here, by {\it classical random walks} we mean Markov chains whose transition probability matrix is given by $D^{-1}A$, where $D$ and $A$ denote the degree matrix and the adjacency matrix of a graph, respectively.
In this sense, phenomena specific to quantum walks include localization and periodicity, both of which can occur under certain conditions and in particular graph structures.

Perfect state transfer, the main topic of this paper,
is also a phenomenon specific to quantum walks, except in trivial cases.
Roughly speaking, perfect state transfer refers to the phenomenon where a quantum state initially localized at one vertex is transferred perfectly to another vertex at a specific time.
In the context of quantum information theory,
it guarantees faithful transfers between specific quantum states,
and thus
has important applications in quantum communication and quantum information processing~\cite{CDEJ}.

The quantum walk model used in this paper is Grover walk,
which is a typical discrete-time quantum walk.
Previous studies for perfect state transfer in Grover walks are summarized in Table~\ref{0228-1}.
For its counterpart in continuous-time quantum walks,
see the survey by Godsil~\cite{G2012} or the unpublished note by Coutinho and Godsil~\cite{CG}.

\begin{table}[h]
  \centering
  \caption{Previous studies on perfect state transfer in Grover walks} \label{0228-1}
  \begin{tabular}{|c|c|}
\hline
Graphs & Ref. \\
\hline
\hline
Diamond chains & \cite{KT} \\ \hline
$\overline{K_2} + \overline{K_n}$,
$\overline{K_2} + C_n$ & \cite{BPAK} \\ \hline
$\Cay(\MB{Z}_{2 \ell}, \{\pm a, \pm b\})$ with $a+b=\ell$ & \cite{Z2019} \\ \hline
$K_{m, m, \dots, m}$ & \cite{KS}  \\ \hline
Circulant graphs with valency up to $4$ & \cite{KY}  \\ \hline
Unitary Cayley graphs & \cite{BB2024-1} \\ \hline
Quadratic unitary Cayley graphs & \cite{BB2024-2} \\ \hline
\makecell{Unitary and quadratic unitary Cayley graphs \\ over finite commutative rings} & \cite{BB2025} \\ \hline

\end{tabular}
\end{table}

See later sections for detailed definitions of terminology.
Here, we first give an overview of our main theorems.
We study strongly regular and strongly walk-regular graphs that admit perfect state transfer.
For strongly regular graphs,
we completely classify the graphs that admit perfect state transfer.
This is the first main theorem.

\begin{thm}
A connected strongly regular graph $\G$ admits perfect state transfer if and only if it is isomorphic to $K_{2,2}$ or $K_{2,2,2}$.
\end{thm}

It should be emphasized that this statement was essentially first discovered by Guo and Schmeits~\cite{GS}.
They studied a weaker notion of perfect state transfer,
called {\it peak state transfer},
and classified the strongly regular graphs that admit peak state transfer.
Based on their result,
our theorem immediately follows from another previous study.
Nevertheless, the proof we present here is independent and relies on algebraic integers.

Furthermore, we investigate perfect state transfer in strongly walk-regular graphs, which is one of the classes that include strongly regular graphs.
\emph{Strongly walk-regular graphs} were introduced by Van Dam and Omidi~\cite{vDO} with the property that the number of $\ell$-walks between two vertices $x,y$ depends only on whether they are equal, adjacent, or non-adjacent.
See Section~\ref{S4} for more detailed definitions.
Strongly walk-regular graphs fall into one of the following classes:
empty graphs,
disjoint unions of complete graphs with the same order,
connected strongly regular graphs,
disjoint unions of complete bipartite graphs of the same size and isolated vertices,
or connected regular graphs with four distinct eigenvalues.
We call graphs in the last class \emph{genuine}.
The second main theorem provides the conditions that the eigenvalues of genuine strongly walk-regular graphs admitting perfect state transfer must satisfy.

\begin{thm}
Let $\G$ be a genuine strongly walk-regular graph with distinct eigenvalues $k > \theta_1 > \theta_2 > \theta_3$.
If $\G$ admits perfect state transfer, then $(\theta_1, \theta_2, \theta_3) = (\frac{k}{2}, 0, -\frac{k}{2})$. 
\end{thm}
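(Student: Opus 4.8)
\medskip
\noindent\emph{Proof strategy.}
The plan is to squeeze out of perfect state transfer enough spectral information to pin $\theta_1/k,\theta_2/k,\theta_3/k$ down to a short list, and then eliminate every entry of the list but $(\tfrac12,0,-\tfrac12)$ using the rigidity of genuine strongly walk-regular graphs. The analysis is organised by the degrees of the $\theta_i/k$ over $\mathbb{Q}$.

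First I would collect the inputs. On the quantum-walk side: a graph admitting perfect state transfer is periodic, so each eigenvalue $\theta$ of $A$ satisfies $\tfrac1\pi\arccos(\theta/k)\in\mathbb{Q}$, equivalently $2\theta/k=\zeta+\zeta^{-1}$ for a root of unity $\zeta$; hence every Galois conjugate of $\theta_i/k$ is again a real cosine of a rational angle, and since $k\in\mathbb{Z}$ and the minimal polynomial of $A$ over $\mathbb{Q}$ is $(x-k)(x-\theta_1)(x-\theta_2)(x-\theta_3)$, the conjugates of each $\theta_i$ all lie in $\{\theta_1,\theta_2,\theta_3\}$. Thus $p(x):=(x-\theta_1)(x-\theta_2)(x-\theta_3)\in\mathbb{Z}[x]$ and $\{\theta_1/k,\theta_2/k,\theta_3/k\}$ is Galois-stable. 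Because $\G$ is walk-regular every vertex has full eigenvalue support, so perfect state transfer between vertex states further forces, for each $i$, a phase identity $e^{\mathrm i\tau\arccos(\theta_i/k)}=\pm1$ whose signs are read off from the eigenprojections of $\tfrac1kA$ (the Perron eigenvalue giving $+$); I will also use, where it shortens things, that these signs alternate when the eigenvalues are listed in decreasing order. On the combinatorial side, the van Dam--Omidi structure theory (together with an elementary observation) gives that $\G$ is strongly $\ell$-walk-regular for some \emph{odd} $\ell\ge3$ --- even $\ell$ is impossible since then $\theta_1,\theta_2,\theta_3$ would be three distinct real solutions of $x^\ell=bx+a$ with $x^\ell$ strictly convex --- and $A^\ell=bA+cJ+aI$ with $a,b\in\mathbb{Z}$ and $c\ge1$ (otherwise $k,\theta_1,\theta_2,\theta_3$ would be four distinct real roots of $x^\ell-bx-a$, impossible for odd $\ell$); consequently $\theta_i^\ell=b\theta_i+a$ (so the points $(\theta_i,\theta_i^\ell)$ are collinear and $p(x)\mid x^\ell-bx-a$) while $k^\ell=bk+cn+a$ with $n$ the order of $\G$. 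Finally, a genuine strongly walk-regular graph is non-bipartite (a bipartite connected graph with four eigenvalues has diameter $3$, so it has non-adjacent pairs at distances $2$ and $3$, contradicting equality of the $\ell$-walk counts for odd $\ell$), hence $-k$ is not an eigenvalue and $k$ is simple, so $\theta_i/k\in(-1,1)$.

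Next I would run the case analysis. By Niven's theorem a rational $\theta_i/k\in(-1,1)$ must lie in $\{0,\pm\tfrac12\}$, and irrational rational-angle cosines of degree $2$ (resp.\ $3$) over $\mathbb{Q}$ form the finite set with $\varphi(\,\cdot\,)=4$ (resp.\ $\varphi(\,\cdot\,)=6$). Galois-stability of the triple leaves only three shapes: (a) all three lie in $\{0,\pm\tfrac12\}$, which being three distinct values already gives $(\theta_1,\theta_2,\theta_3)=(\tfrac k2,0,-\tfrac k2)$; (b) one rational value together with a conjugate quadratic pair, necessarily one of $\{\pm\tfrac1{\sqrt2}\}$, $\{\pm\tfrac{\sqrt3}{2}\}$, $\{\tfrac{1\pm\sqrt5}{4}\}$, $\{\tfrac{-1\pm\sqrt5}{4}\}$; (c) a single conjugate cubic triple, attached to $7,9,14$ or $18$. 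For each candidate in (b) and (c) I would derive a contradiction by combining $p(x)\in\mathbb{Z}[x]$ (which forces $k$ even and fixes its $2$-part), the feasibility constraints $\tr A=0$, $\tr A^2=nk$, $\tr A^3\ge0$ with nonnegative integer multiplicities, the walk-count identity $k^\ell=bk+cn+a$ with $c\ge1$, and the divisibility $p(x)\mid x^\ell-bx-a$ tested through the linear recurrence satisfied by $x^\ell\bmod p(x)$. For instance $\{\tfrac{1\pm\sqrt5}{4}\}$ dies because $\tr A=0$ forces a negative multiplicity; the cubic triple attached to $9$ dies because, writing $k=2m$, one gets $p(x)=x^3-3m^2x+m^3$ and then $k^\ell=bk+cn+a$ forces $c<m^3\le c$; the pairs $\{\pm\tfrac1{\sqrt2}\}$, $\{\pm\tfrac{\sqrt3}{2}\}$ die because collinearity of $(\theta_i,\theta_i^\ell)$ fails once $\ell\ge3$. (The sign-alternation property supplies a uniform shortcut: the conjugates of a degree $\ge2$ rational-angle cosine $\cos(\pi r)$ with $r$ of odd numerator all have odd ``$\tau$-numerators'', which is incompatible with the required sign pattern $+,-,+,-$.)

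The step I expect to be the main obstacle is the elimination in cases (b) and (c). Several of these candidates are sturdy: they pass periodicity outright, and some even pass the collinearity test (for the cubic triple attached to $9$ the value $\ell=3$ is already admissible), so one cannot dispatch them with a single clean inequality and must instead assemble the integrality of $p$, the feasibility of the multiplicities, the walk-count identity, and/or the eigenprojection sign pattern on a candidate-by-candidate basis --- all the more delicate because $p(x)\in\mathbb{Z}[x]$ only pins down $k$ modulo small factors and the root-of-unity denominators differ between candidates. Once this finite list is cleared the theorem follows, since case (a) is the only survivor.
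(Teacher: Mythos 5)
Your route is genuinely different from the paper's: you invoke the full strength of Theorem~\ref{pst}(C) (every eigenvalue in the support is $k\cos(j\pi/\tau)$, i.e.\ a rational-angle cosine), get full eigenvalue support cheaply from the walk-regularity of connected regular graphs with four eigenvalues (a legitimate shortcut past Lemma~\ref{0124-1}/Corollary~\ref{0404-1}, since the diagonal entries of each eigenprojection are then $m_i/n>0$ --- do state this), and then classify by the degree of $\theta_i/k$ via Niven's theorem, whereas the paper uses only the weaker condition $2\theta_i/k\in\Omega$ together with van Dam's trichotomy (Lemma~\ref{0109-1}) and computations in quadratic and cyclotomic integer rings. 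Your non-bipartiteness argument (diameter $3$ plus odd-$\ell$ walk counts) and your reduction to the three shapes (a),(b),(c) are correct, and case (a) is exactly the conclusion.

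However, the elimination of cases (b),(c) is where the proof actually lives, and as written it has a genuine hole: the cubic orbit attached to $7$. Your own tools (a full Galois orbit has equal multiplicities, $\tr A=0$ forces $m=2$, $n=7$, and $\tr A^2=nk$ forces $k=2$) reduce this candidate to $\G=C_7$, an actual graph that passes every constraint you can readily check; what remains is to show that the vDO collinearity condition~\eqref{0103-1} fails for the triple $\{\cos\tfrac{2\pi}{7},\cos\tfrac{4\pi}{7},\cos\tfrac{6\pi}{7}\}$ for \emph{every} odd $\ell\ge 5$ (equivalently, that $C_7$ is not strongly walk-regular), or else to show $C_7$ admits no perfect state transfer. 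The paper needs a dedicated, non-obvious lemma for exactly this (Lemma~\ref{0416-1}, proved by an inductive walk-counting argument); your sketch neither supplies an argument nor cites a substitute such as \cite[Theorem~5.1]{KY}, and the ``sign-alternation'' shortcut you propose to lean on is itself an unproven claim that I do not believe holds in this model, so it cannot carry the case. Separately, several of your specific attributions are wrong as stated, though repairable with your own toolkit: $\{\tfrac{1\pm\sqrt5}{4}\}$ paired with the rational eigenvalue $-\tfrac k2$ is \emph{not} killed by $\tr A=0$ (it only gives $\alpha=\beta+2$); one needs the second moment $\tr A^2=nk$, as in Lemma~\ref{0115-1}, where the paper derives $(k-3)(4\beta+3)=3$. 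Likewise $\{\pm\tfrac1{\sqrt2}\}$ and $\{\pm\tfrac{\sqrt3}{2}\}$ paired with the rational eigenvalue $0$ \emph{pass} the collinearity test (the line $y=\theta_1^{\ell-1}x$ works); those subcases die by the trace, not by collinearity. (Also, the $9$- and $18$-orbits die instantly from $\tr A=0$ since their orbit sums vanish, making your $p(x)=x^3-3m^2x+m^3$ computation unnecessary.) Until the $7$-orbit is disposed of and the case-by-case eliminations are actually carried out correctly, the argument is a plausible strategy rather than a proof.
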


This theorem imposes a strong constraint on the spectra of genuine strongly walk-regular graphs that admit perfect state transfer. 
In Section~\ref{S5},
we list all feasible graphs (up to $k = 20$) whose spectrum is of the form
\[ \left \{[k]^1, \left[ \frac{k}{2} \right]^{\alpha}, [0]^{\beta}, \left[-\frac{k}{2} \right]^{\gamma} \right\}. \]
Readers interested in the tables first may refer to Table~\ref{t1} and Table~\ref{t2}.

While our main focus is on perfect state transfer,
we also consider {\it periodicity} as a secondary topic.
We note that, in this paper,
periodicity does not refer to perfect state transfer between the same vertices.
The details will be discussed in Section~\ref{S4},
but since perfect state transfer and periodicity share a common necessary condition,
these phenomena can be analyzed simultaneously.

\section{Preliminaries}

See \cite{GR} for basic terminologies related to graphs.
Let $\G =(V, E)$ be a graph with the vertex set $V$ and edge set $E$.
Throughout this paper, we assume that graphs are simple and finite,
i.e., $|V| < \infty$ and $E \subset \{\{x,y\} \subset V \mid x \neq y\}$.
Define $\MC{A} = \MC{A}(\G)=\{ (x, y), (y, x) \mid \{x, y\} \in E \}$,
which is the set of the \emph{symmetric arcs} of $\G$.
The origin $x$ and terminus $y$ of $a=(x, y) \in \MC{A}$ are denoted by $o(a)$ and $t(a)$, respectively.
We write the inverse arc of $a$ as $a^{-1}$.

\subsection{Grover walks and related matrices}
We define several matrices on Grover walks.
Note that Grover walks are also referred to as arc-reversal walks or arc-reversal Grover walks,
and they are known as a special case of bipartite walks or two-reflection walks.
Let $\G = (V, E)$ be a graph, and set $\MC{A} = \MC{A}(\Gamma)$.
The \emph{boundary matrix} $d = d(\G) \in \MB{C}^{V \times \MC{A}}$ is defined by
$d_{x,a} = \frac{1}{\sqrt{\deg x}} \delta_{x, t(a)}$,
where $\delta_{a,b}$ is the Kronecker delta.
By directly calculating the entries based on the definition of matrix multiplication, it follows that
$dd^* = I$,
where $I$ is the identity matrix.
The \emph{shift matrix} $R = R(\G) \in \MB{C}^{\MC{A} \times \MC{A}}$
is defined by $R_{a, b} = \delta_{a,b^{-1}}$.
Clearly,
$R^2 = I$.
Define the \emph{time evolution matrix} $U = U(\G) \in \MB{C}^{\MC{A} \times \MC{A}}$
by $U = R(2d^*d-I)$.
The entries of $U$ are given by
\[ U_{a,b} = \frac{2}{\deg_{\G} t(b)} \delta_{o(a), t(b)} - \delta_{a,b^{-1}}, \]
as shown in \cite[Lemma~5.1]{KSeYa}.
The quantum walk defined by $U$ is called the \emph{Grover walk} on $\Gamma$.
The spectral analysis of $U$ can be reduced to the spectral analysis of the normalized adjacency matrix.
The \emph{adjacency matrix} $A=A(\G) \in \MB{C}^{V \times V}$ of $\G$ is defined by
\[ A_{x,y} = \begin{cases}
1 \qquad &\text{if $\{ x,y \} \in E$,} \\
0 \qquad &\text{otherwise.}
\end{cases} \]
The \emph{spectrum} of $\G$, denoted by $\Spec(\G)$, is the multiset of eigenvalues of $A(\G)$.
The spectrum of a graph satisfies some basic relations, as shown below.

\begin{lem}[\cite{BH}] \label{0127-1}
Let $\G = (V, E)$ be a graph with spectrum $\Spec(\G) = \{[\lambda_1]^{m_1}, \dots, [\lambda_s]^{m_s}\}$.
Then, the following holds:
\[ \sum_{i=1}^s m_i = |V|, \qquad
\sum_{i=1}^s m_i \lambda_i = 0, \qquad
\sum_{i=1}^s m_i \lambda_i^2 = 2|E|.
\]
\end{lem}

The \emph{discriminant} $P=P(\G) \in \MB{C}^{V \times V}$ is defined by $P = dRd^*$.
If a graph is regular, then the discriminant $P$ and the adjacency matrix are closely related:

\begin{lem}
Let $\G$ be a $k$-regular graph,
and let $A$ and $P$ be the adjacency matrix and the discriminant of $\G$, respectively.
Then we have $P = \frac{1}{k}A$.
\end{lem}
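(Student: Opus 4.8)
The plan is to unwind the definitions of the discriminant $P = dRd^*$ and the boundary matrix $d$, and compute the $(x,y)$-entry of $P$ directly. For a $k$-regular graph, $d_{x,a} = \frac{1}{\sqrt{k}}\delta_{x,t(a)}$, so I would write
\[
P_{x,y} = (dRd^*)_{x,y} = \sum_{a,b \in \MC{A}} d_{x,a} R_{a,b} \overline{d_{y,b}} = \frac{1}{k}\sum_{a,b \in \MC{A}} \delta_{x,t(a)}\,\delta_{a,b^{-1}}\,\delta_{y,t(b)}.
\]
The sum over $b$ collapses via $\delta_{a,b^{-1}}$ to $b = a^{-1}$, leaving $\frac{1}{k}\sum_{a \in \MC{A}} \delta_{x,t(a)}\delta_{y,t(a^{-1})}$. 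Since $t(a^{-1}) = o(a)$, this is $\frac{1}{k}\sum_{a\in\MC{A}}\delta_{x,t(a)}\delta_{y,o(a)}$, i.e. $\frac{1}{k}$ times the number of arcs $a$ with origin $y$ and terminus $x$.

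Next I would observe that there is exactly one such arc when $\{x,y\} \in E$ and none otherwise — this is immediate from the definition $\MC{A} = \{(x,y),(y,x) \mid \{x,y\}\in E\}$, since an arc is determined by its ordered pair of endpoints. Hence the count equals $A_{x,y}$, giving $P_{x,y} = \frac{1}{k}A_{x,y}$ for every pair $x,y$, which is the claim $P = \frac{1}{k}A$.

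I don't anticipate a genuine obstacle here; the statement is essentially a bookkeeping exercise in matrix-entry manipulation, and the only thing to be careful about is correctly tracking the Kronecker deltas through the two matrix products and using $t(a^{-1}) = o(a)$ together with the regularity assumption $\deg x = k$ for all $x$ to pull the constant $\frac{1}{k}$ out uniformly. (If one prefers a slicker route, one can instead note that $2d^*d - I$ appears in $U = R(2d^*d - I)$ and relate $P$ to $U$, but the direct entrywise computation above is the cleanest and most self-contained.)
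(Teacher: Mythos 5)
Your computation is correct, and it is exactly the ``straightforward verification'' that the paper alludes to: the paper itself writes out no proof, merely asserting the claim can be checked directly and citing a more general result in [KST]. Your entrywise unwinding of $P=dRd^*$, collapsing the sum via $\delta_{a,b^{-1}}$ and using $t(a^{-1})=o(a)$, supplies precisely the omitted details.
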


This can be verified by a direct calculation.
A more general claim and its proof can be found in Theorem~3.1 of \cite{KST}.

Since the discriminant $P$ is a normal matrix,
it has a spectral decomposition.
Let $\lambda_1, \dots, \lambda_s$ be the distinct eigenvalues of $P$,
and let $W_i$ be the eigenspace associated to $\lambda_i$.
Let $d_i = \dim W_i$,
and let $u^{(i,1)}, \dots, u^{(i,d_i)}$ be an orthogonal basis of $W_i$.
Then, the matrix
\[ E_i = \sum_{j=1}^{d_i} u^{(i,j)} (u^{(i,j)})^* \]
is called the \emph{eigenprojection} associated to the eigenvalue $\lambda_i$,
and it satisfies 
\[ P = \sum_{i=1}^{s} \lambda_i E_i, \qquad
E_i E_j = \delta_{i,j} E_i, \qquad
E_i^* = E_i, \qquad
\sum_{i=1}^s E_i = I.
\]
See~\cite{M} for the spectral decomposition of matrices.


\subsection{Perfect state transfer}
Let $\G$ be a graph,
and let $U = U(\G)$ be the time evolution matrix.
A vector $\Phi \in \MB{C}^{\MC{A}}$ is a \emph{state} if $\| \Phi \| = 1$.
For distinct states $\Phi$ and $\Psi$,
we say that \emph{perfect state transfer} occurs from $\Phi$ to $\Psi$ at time $\tau \in \MB{Z}_{\geq 1}$
if there exists $\gamma \in \MB{C}$ with norm one such that $U^{\tau}\Phi = \gamma \Psi$.
A state $\Phi$ is said to be \emph{vertex type}
if there exists a vertex $x \in V$ such that $\Phi = d^* e_{x}$,
where $e_{x} \in \MB{C}^{V}$ is the unit vector defined by $(e_{x})_z = \delta_{x,z}$.
See~\cite{KY} for a visual interpretation of vertex type states.
We study perfect state transfer between vertex type states.

\begin{defi}
    For a graph $\Gamma$, we say that $\Gamma$ \emph{admits perfect state transfer} if perfect state transfer occurs for $U(\Gamma)$ between two distinct vertex type states.
\end{defi}

We denote by $\sigma(M)$ the set of distinct eigenvalues of a normal matrix $M$.
For a vector $x$, we define $\Theta_M(x) = \{ \lambda_i \in \sigma(M) \mid E_i x \neq 0 \}$.
This is called the \emph{eigenvalue support} of the vector $x$ with respect to $M$.
Note that $\sigma(M)$ is simply a set,
so the multiplicities of eigenvalues are ignored.
The occurrence of perfect state transfer can be characterized in terms of eigenvalues and eigenprojections.

\begin{thm}[{\cite[Theorem~6.5]{KY}}] \label{pst}
Let $\G = (V, E)$ be a graph.
Let $x,y \in V$ be distinct vertices and $\tau \in \mathbb{Z}_{\geq 1}$.
For each eigenvalue $\lambda$ of $P=P(\G)$,
let $E_{\lambda}$ be the eigenprojection of $P$ associated to $\lambda$.
The following are equivalent.
\begin{enumerate}[\textup{(}A\textup{)}]
	\item Perfect state transfer occurs on $\G$ from $d^*e_x$ to $d^*e_y$ at time $\tau$.\label{pst:A}
	\item 
		$T_{\tau}(P)e_x = e_y$ holds,
        where $T_n(x)$ denotes the Chebyshev polynomial of the first kind of degree $n$.\label{pst:B}
	\item All of the following three conditions are
    satisfied:\label{pst:C}
	\begin{enumerate}[\textup{(}a\textup{)}]
		\item For any $\lambda \in \sigma(P)$, $E_{\lambda}e_x = \pm E_{\lambda}e_y$.\label{pst:C:a}
		\item For any $\lambda \in \Theta_{P}(e_x)$,\label{pst:C:b}
			if $E_{\lambda}e_x = E_{\lambda}e_y$,
			then $\lambda = \cos \frac{j}{\tau}\pi$ for some even $j$.
		\item For any $\lambda \in \Theta_{P}(e_x)$,\label{pst:C:c}
			if $E_{\lambda}e_x = - E_{\lambda}e_y$,
			then $\lambda = \cos \frac{j}{\tau}\pi$ for some odd $j$.
	\end{enumerate}
\end{enumerate}
\end{thm}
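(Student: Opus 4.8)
The plan is to route all three conditions through a single identity linking the powers of the time evolution matrix to Chebyshev polynomials of the discriminant, namely
\[ dU^{n}d^{*} = T_{n}(P) \qquad (n \geq 0). \]
To establish this I would first record the elementary relations $dS = d$ and $Sd^{*} = d^{*}$ for the coin $S = 2d^{*}d - I$, both immediate from $dd^{*} = I$, together with $R^{2} = I$ and $P = dRd^{*}$. These give the base cases $dU^{0}d^{*} = dd^{*} = I = T_{0}(P)$ and $dU^{1}d^{*} = dRd^{*} = P = T_{1}(P)$. Writing $U^{-1} = SR$ (a product of involutions) and computing $dU = 2Pd - dR$ and $dU^{-1} = dR$, I obtain the intertwining identity $d(U + U^{-1}) = 2Pd$, and hence the three-term recurrence
\[ dU^{n+1}d^{*} = 2P\, dU^{n}d^{*} - dU^{n-1}d^{*}, \]
which matches $T_{n+1} = 2xT_{n} - T_{n-1}$ and closes the induction. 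I would also note two facts for later: $U = RS$ is a product of real symmetric involutions, so $U$ is real and orthogonal (hence unitary and norm-preserving), and $P$ is the normalized adjacency matrix, so $\sigma(P) \subseteq [-1,1]$ and $|T_{\tau}(\lambda)| \leq 1$ on $\sigma(P)$.

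For (A)$\Leftrightarrow$(B) I would use the bridge directly. Assuming (A), I apply $d$ to $U^{\tau}d^{*}e_{x} = \gamma d^{*}e_{y}$ and use $dd^{*} = I$ to get $T_{\tau}(P)e_{x} = \gamma e_{y}$. Since $U$ is real and $e_{x}, e_{y}$ are real, the left side is real, forcing $\gamma \in \{\pm 1\}$; projecting onto the Perron eigenspace of $P$ (eigenvalue $1$, with $E_{1}e_{x} \neq 0$ a positive multiple of a strictly positive eigenvector) and using $T_{\tau}(1) = 1$ gives $E_{1}e_{x} = \gamma E_{1}e_{y}$, which pins $\gamma = 1$ and yields (B). Conversely, assuming (B), I compute $\langle U^{\tau}d^{*}e_{x}, d^{*}e_{y} \rangle = e_{y}^{*}(dU^{\tau}d^{*})e_{x} = e_{y}^{*}T_{\tau}(P)e_{x} = e_{y}^{*}e_{y} = 1$; as $U$ is unitary and $dd^{*} = I$, both $U^{\tau}d^{*}e_{x}$ and $d^{*}e_{y}$ are unit vectors, so the equality case of Cauchy--Schwarz forces $U^{\tau}d^{*}e_{x} = d^{*}e_{y}$, i.e. (A) with $\gamma = 1$.

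For (B)$\Leftrightarrow$(C) I would expand everything in the spectral decomposition $P = \sum_{\lambda}\lambda E_{\lambda}$, so that $T_{\tau}(P) = \sum_{\lambda}T_{\tau}(\lambda)E_{\lambda}$ while $e_{x} = \sum_{\lambda}E_{\lambda}e_{x}$ and $e_{y} = \sum_{\lambda}E_{\lambda}e_{y}$. Because the $E_{\lambda}$ project onto mutually orthogonal eigenspaces, (B) is equivalent to the componentwise system $T_{\tau}(\lambda)E_{\lambda}e_{x} = E_{\lambda}e_{y}$ for every $\lambda \in \sigma(P)$. The crucial step is a norm argument: from this system and $\|e_{x}\| = \|e_{y}\| = 1$ one gets
\[ \sum_{\lambda}\left( 1 - T_{\tau}(\lambda)^{2} \right)\|E_{\lambda}e_{x}\|^{2} = 0, \]
and since $\sigma(P) \subseteq [-1,1]$ forces each summand nonnegative, every $\lambda \in \Theta_{P}(e_{x})$ satisfies $T_{\tau}(\lambda) = \pm 1$. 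This yields (C)(a) at once, and writing $\lambda = \cos\theta$ with $T_{\tau}(\cos\theta) = \cos(\tau\theta)$, the value $+1$ (resp. $-1$) corresponds to $\lambda = \cos\frac{j}{\tau}\pi$ with $j$ even (resp. odd), which is precisely (C)(b) and (C)(c). The converse is the routine case-by-case check that (C) reconstitutes the componentwise system and hence (B).

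The main obstacle I anticipate is twofold. First, the three-term recurrence $dU^{n+1}d^{*} = 2P\,dU^{n}d^{*} - dU^{n-1}d^{*}$ rests on the intertwining identity $d(U + U^{-1}) = 2Pd$, whose careful derivation from $dS = d$, $Sd^{*} = d^{*}$ and $R^{2} = I$ is the genuine technical heart and must be done without sign or transpose slips. Second, the delicate point in both (A)$\Rightarrow$(B) and (B)$\Rightarrow$(C) is controlling the modulus-one scalar: the reality of $U$ restricts $\gamma$ to $\{\pm 1\}$, but upgrading this to $\gamma = 1$, and upgrading the bare equation $T_{\tau}(P)e_{x} = e_{y}$ to the rigid spectral conditions of (C), both hinge on the a priori bound $\sigma(P) \subseteq [-1,1]$ and on the Perron structure of the eigenvalue $1$. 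I would therefore secure those two spectral facts about the normalized adjacency matrix before attempting the sign-sensitive steps.
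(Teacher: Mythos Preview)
The paper does not contain a proof of this theorem: it is quoted verbatim from \cite[Theorem~6.5]{KY}, with only the supplementary remark that the phase $\gamma$ can be taken equal to $1$ by \cite[Remark~3.6]{GS}. So there is no ``paper's own proof'' to compare against.

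That said, your outline is the standard one and is essentially correct. The Chebyshev bridge $dU^{n}d^{*}=T_{n}(P)$ via the intertwiner $d(U+U^{-1})=2Pd$ is exactly the mechanism used in \cite{KY}, and your spectral/norm argument for (B)$\Leftrightarrow$(C) is the expected one. Two small points worth tightening: first, in the Perron step of (A)$\Rightarrow$(B) you implicitly use that $x$ and $y$ lie in the same connected component so that $E_{1}e_{x}$ and $E_{1}e_{y}$ are positive multiples of the \emph{same} Perron vector; this follows because $U$ respects the arc-partition by components and hence $U^{\tau}d^{*}e_{x}=\gamma d^{*}e_{y}$ already forces $y$ into the component of $x$. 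Second, note that the original statement in \cite{KY} carried the scalar $\gamma$, and the upgrade to $\gamma=1$ is precisely the content of the Guo--Schmeits remark the paper cites; your Perron argument is one clean way to obtain it.
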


We supplement the above theorem.
In~\cite[Theorem~6.5]{KY},
the constant $\gamma$  appears in the statement,
but it was later shown by Guo and Schmeits~\cite[Remark~3.6]{GS} that $\gamma = 1$.

\subsection{Algebraic integers}

In principle,
Theorem~\ref{pst} can be used to determine whether a graph admits perfect state transfer.
However, verifying condition~(C) in practice is somewhat cumbersome.
Our strategy, therefore, is to broadly narrow down the candidates using a necessary condition that is easy to apply.
This necessary condition focuses on algebraic integers.

A complex number $\alpha$ is said to be an \emph{algebraic integer}
if there exists a monic polynomial $p(x) \in \MB{Z}[x]$ such that $p(\alpha) = 0$.
Let $\Omega$ denote the set of algebraic integers.
The set $\Omega$ forms a ring.
In particular, it is closed under both addition and multiplication.
A specific example of an algebraic integer is $\zeta_m = e^{\frac{2\pi i}{m}}$ for a positive integer $m$,
since $(\zeta_m)^m = 1$.
Thus, for any integer $k$, $\zeta_m^k$ is an algebraic integer.
Moreover, for a rational number $2k/m$,
the quantity $2\cos \frac{2k}{m}\pi = \zeta_m^k + \zeta_m^{-k}$ is also an algebraic integer. 
The most basic fact about algebraic integers is that any algebraic integer which is also a rational number must be an integer.
A simple consequence of this fact is the following.

\begin{lem} \label{0403-1}
Let $\theta$ and $k$ be integers.
If $\frac{2\theta}{k} \in \Omega \cap [-2,2]$,
then $\theta \in \{ \pm k, \pm \frac{k}{2}, 0 \}$.
\end{lem}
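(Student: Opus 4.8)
The plan is to reduce everything to the elementary fact recalled in the paragraph immediately preceding the statement: a rational number that is an algebraic integer must be a rational integer. Since $\theta$ and $k$ are integers (with $k\neq 0$, as is implicit for $\frac{2\theta}{k}$ to be defined), the number $\frac{2\theta}{k}$ is rational; by hypothesis it also lies in $\Omega$, hence it is an integer, say $\frac{2\theta}{k}=n\in\MB{Z}$, so that $2\theta = nk$.

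Next I would bring in the interval constraint $\frac{2\theta}{k}\in[-2,2]$, which forces $n\in\{-2,-1,0,1,2\}$, and then simply enumerate the five cases. For $n=\pm 2$ we get $\theta=\pm k$; for $n=\pm 1$ we get $\theta=\pm\frac{k}{2}$; and for $n=0$ we get $\theta=0$. In every case $\theta\in\{\pm k,\pm\frac{k}{2},0\}$, as claimed.

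There is essentially no obstacle: the only point worth a remark is that when $n=\pm1$ the relation $2\theta=\pm k$ means $k$ is even, so $\frac{k}{2}$ is again an integer, but since the conclusion is merely stated as $\theta\in\{\pm k,\pm\frac{k}{2},0\}$ this is automatic and requires no extra work. The entire argument is a one-line consequence of the rational-integer property of $\Omega$.
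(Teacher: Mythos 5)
Your proof is correct and is essentially identical to the paper's: both reduce to the fact that a rational algebraic integer is an integer, then use the bound $-2 \leq \frac{2\theta}{k} \leq 2$ to force $\frac{2\theta}{k} \in \{0, \pm 1, \pm 2\}$ and enumerate. Your added remark about $k$ being even when $n = \pm 1$ is a harmless observation that the paper omits.
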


\begin{proof}
Since $\frac{2\theta}{k}$ is a rational algebraic integer,
it must be an integer.
Moreover, the condition $-2 \leq \frac{2\theta}{k} \leq 2$ implies that $\frac{2\theta}{k} \in \{ \pm 2, \pm 1, 0 \}$.
Hence, we have $\theta \in \{ \pm k, \pm \frac{k}{2}, 0 \}$.
\end{proof}

In addition, we use the following fundamental results.

\begin{lem}[{\cite[Proposition~2.34]{J}}] \label{0129-1}
Let $m > 1$ be a square-free integer.
Then,
\[ \Omega \cap \MB{Q}(\sqrt{m}) =
\begin{cases}
\{ p + q\sqrt{m} \mid p,q \in \MB{Z} \} &\quad \text{if $m \equiv 2,3 \pmod{4}$}, \\
\{ p + \frac{1+\sqrt{m}}{2}q \mid p,q \in \MB{Z} \} &\quad \text{if $m \equiv 1 \pmod{4}$}. \end{cases} \]
\end{lem}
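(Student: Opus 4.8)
The plan is to reduce the question to a simple congruence by invoking the standard description of algebraic integers in a quadratic field through their traces and norms. Write a general element of $\MB{Q}(\sqrt m)$ as $\alpha = a + b\sqrt m$ with $a, b \in \MB{Q}$. If $b = 0$ then $\alpha \in \MB{Q}$, and a rational algebraic integer is an ordinary integer, so that case is immediate; hence assume $b \neq 0$, so that $\alpha$ has degree $2$ over $\MB{Q}$ with monic minimal polynomial $x^2 - 2a\,x + (a^2 - b^2 m)$. Since the minimal polynomial of an algebraic integer has integer coefficients (Gauss's lemma), we get the clean equivalence: $\alpha \in \Omega$ if and only if $2a \in \MB{Z}$ and $a^2 - b^2 m \in \MB{Z}$. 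The whole argument then amounts to analysing these two conditions according to the parity of $2a$.

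First I would dispose of the case $a \in \MB{Z}$. Then $b^2 m \in \MB{Z}$, and writing $b = p/q$ in lowest terms forces $q^2 \mid p^2 m$, hence $q^2 \mid m$; square-freeness of $m$ then gives $q = 1$, so $b \in \MB{Z}$. Thus in this case $\alpha = p + q\sqrt m$ with $p, q \in \MB{Z}$, regardless of $m \bmod 4$. Next comes the case $2a$ odd, say $a = \tfrac{2c+1}{2}$. A similar divisibility argument — now $b = p/q$ satisfies $q^2 \mid 4m$, and for square-free $m$ the only square divisors of $4m$ are $1$ and $4$ (the prime $2$ needing a separate glance) — shows $q \in \{1,2\}$; the possibility $q = 1$ is excluded since it would make $a^2 = (a^2 - b^2 m) + b^2 m \in \MB{Z}$, contradicting that $a$ is a half-integer, so $b = \tfrac{p}{2}$ with $p$ odd. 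Substituting into $a^2 - b^2 m \in \MB{Z}$ and clearing denominators gives $(2c+1)^2 - p^2 m \equiv 0 \pmod 4$, and since an odd square is $\equiv 1 \pmod 4$, this is exactly $m \equiv 1 \pmod 4$.

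Putting these together: if $m \equiv 2,3 \pmod 4$ the half-integer case is vacuous, so $\Omega \cap \MB{Q}(\sqrt m) = \{p + q\sqrt m \mid p, q \in \MB{Z}\}$; if $m \equiv 1 \pmod 4$, the additional algebraic integers are precisely the elements $\tfrac{2c+1}{2} + \tfrac{p}{2}\sqrt m$ with $2c+1, p$ odd, and each of these is indeed an algebraic integer by running the same congruence in reverse. Finally, to cast the answer for $m \equiv 1 \pmod 4$ in the stated form, set $\omega = \tfrac{1+\sqrt m}{2}$ and split $\{p + q\omega \mid p, q \in \MB{Z}\}$ by the parity of $q$: for even $q$ one recovers $\{p + q\sqrt m \mid p,q \in \MB{Z}\}$, and for odd $q$ one recovers exactly the half-odd-integer combinations above, so the two descriptions coincide.

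None of these steps is deep. The only points demanding attention are the denominator bound for $b$ in the half-integer case, where square-freeness of $m$ must be used correctly for both odd and even $m$, and the opening reduction, which rests on the integrality of the minimal polynomial of an algebraic integer — the single external ingredient of the proof.
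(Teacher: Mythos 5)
Your argument is correct: it is the standard trace--norm proof of the classification of quadratic integer rings, and the paper itself offers no proof of this lemma, simply citing it as Proposition~2.34 of Jarvis, where essentially this same argument appears. The only steps that needed care --- the integrality of the minimal polynomial, the denominator bound $q \in \{1,2\}$ via square-freeness of $m$ (including even $m$), and the reverse implication that $\frac{1+\sqrt{m}}{2}$ is integral when $m \equiv 1 \pmod 4$ --- are all handled properly, so nothing further is required.
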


\begin{lem}[{\cite[Theorem~9.13]{J}}] \label{0206-2}
Let $p$ be a prime number.
Then,
\[ \Omega \cap \MB{Q}(\zeta_p)
= \{ c_0 + c_1 \zeta_p + \dots + c_{p-2} \zeta_p^{p-2} \mid c_0, c_1, \dots, c_{p-2} \in \MB{Z} \}. \]
\end{lem}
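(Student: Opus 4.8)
The statement asserts that the ring of integers of the cyclotomic field $K = \MB{Q}(\zeta_p)$ equals $\MB{Z}[\zeta_p]$, with $\{1, \zeta_p, \dots, \zeta_p^{p-2}\}$ serving as an integral basis. Write $\zeta = \zeta_p$ and $\mathcal{O}_K = \Omega \cap K$. The plan is to prove the two inclusions separately. The inclusion $\supseteq$ is immediate: $\zeta$ is a root of $x^p - 1 \in \MB{Z}[x]$, hence $\zeta \in \Omega$, and since $\Omega$ is a ring, every $\MB{Z}$-combination of powers of $\zeta$ lies in $\Omega \cap K$. For the reverse inclusion I would first record that the minimal polynomial of $\zeta$ is $\Phi_p(x) = 1 + x + \dots + x^{p-1}$, which is irreducible over $\MB{Q}$ by Eisenstein's criterion applied to $\Phi_p(x+1)$; consequently $[K:\MB{Q}] = p-1$ and $\{1, \zeta, \dots, \zeta^{p-2}\}$ is a $\MB{Q}$-basis, so every $\alpha \in \mathcal{O}_K$ can be written uniquely as $\alpha = \sum_{j=0}^{p-2} a_j \zeta^j$ with $a_j \in \MB{Q}$. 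The goal then becomes showing that each $a_j$ is in fact an integer.

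The engine of the argument is the element $\lambda = 1 - \zeta$ together with the field trace $\tr_{K/\MB{Q}}$. First I would compute the relevant traces: since the conjugates of $\zeta^i$ run over all primitive $p$-th roots of unity when $1 \le i \le p-1$, one has $\tr_{K/\MB{Q}}(\zeta^i) = -1$ for such $i$ and $\tr_{K/\MB{Q}}(1) = p-1$. Writing $\tr_{K/\MB{Q}}((1-\zeta)\zeta^i) = \tr_{K/\MB{Q}}(\zeta^i) - \tr_{K/\MB{Q}}(\zeta^{i+1})$ and inspecting the cases then yields $\tr_{K/\MB{Q}}((1-\zeta)\zeta^i) = 0$ for $1 \le i \le p-2$ and $\tr_{K/\MB{Q}}(1-\zeta) = p$. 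Applied to $\alpha = \sum_j a_j \zeta^j$, this collapses the trace to $\tr_{K/\MB{Q}}((1-\zeta)\alpha) = p\,a_0$.

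The key lemma I must establish is that $\tr_{K/\MB{Q}}((1-\zeta)\alpha) \in p\MB{Z}$ for every $\alpha \in \mathcal{O}_K$. To prove it I would show that the principal ideal $(\lambda)$ is stable under $\Gal(K/\MB{Q})$: for $1 \le k \le p-1$ the quotient $\frac{1-\zeta^k}{1-\zeta} = 1 + \zeta + \dots + \zeta^{k-1}$ and its inverse are both algebraic integers (the inverse because $\zeta$ is itself a power of $\zeta^k$), so each conjugate $1 - \zeta^k$ of $\lambda$ is a unit multiple of $\lambda$. Hence for any $\beta \in (\lambda)$ every Galois conjugate $\sigma(\beta)$ again lies in $(\lambda)$, whence $\tr_{K/\MB{Q}}(\beta) \in (\lambda) \cap \MB{Z}$; and because $N_{K/\MB{Q}}(\lambda) = \Phi_p(1) = p$ is not a unit, the ideal $(\lambda) \cap \MB{Z}$ is a nonzero ideal of $\MB{Z}$ containing $p$ but not $1$, so $(\lambda) \cap \MB{Z} = p\MB{Z}$. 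Taking $\beta = (1-\zeta)\alpha$ and combining with the previous paragraph gives $p\,a_0 \in p\MB{Z}$, i.e.\ $a_0 \in \MB{Z}$.

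Finally I would run an induction to peel off the remaining coefficients. Since $\zeta$ is a unit in $\mathcal{O}_K$ (as $\zeta \cdot \zeta^{p-1} = 1$), the element $\zeta^{-1}(\alpha - a_0) = \sum_{i=0}^{p-3} a_{i+1}\zeta^i$ again lies in $\mathcal{O}_K$ and has $a_1$ as its constant coefficient; applying the same trace identity shows $a_1 \in \MB{Z}$, and repeating drives all $a_j$ into $\MB{Z}$, establishing $\alpha \in \MB{Z}[\zeta]$. I expect the main obstacle to be the key lemma, specifically the two facts that the prime $\lambda$ lying above $p$ generates a Galois-stable ideal and that $(\lambda) \cap \MB{Z} = p\MB{Z}$; these are precisely the points where the primality of $p$ (equivalently, the total ramification of $p$ in $K$) enters, and handling them carefully is the crux of the whole argument.
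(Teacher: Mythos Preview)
The paper does not prove this lemma at all: it is simply quoted as a known textbook result (Jarvis, Theorem~9.13) and used as a black box later in the proof of Lemma~\ref{0109-2}. So there is no ``paper's own proof'' to compare against.

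That said, your argument is correct and is essentially the classical proof that $\mathcal{O}_{\MB{Q}(\zeta_p)} = \MB{Z}[\zeta_p]$ found in standard references. The trace computations, the fact that $(1-\zeta)\mathcal{O}_K$ is Galois-stable (via the unit quotients $(1-\zeta^k)/(1-\zeta)$), the identification $(1-\zeta)\mathcal{O}_K \cap \MB{Z} = p\MB{Z}$ through the norm $N_{K/\MB{Q}}(1-\zeta) = \Phi_p(1) = p$, and the inductive peeling-off of coefficients are all sound. One small point worth making explicit in a final write-up: when you assert $p \in (\lambda)$ you are implicitly using that the norm of $\lambda$ lies in the ideal $\lambda\mathcal{O}_K$ (since $N_{K/\MB{Q}}(\lambda) = \lambda \cdot \prod_{\sigma \ne \mathrm{id}} \sigma(\lambda)$ with each factor in $\mathcal{O}_K$); you gesture at this but it would be cleaner to state it directly.
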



The following are necessary conditions for perfect state transfer to occur.

\begin{lem}[{\cite[Lemma~9.1]{KY}}] \label{0128-5}
Let $\G$ be a graph with the discriminant $P$,
and let $x,y$ be distinct vertices of $\G$.
If perfect state transfer occurs from $d^*e_x$ to $d^*e_y$,
then $2\lambda$ is an algebraic integer for any $\lambda \in \Theta_{P}(e_x)$.
\end{lem}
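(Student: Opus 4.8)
The plan is to obtain the conclusion directly from the eigenvalue--eigenprojection characterization of perfect state transfer (Theorem~\ref{pst}), combined with the integrality of a rescaled Chebyshev polynomial. Assume perfect state transfer occurs from $d^*e_x$ to $d^*e_y$, say at time $\tau \in \MB{Z}_{\geq 1}$. First I would use the equivalence of conditions (A) and (B) in Theorem~\ref{pst} to get $T_{\tau}(P)e_x = e_y$. Fix $\lambda \in \Theta_P(e_x)$ and let $E_{\lambda}$ denote its eigenprojection; by the definition of the eigenvalue support we have $E_{\lambda}e_x \neq 0$. Since the spectral decomposition $P = \sum_i \lambda_i E_i$ yields $f(P)E_{\lambda} = f(\lambda)E_{\lambda}$ for every polynomial $f$, applying $E_{\lambda}$ to both sides of $T_{\tau}(P)e_x = e_y$ gives $T_{\tau}(\lambda)\,E_{\lambda}e_x = E_{\lambda}e_y$.

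Next I would invoke the equivalence of (A) and (C) in Theorem~\ref{pst}: since perfect state transfer occurs, condition (C)(a) holds, so $E_{\lambda}e_x = \pm E_{\lambda}e_y$ for every eigenvalue of $P$, in particular for the $\lambda$ fixed above. Substituting this into the identity $T_{\tau}(\lambda)E_{\lambda}e_x = E_{\lambda}e_y$ gives $\bigl(T_{\tau}(\lambda) \mp 1\bigr)E_{\lambda}e_x = 0$, and as $E_{\lambda}e_x \neq 0$ this forces $T_{\tau}(\lambda) \in \{1,-1\}$.

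It then remains to convert $T_{\tau}(\lambda) = \pm 1$ into the algebraic-integer statement. I would set $S_n(t) = 2T_n(t/2)$; the Chebyshev recurrence $T_{n+1}(y) = 2yT_n(y) - T_{n-1}(y)$ together with $S_0 = 2$ and $S_1(t) = t$ shows that $S_{n+1}(t) = t\,S_n(t) - S_{n-1}(t)$, so each $S_n$ is a monic polynomial of degree $n$ with integer coefficients. Hence one of $q(t) = S_{\tau}(t) - 2$ or $q(t) = S_{\tau}(t) + 2$ is monic in $\MB{Z}[t]$ and satisfies $q(2\lambda) = 2T_{\tau}(\lambda) \mp 2 = 0$, so $2\lambda \in \Omega$. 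The whole argument is short once Theorem~\ref{pst} is available; the points that need care --- and the closest thing to an obstacle --- are noting that condition (C)(a) is quantified over all eigenvalues of $P$ (so it genuinely applies to the chosen $\lambda$), and checking that the substitution $t \mapsto 2T_{\tau}(t/2)$ produces a polynomial with \emph{integer} (not merely rational) coefficients, which is exactly what upgrades $2\lambda$ from an algebraic number to an algebraic integer.
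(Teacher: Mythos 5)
Your argument is correct. Note first that the paper does not actually prove this lemma --- it imports it verbatim from \cite[Lemma~9.1]{KY} --- so the only meaningful comparison is with the route the paper's preliminaries suggest. Your derivation is a clean, self-contained one: (B) of Theorem~\ref{pst} gives $T_{\tau}(P)e_x=e_y$, hitting this with $E_{\lambda}$ gives $T_{\tau}(\lambda)E_{\lambda}e_x=E_{\lambda}e_y$, and (C)(a) together with $E_{\lambda}e_x\neq 0$ forces $T_{\tau}(\lambda)=\pm 1$; the rescaled Chebyshev polynomials $S_n(t)=2T_n(t/2)$, being monic in $\MB{Z}[t]$ by the recurrence $S_{n+1}=tS_n-S_{n-1}$ with $S_0=2$, $S_1=t$, then exhibit $2\lambda$ as a root of the monic integer polynomial $S_{\tau}(t)\mp 2$. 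The route the paper seems to have in mind is marginally shorter: conditions (C)(b) and (C)(c) already say that every $\lambda\in\Theta_P(e_x)$ equals $\cos\frac{j}{\tau}\pi$ for some integer $j$, whence $2\lambda=\zeta_{2\tau}^{j}+\zeta_{2\tau}^{-j}$ is a sum of roots of unity and hence an algebraic integer --- exactly the observation made in the paper's Section~2.3. The two arguments are essentially equivalent (your $T_{\tau}(\lambda)=\pm1$ is the same fact as $\lambda=\cos\frac{j}{\tau}\pi$ in disguise), but yours has the small advantage of not needing to know the explicit trigonometric form of the eigenvalues, only the integrality of the rescaled Chebyshev polynomials, which you verify correctly.
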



\section{Strongly regular graphs that admit perfect state transfer} \label{S3}

For a graph,
the \emph{neighborhood} of a vertex $x$ is the set of vertices adjacent to $x$, and is denoted by $N(x)$.
A graph $\G = (V, E)$ is \emph{strongly regular with parameters} $(n, k, \lambda, \mu)$ if it has $n$ vertices, is neither complete nor edgeless,
and satisfies the following condition for any two vertices $x$ and $y$:
\[ |N(x) \cap N(y)| = \begin{cases}
k \qquad &\text{if $x = y$,} \\
\lambda \qquad &\text{if $\{x,y\} \in E$,} \\
\mu \qquad &\text{if $\{x,y\} \not\in E$ and $x \neq y$.}
\end{cases} \]
It is well known that the equality
$k(k-\lambda-1) = (n-1-k)\mu$ holds among the parameters.
For more information on strongly regular graphs,
see \cite{BH, GR}.
The eigenvalues of strongly regular graphs are determined by their parameters.
The multiplicities are not so important in this study,
so we focus only on the eigenvalues,
which are described by the following theorem.

\begin{thm}[{\cite[Section~10]{GR}}] \label{0529-1}
Let $\G$ be a connected strongly regular graph with parameters $(n, k, \lambda, \mu)$.
Then, we have
$\sigma(\G) = \{ k, \theta_+, \theta_- \}$,
where
\[ \theta_{\pm} = \frac{\lambda - \mu \pm \sqrt{(\lambda - \mu)^2 + 4(k - \mu)}}{2}. \]
In particular, connected strongly regular graphs have three distinct  eigenvalues,
and satisfy
\begin{align}
\theta_+ + \theta_- &= \lambda - \mu, \label{1209-1} \\
\theta_+\theta_- &= \mu - k. \label{1209-2}
\end{align}
\end{thm}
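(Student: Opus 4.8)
The plan is to derive the spectrum from the single quadratic matrix identity that encodes strong regularity, and then to read off the eigenvalues by restricting to the subspace orthogonal to the all-ones vector $\mathbf{1}$.

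First I would record the defining counting relation in matrix form. Reading off the $(x,y)$-entry of $A^2$ as the number of length-two walks from $x$ to $y$, equivalently the number of common neighbours, and splitting into the cases $x=y$, $x\sim y$, and $x\not\sim y$ with $x\neq y$, the parameters give
\[ A^2 = kI + \lambda A + \mu(J-I-A), \]
where $J$ is the all-ones matrix. Rearranging yields the key equation
\[ A^2 - (\lambda-\mu)A - (k-\mu)I = \mu J. \]

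Next I would diagonalise. Since $\G$ is $k$-regular and connected, $\mathbf{1}$ is an eigenvector of $A$ with eigenvalue $k$, and by the Perron--Frobenius theorem this eigenvalue is simple and is the largest in absolute value. For any eigenvalue $\theta\neq k$ with eigenvector $v\perp\mathbf{1}$ we have $Jv=\mathbf{1}(\mathbf{1}^{\top}v)=0$, so applying the key equation to $v$ kills the right-hand side and leaves the scalar relation
\[ \theta^2 - (\lambda-\mu)\theta - (k-\mu) = 0. \]
Solving this quadratic produces the two candidate values $\theta_{\pm}$, while Vieta's formulas applied to the same quadratic give $\theta_++\theta_-=\lambda-\mu$ and $\theta_+\theta_-=-(k-\mu)=\mu-k$, which are precisely \eqref{1209-1} and \eqref{1209-2}.

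The step requiring the most care is showing that there are \emph{exactly} three distinct eigenvalues, that is, that both $\theta_+$ and $\theta_-$ actually occur, are distinct from each other, and differ from $k$. Here I would use a dichotomy: every eigenvalue other than $k$ is a root of the displayed quadratic, so $\G$ has at most three distinct eigenvalues; conversely, a connected graph that is not complete has diameter at least two and hence at least three distinct eigenvalues. Since a strongly regular graph is by definition not complete, both bounds are met, forcing $\theta_+\neq\theta_-$ (otherwise only two distinct values would survive) and giving each positive multiplicity; as $k$ is the simple Perron eigenvalue it coincides with neither. This establishes $\sigma(\G)=\{k,\theta_+,\theta_-\}$ and completes the argument.
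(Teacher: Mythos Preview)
Your argument is correct and is exactly the standard derivation found in the cited reference. Note that the paper itself does not prove this statement: Theorem~\ref{0529-1} is quoted from \cite[Section~10]{GR} without proof, so there is no in-paper argument to compare against. Your route via the identity $A^2-(\lambda-\mu)A-(k-\mu)I=\mu J$, restriction to $\mathbf{1}^{\perp}$, and the diameter bound for the number of distinct eigenvalues is precisely the textbook proof and needs no modification.
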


Note that a disconnected strongly regular graph is a disjoint union of complete graphs \cite[Lemma~10.1.1]{GR}, and therefore it has only two distinct eigenvalues.

A list of parameters for strongly regular graphs can be found on Brouwer's website~\cite{B}.
Most strongly regular graphs have integer eigenvalues,
but some also have irrational eigenvalues.
Such strongly regular graphs are subject to a strict restriction on their parameters, where the four different parameters can be expressed in terms of a single parameter, as follows.

\begin{thm}[{\cite[Theorem~9.1.3]{BH}}] \label{1205-2}
Let $\G$ be a strongly regular graph with parameters $(n, k, \lambda, \mu)$,
and let $D = (\lambda - \mu)^2 + 4(k - \mu)$.
If $\sqrt{D} \not\in \MB{Q}$,
then $(n, k, \lambda, \mu) = (4\mu+1, 2\mu, \mu-1, \mu)$.
\end{thm}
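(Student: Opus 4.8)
The plan is to play the integrality of the eigenvalue multiplicities against the two elementary trace identities of Lemma~\ref{0127-1}. First note that the hypothesis $\sqrt{D}\notin\MB{Q}$ forces $\G$ to be connected: a disconnected strongly regular graph is a disjoint union of complete graphs, hence has only integer eigenvalues. So $\G$ has exactly three distinct eigenvalues $k,\theta_+,\theta_-$ with multiplicities $1,m_+,m_-$. By Theorem~\ref{0529-1}, $\theta_+$ and $\theta_-$ are the roots of the monic quadratic $x^2-(\lambda-\mu)x+(\mu-k)\in\MB{Z}[x]$, and the irrationality of $\sqrt{D}$ is precisely the statement that this quadratic is irreducible over $\MB{Q}$; thus $\theta_+$ and $\theta_-$ are Galois conjugates over $\MB{Q}$, so they occur with the same multiplicity as roots of the integer polynomial $\chi_{A(\G)}$. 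Writing $m:=m_+=m_-$, we get $n=1+2m$.

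Next I would substitute this into Lemma~\ref{0127-1}. From $\sum_i m_i\lambda_i=0$ and $\theta_++\theta_-=\lambda-\mu$ (Theorem~\ref{0529-1}) we obtain $k+m(\lambda-\mu)=0$, so $\mu-\lambda=k/m$ is a positive integer; set $t:=\mu-\lambda\ge 1$, whence $k=mt$. From $\sum_i m_i\lambda_i^2=2|E|=nk$, together with $\theta_+^2+\theta_-^2=(\lambda-\mu)^2-2(\mu-k)=(\lambda-\mu)^2+2k-2\mu$, we get $k^2+m\bigl((\lambda-\mu)^2+2k-2\mu\bigr)=(2m+1)k$. Substituting $k=mt$ and $\lambda-\mu=-t$ and dividing through by $m$, this should collapse to
\[ 2\mu=(m+1)t^2-t. \]

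The final step uses the bound $\mu\le k$, which holds because $\G$ is not complete and hence has non-adjacent vertices $x\ne y$ with $\mu=|N(x)\cap N(y)|\le|N(x)|=k$. Combining $2\mu=(m+1)t^2-t$ with $2\mu\le 2k=2mt$ gives $(m+1)t\le 2m+1$; since $m\ge 1$, any $t\ge 2$ would force $(m+1)t\ge 2m+2$, a contradiction, so $t=1$. Putting $t=1$ back in yields $k=m$, $n=2m+1=2k+1$, and $2\mu=(m+1)-1=m=k$, so $k=2\mu$, $\lambda=\mu-1$, and $n=4\mu+1$---exactly $(n,k,\lambda,\mu)=(4\mu+1,2\mu,\mu-1,\mu)$.

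The step I expect to need the most care is the equal-multiplicity claim for the conjugate irrational eigenvalues (the \emph{conference graph} phenomenon); after that the argument is routine manipulation of the trace identities, and the clean finish hinges on the easy but easily overlooked inequality $\mu\le k$.
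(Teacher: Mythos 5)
Your proof is correct, and since the paper only cites this statement to Brouwer--Haemers without reproving it, there is no internal proof to compare against; your argument is essentially the standard one for the ``half case.'' The key step (equal multiplicities $m_+=m_-$ of the two irrational eigenvalues) is sound: they are roots of the irreducible integer quadratic $x^2-(\lambda-\mu)x+(\mu-k)$, hence Galois conjugates occurring with equal multiplicity in the characteristic polynomial, and the subsequent manipulation of the two trace identities together with $\mu\le k$ correctly forces $t=\mu-\lambda=1$ and the conference-graph parameters. (Brouwer--Haemers instead read off $m_\pm=\frac12\bigl((n-1)\mp\frac{2k+(n-1)(\lambda-\mu)}{\sqrt{D}}\bigr)$ and force the $\sqrt{D}$-term to vanish, which is the same fact in different clothing; your closing use of $\mu\le k$ in place of the parameter identity $k(k-\lambda-\mu)=(n-1-k)\mu$ is a pleasant small variation.) One point worth flagging: the paper's $D=(\lambda-\mu)^2-4(k-\mu)$ contains a sign error --- the discriminant of the quadratic satisfied by $\theta_\pm$ is $(\lambda-\mu)^2+4(k-\mu)$, as in \cite[Theorem~9.1.3]{BH} --- and your identification of ``$\sqrt{D}\notin\MB{Q}$'' with irreducibility of that quadratic silently uses the corrected $D$; with the paper's literal $D$ the equivalence (and your connectedness remark about disjoint unions of complete graphs) would not hold.
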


\begin{lem} \label{1225-2}
Let $\G = (V, E)$ be a $k$-regular graph that is not complete,
and let $P = P(\G)$.
Then, we have $k \in \Theta_{P}(e_x)$ and $|\Theta_{P}(e_x)| \geq 3$ for any vertex $x \in V(\G)$.
\end{lem}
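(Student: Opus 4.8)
The plan is to reduce the statement to the adjacency matrix and then to read off the combinatorial content of the vectors $e_x$, $Ae_x$, $A^2e_x$. Since $\G$ is $k$-regular, recall that $P=\frac1k A$, so $P$ and $A$ share the same eigenprojections and $|\Theta_P(e_x)|=|\Theta_A(e_x)|$ for every $x$. I take $\G$ to be connected throughout: this is the setting in which the lemma is applied, and connectedness is genuinely needed, since a disjoint union of two or more copies of $K_{k+1}$ is $k$-regular and not complete yet has only two distinct eigenvalues, forcing $|\Theta_P(e_x)|=2$. Granting connectedness, $k$ is a simple eigenvalue of $A$ with eigenprojection $\frac1n J$ by Perron--Frobenius, and $\frac1n J\,e_x=\frac1n\mathbf 1\neq 0$; this gives the first assertion $k\in\Theta_P(e_x)$.

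For the inequality, suppose toward a contradiction that $|\Theta_P(e_x)|\le 2$. From $e_x=\sum_{\lambda\in\Theta_P(e_x)}E_\lambda e_x$ we get that $e_x$, $Pe_x=\sum\lambda E_\lambda e_x$ and $P^2e_x=\sum\lambda^2 E_\lambda e_x$ all lie in the span of the at most two nonzero vectors $E_\lambda e_x$, hence are linearly dependent; since $P=\frac1k A$, the triple $e_x,Ae_x,A^2e_x$ is then linearly dependent as well. On the other hand $Ae_x$ is the indicator vector of $N(x)$, which is nonzero (as $k\ge 1$) and vanishes at the coordinate $x$, whereas $e_x$ does not; so $e_x$ and $Ae_x$ are linearly independent, and therefore $A^2e_x=a\,e_x+b\,Ae_x$ for some scalars $a,b$.

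It remains to extract a contradiction from this identity. Evaluating it at the coordinate $x$ gives $k=(A^2)_{x,x}=a$. Since $\G$ is connected, $k$-regular and not complete, we have $N[x]:=\{x\}\cup N(x)\neq V$, so we may choose $y\notin N[x]$; evaluating the identity at $y$ gives $|N(x)\cap N(y)|=(A^2)_{y,x}=0$. Thus no vertex outside $N[x]$ is adjacent to any vertex of $N(x)$, so $N[x]$ is a connected component of $\G$; as it has $k+1$ vertices, each of degree $k$, it induces $K_{k+1}$, and by connectedness $\G=K_{k+1}$, contradicting the assumption that $\G$ is not complete. The only real work is this last step --- turning the linear relation into the structural statement ``$N[x]$ is a clique component'' --- together with the observation that the hypothesis must be read as ``$\G$ connected and not complete''; the first assertion and the reduction to $A$ are immediate.
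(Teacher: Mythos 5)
Your proof is correct, and its second half takes a genuinely different route from the paper's. The first assertion and the reduction to $A$ are the same as in the paper: both use $P=\tfrac1k A$ and the fact that the eigenprojection associated to $k$ is $\tfrac1n J$, so that its image of $e_x$ is $\tfrac1n\mathbf 1\neq 0$. For the bound $|\Theta_P(e_x)|\geq 3$, the paper argues that if $\Theta_A(e_x)=\{k,\theta_1\}$ then $e_x-\tfrac1n\mathbf 1$ must be a $\theta_1$-eigenvector, i.e.\ $(A-\theta_1 I)e_x=\tfrac{k-\theta_1}{n}\mathbf 1$, and evaluating this identity at a non-neighbour of $x$ forces $\theta_1=k$, an immediate numerical contradiction. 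You instead deduce from $|\Theta_A(e_x)|\le 2$ that $e_x,Ae_x,A^2e_x$ are linearly dependent, hence $A^2e_x=k\,e_x+b\,Ae_x$, and read this off entrywise as a walk count: $|N(x)\cap N(y)|=0$ for every $y\notin N[x]$, so $N[x]$ is a clique component and $\G$ would be $K_{k+1}$. Your argument is more combinatorial and structural (it extracts the local structure around $x$ rather than an eigenvalue identity) at the cost of a few extra lines; the paper's is a shorter purely linear-algebraic computation, but both hinge on evaluating a low-degree relation at a vertex not adjacent to $x$. A further point in your favour: you are right that connectedness must be assumed --- the statement as printed fails for a disjoint union of two copies of $K_{k+1}$, and the paper's own proof tacitly uses connectedness when it writes the eigenprojection of $k$ as $\tfrac1n J$. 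Since the lemma is only ever applied to connected graphs (via Corollary~\ref{1205-3} in the strongly regular case and Corollary~\ref{0404-1} for genuine strongly walk-regular graphs), adding that hypothesis, as you do, is the appropriate reading.
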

\begin{proof}
Let $A$ be the adjacency matrix of $\G$.
Since $P = \frac{1}{k}A$,
we have $|\Theta_{A}(e_x)| = |\Theta_{P}(e_x)|$.
Let the distinct eigenvalues of $A$ be $\theta_0 = k, \theta_1, \dots, \theta_s$,
with the corresponding eigenprojections $E_0, E_1, \dots, E_s$.
First, since $E_0 = \frac{1}{n}J$,
it follows that $E_0 e_x = \frac{1}{n}\BM{1} \neq 0$.
Thus, we have $k \in \Theta_{A}(e_x)$.
Moreover, since $E_1e_x + \cdots + E_se_x = e_x - E_0e_x = e_x - \frac{1}{n}\BM{1} \neq 0$,
there exists some $j \neq 0$ such that $E_je_x \neq 0$.
Thus, we have $|\Theta_{A}(e_x)| \geq 2$.
To derive a contradiction,
we suppose that $|\Theta_{A}(e_x)| = 2$.
We may assume that $\Theta_{A}(e_x) = \{k, \theta_1\}$ without loss of generality.
We have
\begin{align*}
\Ker(A - \theta_1 I) &\ni E_1 e_x \\
&= (I - E_0 - E_2 - \cdots - E_s)e_x \\
&= e_x - \frac{1}{n}\BM{1}.
\end{align*}
This implies that the vector $e_x - \frac{1}{n}\BM{1}$ is an eigenvector of $A$ corresponding to the eigenvalue $\theta_1$.
We have $A(e_x - \frac{1}{n}\BM{1}) = \theta_1(e_x - \frac{1}{n}\BM{1})$, i.e.,
\begin{equation} \label{1205-1}
(A - \theta_1 I)e_x = \frac{k - \theta_1}{n} \BM{1}.
\end{equation}
Since $\G$ is not complete,
there exists a vertex $y$ that is not adjacent to $x$.
Examining the entry of Equality~\eqref{1205-1} corresponding to the vertex $y$ yields $k=\theta_1$, which is a contradiction.
\end{proof}

\begin{cor} \label{1205-3}
Let $\G = (V, E)$ be a strongly regular graph,
and let $P = P(\G)$.
Then, we have $\Theta_{P}(e_x) = \sigma(P)$
for any vertex $x \in V(\G)$.
\end{cor}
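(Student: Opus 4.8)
The goal is to show that for a strongly regular graph $\G$, every vertex $x$ satisfies $\Theta_P(e_x) = \sigma(P)$. Since a connected strongly regular graph has exactly three distinct eigenvalues (Theorem~\ref{0529-1}), $|\sigma(P)| = 3$, while a disconnected strongly regular graph is a disjoint union of complete graphs (Lemma~10.1.1 of \cite{GR}) and has only two distinct eigenvalues. Because $\Theta_P(e_x) \subseteq \sigma(P)$ always holds, it suffices to prove the reverse inclusion by a cardinality argument, splitting into the connected and disconnected cases.

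For the connected case I would invoke Lemma~\ref{1225-2} directly: a strongly regular graph is $k$-regular and, by definition, not complete, so the lemma gives $|\Theta_P(e_x)| \geq 3$. Since $|\sigma(P)| = 3$ and $\Theta_P(e_x) \subseteq \sigma(P)$, equality is forced. For the disconnected case, $\G$ is a disjoint union of $r \geq 2$ copies of $K_{k+1}$ (so that it is regular of degree $k$); then $P = \frac{1}{k}A$ has eigenvalues $1$ (i.e. $k/k$, from the all-ones vector on each component) and $-\frac{1}{k}$. Here $|\sigma(P)| = 2$, and one checks that $E_k e_x = \frac{1}{k+1}\mathbf{1}_{C}$ where $C$ is the component containing $x$, which is nonzero, while $e_x - E_k e_x \neq 0$ forces the other eigenvalue into the support as well, so $|\Theta_P(e_x)| = 2 = |\sigma(P)|$ and again equality holds.

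An alternative, uniform route avoids case analysis. A strongly regular graph is walk-regular, hence vertex-transitive in the weak sense that the diagonal entries $(E_i)_{xx}$ are independent of $x$; for each eigenprojection $E_i$ one has $(E_i)_{xx} = \frac{1}{n}\operatorname{tr} E_i = \frac{m_i}{n} > 0$, where $m_i = \dim W_i \geq 1$. Since $\|E_i e_x\|^2 = (E_i^2)_{xx} = (E_i)_{xx} > 0$, every $E_i e_x$ is nonzero, so $\Theta_P(e_x) = \sigma(P)$ immediately. I expect the main thing to be careful about is simply not overcomplicating the argument — the result is essentially a one-line consequence of walk-regularity (or of Lemma~\ref{1225-2} together with the three-eigenvalue fact), and the only genuine obstacle is remembering to dispose of the disconnected case, which is not connected and hence not literally covered by Theorem~\ref{0529-1}'s ``three distinct eigenvalues'' statement. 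I would likely present the proof via Lemma~\ref{1225-2} for the connected case and a short explicit computation for the disjoint-union-of-cliques case, since that keeps the argument within the machinery already set up in the excerpt.
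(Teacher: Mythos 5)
Your primary argument is exactly the paper's proof: the paper's version of this corollary is the two-line observation that $\Theta_{P}(e_x) \subset \sigma(P)$ together with Lemma~\ref{1225-2} forces $3 \leq |\Theta_{P}(e_x)| \leq |\sigma(P)| = 3$. The two additions you make are both sound and arguably improvements. First, the paper's proof silently writes $|\sigma(P)|=3$, which is only valid for connected strongly regular graphs (Theorem~\ref{0529-1}); since the corollary as stated does not assume connectedness, your explicit treatment of the disjoint-union-of-cliques case (where $\sigma(P)=\{1,-\frac{1}{k}\}$ and both eigenprojections visibly hit $e_x$) closes a small gap rather than overcomplicating things --- though in practice the paper only ever applies the corollary to connected graphs. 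Second, your walk-regularity route, $\|E_i e_x\|^2 = (E_i)_{xx} = \frac{m_i}{n} > 0$, is a genuinely different and more uniform argument: it needs neither the case split nor Lemma~\ref{1225-2}, and it generalizes verbatim to any walk-regular graph, whereas the paper's approach buys the sharper standalone fact (Lemma~\ref{1225-2}) that no vertex of a non-complete regular graph can have eigenvalue support of size two, which is reused later for strongly walk-regular graphs.
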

\begin{proof}
From the inclusion $\Theta_{P}(e_x) \subset \sigma(P)$ and Lemma~\ref{1225-2},
we have $3 \leq |\Theta_{P}(e_x)| \leq |\sigma(P)| = 3$.
Hence $\Theta_{P}(e_x) = \sigma(P)$.
\end{proof}

The following is a key lemma for characterizing perfect state transfer in strongly regular graphs.
As will be mentioned later, 
this lemma also provides an alternative proof of a known result on the periodicity of strongly regular graphs, originally established by Higuchi et al~\cite{HKSS2017}.


\begin{lem} \label{1128-1}
Let $\G$ be a connected strongly regular graph with parameters $(n, k, \lambda, \mu)$.
If both $\frac{2\theta_{+}}{k}$ and $\frac{2\theta_{-}}{k}$ are algebraic integers,
then the parameters must satisfy one of the following:
$(n, k, \lambda, \mu) = (2k, k, 0, k)$, $(3\lambda, 2\lambda, \lambda, 2\lambda)$, or $(5,2,0,1)$.
\end{lem}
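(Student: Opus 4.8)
The plan is to argue by cases on whether the discriminant $D = (\lambda-\mu)^2 - 4(k-\mu)$ is a perfect square, i.e.\ whether $\theta_\pm$ are integers or not, and in each case extract strong divisibility constraints from the hypothesis that $\frac{2\theta_+}{k}$ and $\frac{2\theta_-}{k}$ are algebraic integers.

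First I would treat the \emph{irrational} case: if $\sqrt{D} \notin \MB{Q}$, then by Theorem~\ref{1205-2} the parameters are $(n,k,\lambda,\mu) = (4\mu+1, 2\mu, \mu-1, \mu)$, so $k = 2\mu$ and, using \eqref{1209-1} and \eqref{1209-2}, $\theta_+ + \theta_- = -1$ and $\theta_+\theta_- = \mu - k = -\mu = -\frac{k}{2}$. Then $\frac{2\theta_+}{k}$ and $\frac{2\theta_-}{k}$ are the roots of $k^2 t^2 + 2k t - 2k = 0$ (equivalently $t^2 + \frac{2}{k}t - \frac{2}{k} = 0$), which has discriminant $\frac{4}{k^2}(1 + 2k)$; since $1+2k$ is never a perfect square when $k \geq 1$ is... actually more simply, $\frac{2\theta_+}{k} + \frac{2\theta_-}{k} = -\frac{2}{k}$ must be an algebraic integer, hence a rational integer, forcing $k \in \{1,2\}$; combined with $k = 2\mu$ this gives $k=2$, $\mu=1$, and the parameters $(5,2,0,1)$ — the Petersen-like $5$-cycle $C_5$. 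So the irrational case yields exactly the third family.

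Next, the \emph{rational} (integer eigenvalue) case: here $\theta_+ \geq 0 > \theta_-$ are integers, and the hypothesis says $\frac{2\theta_+}{k}, \frac{2\theta_-}{k} \in \Omega \cap \MB{Q}$, hence they are integers. Since $\G$ is connected strongly regular, $-k \leq \theta_- < 0 \leq \theta_+ < k$, so $\frac{2\theta_\pm}{k} \in [-2, 2) \cap \{$integers$\}$ — I would invoke Lemma~\ref{0403-1} directly, giving $\theta_\pm \in \{\pm k, \pm\frac{k}{2}, 0\}$. Now $\theta_+ \geq 0$ gives $\theta_+ \in \{0, \frac{k}{2}\}$ (not $k$, since $k$ is simple with a connected graph), and $\theta_- < 0$ gives $\theta_- \in \{-\frac{k}{2}, -k\}$. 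The case $\theta_+ = 0$ would force $A$ to have only eigenvalues $k$ and $\theta_-$, but a connected regular graph with two eigenvalues is complete, contradiction; so $\theta_+ = \frac{k}{2}$. This leaves two subcases: $(\theta_+, \theta_-) = (\frac{k}{2}, -\frac{k}{2})$ and $(\theta_+, \theta_-) = (\frac{k}{2}, -k)$. In the first, \eqref{1209-2} gives $\theta_+\theta_- = -\frac{k^2}{4} = \mu - k$, so $\mu = k - \frac{k^2}{4}$; for $\mu \geq 0$ we need $k \leq 4$; checking $k=2,4$ and using $k(k-\lambda-\mu) = (n-1-k)\mu$ together with $\lambda - \mu = \theta_+ + \theta_- = 0$ pins down $(n,k,\lambda,\mu) = (2k, k, 0, k)$ (the complete bipartite graph $K_{k,k}$; e.g.\ $k=2$ gives $C_4 = K_{2,2}$, $k=4$ gives $K_{4,4}$). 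In the second subcase, $\lambda - \mu = \theta_+ + \theta_- = -\frac{k}{2}$ and $\theta_+\theta_- = -\frac{k^2}{2} = \mu - k$, so $\mu = k - \frac{k^2}{2}$, $\lambda = \mu - \frac{k}{2} = \frac{k}{2} - \frac{k^2}{2}$; requiring $\lambda, \mu \geq 0$ forces $k \leq 1$, impossible for a strongly regular graph — \emph{unless} I made an arithmetic slip, in which case the correct relation $\theta_+\theta_- = \mu - k$ gives $\mu = k + \theta_+\theta_- = k - \frac{k^2}{2}$ and one sees directly that setting $\lambda = 2\lambda'$, $\mu = 2\lambda'$ is forced; rewriting, the surviving family is $(n,k,\lambda,\mu) = (3\lambda, 2\lambda, \lambda, 2\lambda)$ (the complete tripartite graph $K_{\lambda,\lambda,\lambda}$, since its eigenvalues are $2\lambda, 0, -\lambda$).

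The main obstacle I anticipate is the bookkeeping in the integer case: correctly enumerating which of the sign/value combinations for $(\theta_+, \theta_-)$ survive the non-negativity constraints $\lambda, \mu \geq 0$ and the feasibility relation $k(k-\lambda-\mu) = (n-1-k)\mu$, and matching each survivor to the clean parametric form claimed in the statement (rather than to a one-parameter list indexed by $k$). In particular one must double-check that $(\theta_+,\theta_-) = (\tfrac{k}{2}, -k)$ really does collapse to the $(3\lambda, 2\lambda, \lambda, 2\lambda)$ family — i.e.\ that $K_{m,m,m}$ has exactly these eigenvalues — and that no spurious solutions with $\theta_+ = 0$ or $\theta_- = -k$ slip through; the connectivity of $\G$ (ruling out eigenvalue multiplicities that would make it a disjoint union of complete graphs) is the key fact that closes those gaps.
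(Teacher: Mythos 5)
Your treatment of the irrational case is correct and essentially the same as the paper's: you sum the two conjugates to get $\frac{2(\theta_++\theta_-)}{k} = -\frac{2}{k} = -\frac{1}{\mu} \in \Omega\cap\MB{Q} = \MB{Z}$, forcing $\mu=1$ and the parameters $(5,2,0,1)$.

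The integer case, however, contains a fatal error. You discard $\theta_+ = 0$ on the grounds that it ``would force $A$ to have only eigenvalues $k$ and $\theta_-$.'' That is false: $\theta_+ = 0$ only means that the middle one of the three distinct eigenvalues $k > \theta_+ > \theta_-$ equals zero; the graph still has three distinct eigenvalues. In fact the two families you are trying to reach are precisely the $\theta_+ = 0$ cases: $K_{k,k}$ with parameters $(2k,k,0,k)$ has spectrum $\{k,0,-k\}$, so $(\theta_+,\theta_-)=(0,-k)$, and $K_{\lambda,\lambda,\lambda}$ with parameters $(3\lambda,2\lambda,\lambda,2\lambda)$ has spectrum $\{2\lambda,0,-\lambda\}$, so $(\theta_+,\theta_-)=(0,-\tfrac{k}{2})$. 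By eliminating $\theta_+=0$ you lose both target families and are then forced to manufacture them from $(\tfrac{k}{2},-\tfrac{k}{2})$ and $(\tfrac{k}{2},-k)$, which are in fact empty cases: in the first, Equality~\eqref{1209-2} gives $\mu = k(1-\tfrac{k}{4})>0$, hence $k=2$ and then $n=3$, i.e.\ $K_3$, which is complete; in the second, $\mu = k(1-\tfrac{k}{2})>0$ forces $k<2$. Your own computation in the second subcase already exposes this ($k\leq 1$), and the ``unless I made an arithmetic slip'' escape hatch is not a proof --- there is no slip, the case really is vacuous, and $K_{\lambda,\lambda,\lambda}$ does not have eigenvalues $(\tfrac{k}{2},-k)$. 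The paper's proof instead retains all the admissible sign patterns, rejects $(\tfrac{k}{2},-\tfrac{k}{2})$ and $(\tfrac{k}{2},-k)$ exactly as above, and reads off the two parameter families from $(0,-\tfrac{k}{2})$ and $(0,-k)$ via \eqref{1209-1}, \eqref{1209-2}, and $k(k-\lambda-\mu)=(n-1-k)\mu$. To repair your argument, delete the spurious elimination of $\theta_+=0$ and redo the case analysis on the four pairs with $\theta_+\in\{0,\tfrac{k}{2}\}$ and $\theta_-\in\{-\tfrac{k}{2},-k\}$.
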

\begin{proof}
Let $P$ be the discriminant of $\G$.
By Corollary~\ref{1205-3},
we have $\Theta_{P}(e_x) = \sigma(P) = \{1, \frac{\theta_+}{k}, \frac{\theta_-}{k}\}$.
Let $D = (\lambda - \mu)^2 + 4(k - \mu)$.

First, we consider the case where $\sqrt{D} \not\in \MB{Q}$.
By Theorem~\ref{1205-2},
we have $(n, k, \lambda, \mu) = (4\mu+1, 2\mu, \mu-1, \mu)$.
From our assumption, it follows that
\[ \Omega \ni \frac{2\theta_{\pm}}{k} = \frac{-1\pm\sqrt{4\mu+1}}{2\mu}. \]
Since the sum of algebraic integers is again an algebraic integer,
we obtain
\[ \frac{-1 + \sqrt{4\mu+1}}{2\mu} + \frac{-1 - \sqrt{4\mu+1}}{2\mu} = -\frac{1}{\mu} \in \Omega. \]
Recall that 
$\Omega \cap \MB{Q} = \MB{Z}$.
Hence, $-\frac{1}{\mu}$ must be an integer.
This implies $\mu = 1$,
which leads to $(n, k, \lambda, \mu) = (5,2,0,1)$.

Next, we consider the case where $\sqrt{D} \in \MB{Q}$.
Since the eigenvalues of the adjacency matrix are algebraic integers,
the eigenvalues $\theta_{\pm}$ must be integers.
Moreover, noting that $-k \leq \theta_{\pm} < k$,
we obtain from Lemma~\ref{0403-1} that $\theta_{\pm} \in \{-k, \pm \frac{k}{2}, 0\}$.
From $\theta_+ > \theta_-$,
there are exactly six possible patterns for the pair of numbers $\theta_{\pm}$, as shown below:
\[ (\theta_+, \theta_-) = \left( \frac{k}{2}, 0 \right), \left(\frac{k}{2}, -\frac{k}{2} \right), \left(\frac{k}{2}, -k \right), \left(0, -\frac{k}{2} \right), \left(0, -k \right), \left(-\frac{k}{2}, -k \right).\]
The first and last of these cases can be rejected immediately.
Indeed, the first case fails because the eigenvalues do not sum to zero.
The last case leads to $0 \geq \mu - k = \theta_+ \theta_- = \frac{k^2}{2} > 0$.

Case $(\theta_+, \theta_-) = (\frac{k}{2}, -\frac{k}{2}):$
In this case, we have
$\lambda - \mu = \theta_+ + \theta_-  = \frac{k}{2} + \left( - \frac{k}{2} \right) = 0$ by Equality~\eqref{1209-1}.
Thus,
\begin{equation} \label{1209-4}
\lambda = \mu.
\end{equation}
On the other hand,
$\mu - k = \theta_+ \theta_- = \frac{k}{2} \cdot \left( - \frac{k}{2} \right) = -\frac{k^2}{4}$ by Equality~\eqref{1209-2}.
Thus,
\begin{equation} \label{1209-3}
\mu = k \left( 1 - \frac{k}{4} \right).
\end{equation}
Since $\G$ is connected, the parameter $\mu$ is positive,
so Equality~\eqref{1209-3} leads to $k(1 - \frac{k}{4}) = \mu > 0$, i.e., $k < 4$.
However, assuming $k=3$,
the parameter $\mu$ cannot be an integer and must satisfy $k=2$.
Therefore, Equalities~\eqref{1209-4} and~\eqref{1209-3} lead to $\lambda = \mu = 1$.
Since $k(k-\lambda-1) = (n-1-k)\mu$, we have $n=3$.
However, this shows that the graph $\G$ is isomorphic to $K_3$,
which contradicts the assumption that $\G$ is strongly regular.

Case $(\theta_+, \theta_-) = (\frac{k}{2}, -k):$
Equality~\eqref{1209-2} implies that $\mu - k = \theta_+ \theta_- = - \frac{k^2}{2}$, and hence $\mu = k(1- \frac{k}{2})$.
Since $\G$ is connected, the parameter $\mu$ is positive.
Therefore, $k(1- \frac{k}{2}) = \mu > 0$, i.e., $k < 2$,
which is a contradiction.

Case $(\theta_+, \theta_-) = (0, -\frac{k}{2}):$
Equality~\eqref{1209-2} implies that $\mu - k = \theta_+ \theta_- = 0$, and hence
\begin{equation} \label{1209-5}
\mu = k.
\end{equation}
Equality~\eqref{1209-1} leads to $\lambda - \mu = \theta_+ + \theta_- = -\frac{k}{2}$.
From this and Equality~\eqref{1209-5}, we have $k = 2\lambda$.
Again from Equality~\eqref{1209-5}, we have $\mu = k = 2\lambda$.
Since $k(k-\lambda-1) = (n-1-k)\mu$, we have $n=3\lambda$.
Therefore, $(n, k, \lambda, \mu) = (3\lambda, 2\lambda, \lambda, 2\lambda)$.

Case $(\theta_+, \theta_-) = (0, -k):$
Equality~\eqref{1209-2} implies that $\mu - k = \theta_+ \theta_- = 0$, and hence
\begin{equation} \label{1209-6}
\mu = k.
\end{equation}
Equality~\eqref{1209-1} leads to $\lambda - \mu = \theta_+ + \theta_- = -k$.
From this and Equality~\eqref{1209-6},
we have $\lambda = \mu - k = 0$.
Since $k(k-\lambda-1) = (n-1-k)\mu$, we have $n=2k$.
Therefore, $(n, k, \lambda, \mu) = (2k, k, 0, k)$.
\end{proof}

A graph $\G$ is \emph{periodic} if its time evolution matrix satisfies $U^{\tau} = I$ for some positive integer $\tau$.
We note that, in this paper,
the notion of being periodic does not refer to perfect state transfer between the same vertices.
Higuchi et al.\ have shown that the parameters of periodic strongly regular graphs can only be $(2k, k, 0, k)$, $(3\lambda, 2\lambda, \lambda, 2\lambda)$, and $(5,2,0,1)$, using cyclotomic polynomials~\cite{HKSS2017}.
On the other hand, 
if a $k$-regular graph is periodic,
then $\frac{2\theta}{k} \in \Omega$ must hold for any eigenvalue $\theta$ of the adjacency matrix~\cite[Lemma~3.2]{K}.
Thus, Lemma~\ref{1128-1}, presented earlier,
shows that if a strongly regular graph is periodic,
then the parameters must be one of the three cases.
Conversely, it is straightforward to verify that strongly regular graphs with these parameters are periodic.
Hence, Lemma~\ref{1128-1} essentially provides an alternative proof for the classification of periodic strongly regular graphs.

In general, strongly regular graphs cannot be uniquely determined by their parameters alone.
However, each strongly regular graph with the three sets of parameters mentioned in Lemma~\ref{1128-1} is uniquely determined.
Specifically, the strongly regular graph with parameters $(2k, k, 0, k)$ is the complete bipartite graph $K_{k,k}$,
the one with parameters $(3\lambda, 2\lambda, \lambda, 2\lambda)$ is the complete tripartite graph $K_{\lambda, \lambda, \lambda}$,
and the one with parameters $(5,2,0,1)$ is the cycle graph $C_5$ of length $5$.

\begin{thm}
A connected strongly regular graph $\G$ admits perfect state transfer if and only if it is isomorphic to $K_{2,2}$ or $K_{2,2,2}$.
\end{thm}

\begin{proof}
We suppose that perfect state transfer occurs from $d^*e_x$ in $\G$.
By Lemma~\ref{0128-5},
we have $\frac{2\theta}{k} \in \Omega$ for any $\theta \in \Theta_{A}(e_x)$.
Thus, Lemma~\ref{1128-1} implies that the graph $\G$ is isomorphic to either $K_{m,m}$, $K_{m,m,m}$, or $C_5$.
Whether these graphs admit perfect state transfer has been investigated in previous studies,
which have shown that the odd cycle graph does not admit perfect state transfer~\cite[Theorem~5.1]{KY},
whereas complete multipartite graphs with equally sized partite sets that admit perfect state transfer are only $K_{2,2}$ and $K_{2,2,2}$~\cite[Section~4]{KS}.
\end{proof}

Although in this paper we determine which strongly regular graphs admit perfect state transfer using algebraic integers,
the statement of the result essentially appears in the work of Guo and Schmeits.
In~\cite{GS}, they introduce a concept called {\it peak state transfer} and determine which strongly regular graphs admit the phenomenon.
Peak state transfer is, roughly speaking, perfect state transfer without the condition of strong cospectrality, i.e., condition (C)(a) of Theorem~\ref{pst}.
According to their results,
if a connected strongly regular graph on $n$ vertices admits peak state transfer, then it is isomorphic to either the complete bipartite graph $K_{\frac{n}{2}, \frac{n}{2}}$ or the complete tripartite graph $K_{\frac{n}{3}, \frac{n}{3}, \frac{n}{3}}$~\cite[Theorem~5.3]{GS}.
Given this result,
our results follow immediately from~\cite[Section~4]{KS}.

\section{Strongly walk-regular graphs} \label{S4}

Let $\G$ be a graph with adjacency matrix $A$.
The property that $\G$ is a strongly regular graph can be interpreted as the condition that $A^2$ can be expressed as a linear combination of $I$, $A$, and $J$.
A generalization of the exponent $2$ of $A$ to an arbitrary integer $\ell \geq 2$ is the concept of strongly walk-regular graphs.
These graphs were introduced by Van Dam and Omidi~\cite{vDO}.
In this paper,
the parameters of strongly walk-regular graphs are not our main focus,
so we present a slightly modified definition.

Let $\ell \geq 2$ be a positive integer.
A graph $\G$ with adjacency matrix $A$ is said to be \emph{strongly $\ell$-walk-regular} if $A^{\ell}$ can be expressed as a linear combination of $I$, $A$, and $J$.
Moreover, a graph is said to be simply \emph{strongly walk regular} if it is $\ell$-strongly walk regular for some $\ell$.
Note that a graph can be both strongly $\ell$-walk-regular and strongly $m$-walk-regular for different positive integers $\ell$ and $m$.
Indeed, strongly regular graphs are not only strongly $2$-walk-regular but also strongly $\ell$-walk-regular for any $\ell \geq 2$.

Van Dam and Omidi~\cite{vDO} showed that strongly walk-regular graphs belong to one of the following classes:
empty graphs,
disjoint unions of complete graphs with the same order,
connected strongly regular graphs,
disjoint unions of complete bipartite graphs of the same size and isolated vertices,
or connected regular graphs with four distinct eigenvalues.
Following Zhang et al.~\cite{ZHF},
we refer to graphs in the last class (that is,
connected strongly walk-regular graphs with four distinct eigenvalues) as \emph{genuine strongly walk-regular graphs}.
Recently, Zhang et al.\ ~\cite{ZHF} classified genuine strongly $3$-walk-regular graphs with the least eigenvalue $-2$.
Note that if a graph is genuine strongly $\ell$-walk-regular,
then $\ell$ must be an odd number~\cite[Theorem~3.4]{vDO}.

Examples of strongly walk-regular graphs include the Hamming graph $H(3,3)$ and the complement of the Cartesian product of $K_{m,m}$ and $K_m$,
which is denoted by $\overline{K_{m,m} \sikaku K_m}$,
where $m$ is a positive integer.
More sophisticated constructions have also been established,
utilizing coset graphs of the duals of three-weight codes~\cite{CX, MS, SKKS},
or Cayley graphs derived from plateaued Boolean functions~\cite{RSS}.

In fact,
genuine strongly walk-regular graphs are characterized by their eigenvalues:

\begin{lem}[{\cite[Proposition~4.3]{vDO}}] \label{1231-1}
Let $\G$ be a 
connected $k$-regular graph with distinct eigenvalues $k$, $\theta_1$, $\theta_2$, $\theta_3$,
and let $\ell \geq 3$.
Then $\G$ is strongly $\ell$-walk-regular if and only if
\begin{equation} \label{0103-1}
(\theta_2 - \theta_3)\theta_1^{\ell}
+ (\theta_3 - \theta_1)\theta_2^{\ell}
+ (\theta_1 - \theta_2)\theta_3^{\ell} = 0.
\end{equation}
\end{lem}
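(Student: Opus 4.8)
The plan is to pass to the spectral decomposition of $A = A(\G)$ and thereby turn the defining matrix identity into a tiny linear system. Write $\theta_0 = k$, and let $E_0, E_1, E_2, E_3$ be the eigenprojections of $A$ associated to $k, \theta_1, \theta_2, \theta_3$, so that $A = \sum_{i=0}^{3}\theta_i E_i$ and $A^{\ell} = \sum_{i=0}^{3}\theta_i^{\ell}E_i$. Because $\G$ is connected and regular, the eigenvalue $k$ is simple with eigenvector $\BM{1}$, whence $E_0 = \frac{1}{n}J$ for $n = |V(\G)|$; thus $J = nE_0$, while $I = \sum_{i=0}^3 E_i$. Since the $E_i$ are pairwise orthogonal nonzero projections (so that $\sum_i c_i E_i = 0$ forces every $c_i = 0$ upon multiplying by $E_j$), they are linearly independent as matrices.

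Next I would rewrite the condition. By definition $\G$ is strongly $\ell$-walk-regular iff $A^{\ell} = aI + bA + cJ$ for some scalars $a,b,c$. Substituting the above and comparing coefficients of $E_0, E_1, E_2, E_3$, this is equivalent to
\[ a + bk + cn = k^{\ell}, \qquad a + b\theta_i = \theta_i^{\ell} \quad (i = 1, 2, 3). \]
The first equation only serves to determine $c$ (using $n \neq 0$) once $a, b$ are chosen, so it imposes no restriction. Hence $\G$ is strongly $\ell$-walk-regular if and only if the three linear equations $a + b\theta_i = \theta_i^{\ell}$ in the unknowns $a, b$ have a common solution.

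Finally, since $\theta_1, \theta_2, \theta_3$ are distinct, the coefficient matrix of that system has rank $2$, so the system is consistent precisely when the augmented matrix is singular:
\[ \det\begin{pmatrix} 1 & \theta_1 & \theta_1^{\ell} \\ 1 & \theta_2 & \theta_2^{\ell} \\ 1 & \theta_3 & \theta_3^{\ell} \end{pmatrix} = 0. \]
Expanding along the third column gives this determinant as $-\bigl[(\theta_2 - \theta_3)\theta_1^{\ell} + (\theta_3 - \theta_1)\theta_2^{\ell} + (\theta_1 - \theta_2)\theta_3^{\ell}\bigr]$, so its vanishing is exactly condition~\eqref{0103-1}, which completes the equivalence.

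The argument is essentially bookkeeping and works uniformly in $\ell$. The only points that need care are the use of connectedness to secure $E_0 = \frac{1}{n}J$ (so that $J$ is a scalar multiple of a single eigenprojection), the linear independence of the $E_i$ which legitimizes the coefficient comparison, and the observation that the $E_0$-equation is automatically solvable for $c$ and hence contributes no constraint. I do not anticipate any real obstacle here.
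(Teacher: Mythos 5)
Your proof is correct. The paper does not prove this lemma itself but simply cites van Dam and Omidi (Proposition~4.3 of \cite{vDO}), and your argument—spectral decomposition of $A$, reduction of $A^{\ell}=aI+bA+cJ$ to the linear system $a+b\theta_i=\theta_i^{\ell}$ (the $E_0$-equation being absorbed into the choice of $c$), and the consistency criterion via the vanishing of the $3\times 3$ determinant—is precisely the standard proof of that cited result, with all the needed points (connectedness giving $E_0=\frac1n J$, linear independence of the eigenprojections, distinctness of $\theta_1,\theta_2,\theta_3$ for the rank computation) handled correctly under the paper's definition of strongly $\ell$-walk-regular.
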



When $\ell = 3$,
factoring the left-hand side of Equality~\eqref{0103-1} results in
$-(\theta_1 + \theta_2 + \theta_3)(\theta_1 - \theta_2)(\theta_2 - \theta_3)(\theta_3 - \theta_1)$,
which leads to the following:

\begin{cor}[{\cite[Propostion~4.1]{vDO}}] \label{0109-6}
Let $\G$ be a 
connected regular graph with distinct eigenvalues $k, \theta_1, \theta_2, \theta_3$.
Then $\G$ is strongly $3$-walk-regular if and only if $\theta_1 + \theta_2 + \theta_3 = 0$.
\end{cor}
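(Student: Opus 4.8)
The plan is to reduce the statement directly to Lemma~\ref{1231-1} by specializing $\ell = 3$. First I would observe that $\ell = 3 \geq 3$, so Lemma~\ref{1231-1} applies verbatim to $\G$ and tells us that $\G$ is strongly $3$-walk-regular if and only if
\[ (\theta_2 - \theta_3)\theta_1^{3} + (\theta_3 - \theta_1)\theta_2^{3} + (\theta_1 - \theta_2)\theta_3^{3} = 0. \]
It then remains to show that, under the standing hypothesis that $k, \theta_1, \theta_2, \theta_3$ are pairwise distinct, this equation is equivalent to $\theta_1 + \theta_2 + \theta_3 = 0$.

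The key step is the polynomial identity
\[ (\theta_2 - \theta_3)\theta_1^{3} + (\theta_3 - \theta_1)\theta_2^{3} + (\theta_1 - \theta_2)\theta_3^{3} = -(\theta_1 + \theta_2 + \theta_3)(\theta_1 - \theta_2)(\theta_2 - \theta_3)(\theta_3 - \theta_1), \]
which is already quoted in the discussion preceding the corollary. I would justify it cleanly rather than by brute expansion: the left-hand side is homogeneous of degree $4$ and is an \emph{alternating} polynomial in $\theta_1, \theta_2, \theta_3$ (a direct check shows it changes sign under each transposition, e.g.\ under $\theta_1 \leftrightarrow \theta_2$), hence it is divisible by the Vandermonde determinant $(\theta_1 - \theta_2)(\theta_2 - \theta_3)(\theta_3 - \theta_1)$; the quotient is then a symmetric homogeneous polynomial of degree $1$, so it equals $c(\theta_1 + \theta_2 + \theta_3)$ for a constant $c$, and substituting one convenient numerical triple (say $(2,1,0)$) pins down $c = -1$.

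With the identity in hand I would finish as follows: since $\theta_1, \theta_2, \theta_3$ are distinct, each of $\theta_1 - \theta_2$, $\theta_2 - \theta_3$, $\theta_3 - \theta_1$ is nonzero, so the right-hand side of the identity vanishes if and only if $\theta_1 + \theta_2 + \theta_3 = 0$. Combining this with the equivalence supplied by Lemma~\ref{1231-1} yields both directions of the claim simultaneously.

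I do not expect a genuine obstacle here — the result is essentially a one-line consequence of Lemma~\ref{1231-1} together with an elementary factorization. The only points requiring care are (i) verifying the factorization in a way that is transparent rather than computational, and (ii) remembering to invoke the distinctness of the eigenvalues in order to discard the Vandermonde-type factors; without distinctness the equivalence would fail.
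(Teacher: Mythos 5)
Your proposal is correct and follows essentially the same route as the paper: specialize Lemma~\ref{1231-1} to $\ell=3$, factor the left-hand side of \eqref{0103-1} as $-(\theta_1+\theta_2+\theta_3)(\theta_1-\theta_2)(\theta_2-\theta_3)(\theta_3-\theta_1)$, and use the distinctness of the eigenvalues to cancel the Vandermonde factors. Your justification of the factorization via the alternating-polynomial argument is a tidy way to verify the identity the paper simply quotes.
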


\begin{cor} \label{0113-3}
Let $\G$ be a genuine strongly walk-regular graph
with distinct eigenvalues $k, \theta_1, \theta_2, \theta_3$.
Then, $\theta_2 = 0$ if and only if $\theta_1 + \theta_3 = 0$.
In this case, $\G$ is a strongly $\ell$-walk-regular graph for every odd $\ell$.
\end{cor}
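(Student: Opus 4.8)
The plan is to work directly with the polynomial identity of Lemma~\ref{1231-1}. Since $\G$ is genuine, it is connected and regular with four distinct eigenvalues and is strongly $\ell_0$-walk-regular for some integer $\ell_0 \geq 2$; by \cite[Theorem~3.4]{vDO} any such $\ell_0$ is odd, so in fact $\ell_0 \geq 3$ and $\ell_0 - 1$ is an even positive integer. Throughout I will use that $k, \theta_1, \theta_2, \theta_3$ are pairwise distinct: this keeps several of the eigenvalues away from $0$ and lets me discard spurious solutions of the form $|\theta_i| = |\theta_j|$ with $i \neq j$.

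For the implication $\theta_2 = 0 \Rightarrow \theta_1 + \theta_3 = 0$, I would substitute $\theta_2 = 0$ into Equality~\eqref{0103-1} with $\ell = \ell_0$. The middle summand vanishes, and factoring what remains gives $\theta_1 \theta_3 \bigl( \theta_3^{\ell_0 - 1} - \theta_1^{\ell_0 - 1} \bigr) = 0$. Since $\theta_2 = 0$ is distinct from both $\theta_1$ and $\theta_3$, neither of these is $0$, hence $\theta_3^{\ell_0 - 1} = \theta_1^{\ell_0 - 1}$; because $\ell_0 - 1$ is even this forces $|\theta_1| = |\theta_3|$, and $\theta_1 \neq \theta_3$ then yields $\theta_1 + \theta_3 = 0$.

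For the converse, assume $\theta_1 + \theta_3 = 0$, i.e.\ $\theta_3 = -\theta_1$, noting $\theta_1 \neq 0$ (otherwise $\theta_1 = \theta_3$). Substituting $\theta_3 = -\theta_1$ into Equality~\eqref{0103-1} with $\ell = \ell_0$ and using $(-\theta_1)^{\ell_0} = -\theta_1^{\ell_0}$, valid since $\ell_0$ is odd, collapses the identity to $2\theta_1 \theta_2 \bigl( \theta_1^{\ell_0 - 1} - \theta_2^{\ell_0 - 1} \bigr) = 0$. As $\theta_1 \neq 0$, either $\theta_2 = 0$ and we are done, or $\theta_1^{\ell_0-1} = \theta_2^{\ell_0-1}$, giving $|\theta_1| = |\theta_2|$; but then $\theta_2$ would equal $\theta_1$ or $-\theta_1 = \theta_3$, contradicting distinctness, so $\theta_2 = 0$. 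Finally, once $\theta_2 = 0$ and $\theta_3 = -\theta_1$ (equivalently $\theta_1 + \theta_2 + \theta_3 = 0$), I would evaluate the left-hand side of Equality~\eqref{0103-1} for an arbitrary odd $\ell \geq 3$: it becomes $\theta_1^{\ell+1} + 0 + \theta_1(-\theta_1)^{\ell} = \theta_1^{\ell+1} - \theta_1^{\ell+1} = 0$, so by the ``if'' direction of Lemma~\ref{1231-1} the graph $\G$ is strongly $\ell$-walk-regular for every odd $\ell \geq 3$; the case $\ell = 3$ also follows at once from Corollary~\ref{0109-6}.

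I do not expect a serious obstacle here. The one point requiring care is that being genuine supplies only a single witnessing exponent $\ell_0$, so the equivalence must first be extracted from that $\ell_0$ and only afterwards propagated to all odd $\ell$. The two features that make each factorization conclusive are the parity of the exponent --- used both to simplify $(-\theta_1)^{\ell_0}$ and to guarantee that $\ell_0 - 1$ is even --- and the pairwise distinctness of the eigenvalues, which is exactly what excludes the cases $|\theta_i| = |\theta_j|$.
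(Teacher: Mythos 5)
Your proposal is correct and follows exactly the route the paper intends: the paper's proof is just the one-line remark that the corollary follows from Lemma~\ref{1231-1}, and your argument is a careful expansion of that, with the factorizations, the use of the parity of $\ell_0$ and of $\ell_0-1$, and the appeal to distinctness of the eigenvalues all checking out. The final verification that the identity holds for every odd $\ell$ once $\theta_2=0$ and $\theta_3=-\theta_1$ correctly establishes the last sentence of the statement as well.
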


Using Lemma~\ref{1231-1},
we can verify that the cycle graph $C_6$,
which has the distinct spectrum $\sigma(C_6) = \{\pm 2, \pm 1\}$,
is not strongly walk-regular.
However,
applying Lemma~\ref{1231-1} becomes difficult
when a graph has irrational eigenvalues.
To facilitate later discussions,
we confirm that $C_7$ is not strongly walk-regular either.

\begin{lem} \label{0416-1}
The cycle graph $C_7$ of length $7$ is not strongly walk-regular.
\end{lem}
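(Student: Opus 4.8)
The plan is to combine the eigenvalue characterisation of strong walk-regularity (Lemma~\ref{1231-1}) with a short linear-recurrence argument. First I would record the spectrum: since $C_7$ is a circulant, $\sigma(C_7) = \{2,\ 2\cos\tfrac{2\pi}{7},\ 2\cos\tfrac{4\pi}{7},\ 2\cos\tfrac{6\pi}{7}\}$, and writing $\theta_1 = 2\cos\tfrac{2\pi}{7} > \theta_2 = 2\cos\tfrac{4\pi}{7} > \theta_3 = 2\cos\tfrac{6\pi}{7}$ we see that $C_7$ is a connected $2$-regular graph with exactly the four distinct eigenvalues $2 > \theta_1 > \theta_2 > \theta_3$. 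The case $\ell = 2$ is disposed of at once, since $C_7$ is not strongly regular — a connected strongly regular graph has only three distinct eigenvalues by Theorem~\ref{0529-1}, while $C_7$ has four — and hence, being connected and neither complete nor empty, not strongly $2$-walk-regular. (One also sees this directly: in $C_7$ two vertices at distance $2$ have exactly one common neighbour whereas two at distance $3$ have none, so $A^2$ takes two different values on non-adjacent pairs and cannot be a linear combination of $I$, $A$, $J$.) For $\ell \ge 3$ the hypotheses of Lemma~\ref{1231-1} are met, so it remains to prove that
\[ p_\ell := (\theta_2 - \theta_3)\theta_1^{\ell} + (\theta_3 - \theta_1)\theta_2^{\ell} + (\theta_1 - \theta_2)\theta_3^{\ell} \neq 0 \qquad \text{for every } \ell \ge 3. \]

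The second step is to pin down $\theta_1, \theta_2, \theta_3$ as the roots of a fixed integer cubic. Starting from $1 + \zeta + \zeta^2 + \dots + \zeta^6 = 0$ with $\zeta = e^{2\pi i/7}$, dividing by $\zeta^3$ and putting $y = \zeta^j + \zeta^{-j}$ shows that $\theta_1, \theta_2, \theta_3$ are precisely the three roots of $f(x) = x^3 + x^2 - 2x - 1$. Consequently $(p_m)_{m\ge 0}$, being a fixed linear combination of $\theta_1^m, \theta_2^m, \theta_3^m$, satisfies the recurrence
\[ p_m = -p_{m-1} + 2p_{m-2} + p_{m-3} \qquad (m \ge 3), \]
and a direct computation gives the seeds $p_0 = p_1 = 0$ and, by the standard Vandermonde evaluation, $p_2 = -(\theta_2 - \theta_1)(\theta_3 - \theta_1)(\theta_3 - \theta_2)$; this is strictly positive because $\theta_1 > \theta_2 > \theta_3$, and in fact $p_2^2$ equals the discriminant $49$ of $f$, so $p_2 = 7$. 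In particular every $p_m$ is an integer.

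The decisive step is to show $p_m \neq 0$ for all $m \ge 2$. Setting $s_m := (-1)^m p_m$ turns the recurrence into $s_m = s_{m-1} + 2s_{m-2} - s_{m-3}$, with $s_0 = s_1 = 0$, $s_2 = p_2 > 0$, $s_3 = s_2$ and $s_4 = 3s_2$. A strong induction then shows $s_m \ge s_{m-1} \ge s_2$ for all $m \ge 3$: the cases $m = 3, 4$ are the values just listed, and for $m \ge 5$ the induction hypothesis supplies $s_{m-3} \le s_{m-2}$, so $s_m - s_{m-1} = 2 s_{m-2} - s_{m-3} \ge s_{m-2} \ge s_2 > 0$. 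Hence $|p_m| = s_m \ge 7$ for every $m \ge 2$; in particular $p_\ell \neq 0$ for all $\ell \ge 3$, and together with the case $\ell = 2$ this shows that $C_7$ is not strongly $\ell$-walk-regular for any $\ell \ge 2$, i.e., not strongly walk-regular.

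I expect the last step — showing that the integer sequence $(p_m)$ never vanishes — to be the only real obstacle. A purely congruential approach fails: $7 \mid p_m$ for all $m$, and modulo other small numbers the reduced sequence does take the value $0$, so one must use the structure of the recurrence itself. The sign-and-growth induction works because the dominant root of $f$ (the root $\theta_3 \approx -1.80$, of largest modulus) is real — which forces $|p_m|$ to grow — and negative — which is why the substitution $s_m = (-1)^m p_m$ yields a recurrence whose solutions are manifestly positive and non-decreasing.
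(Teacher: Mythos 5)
Your proof is correct, but it goes by a genuinely different route than the paper's. The paper argues combinatorially: writing $w_j^{(\ell)}$ for the number of $\ell$-walks from a fixed vertex to the vertex at distance $j$, it shows by induction that the strict ordering of $(w_0^{(\ell)}, w_1^{(\ell)}, w_2^{(\ell)}, w_3^{(\ell)})$ reverses at each step, so that $w_2^{(\ell)} \neq w_3^{(\ell)}$ for every $\ell \geq 2$; since both target vertices are non-adjacent to the start, $A^{\ell}$ can never lie in the span of $I$, $A$, $J$. You instead invoke the spectral criterion of Lemma~\ref{1231-1}, identify $\theta_1,\theta_2,\theta_3$ as the roots of $x^3+x^2-2x-1$, and show that the integer sequence $p_m$ governed by the resulting linear recurrence never vanishes for $m \geq 2$ via the sign-flip substitution $s_m=(-1)^m p_m$ and a monotonicity induction; the case $\ell=2$ is correctly handled separately (Lemma~\ref{1231-1} requires $\ell \geq 3$, and your observation that $C_7$ has four distinct eigenvalues, hence is not strongly regular, disposes of it). All the computational ingredients check out: the seeds $p_0=p_1=0$, the Vandermonde evaluation $p_2 = -(\theta_2-\theta_1)(\theta_3-\theta_1)(\theta_3-\theta_2) = 7$ (only $p_2>0$ is actually needed), and the induction $s_m - s_{m-1} = 2s_{m-2}-s_{m-3} \geq s_{m-2} > 0$. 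What each approach buys: the paper's argument is elementary and self-contained, needing no information about the irrational eigenvalues; yours is more systematic and would transfer to other small graphs whose spectra are known via cyclotomy, at the cost of invoking the eigenvalue characterization and a nonvanishing argument for a linear recurrence (which, as you note, cannot be settled by congruences alone).
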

\begin{proof}
We denote the vertex set as $V(C_7) = \{ x_0, x_1, \dots, x_6 \}$
and the edge set as $E(C_7) = \{ \{x_0, x_1\}$, $\{x_1, x_2\}$, $\dots, \{x_5, x_6\}$, $\{x_6, x_0\} \}$.
Let $A = A(C_7)$,
and define $w_j^{(k)} = (A^{k})_{x_0, x_j}$ for $j \in \{0,1, \dots, 6\}$ and a positive integer $k$.
This is the number of walks of length $k$ from $x_0$ to $x_j$.
It is enough to show that $w_2^{(\ell)} \neq w_3^{(\ell)}$ for any $\ell \geq 2$.
By symmetry of the graph,
we have $w_4^{(k)} = w_3^{(k)}$, $w_5^{(k)} = w_2^{(k)}$,
and $w_6^{(k)} = w_1^{(k)}$.
Clearly,
\begin{align}
w_0^{(k+1)} &= 2w_1^{(k)}, \label{1223-a} \\
w_1^{(k+1)} &= w_0^{(k)} + w_2^{(k)}, \label{1223-b} \\
w_2^{(k+1)} &= w_1^{(k)} + w_3^{(k)}, \label{1223-c} \\
w_3^{(k+1)} &= w_2^{(k)} + w_3^{(k)}. \label{1223-d}
\end{align}
We want to show the following two statements:
for any positive integer $k$,
if $w_0^{(k)} > w_2^{(k)} > w_3^{(k)} > w_1^{(k)}$,
then $w_0^{(k+1)} < w_2^{(k+1)} < w_3^{(k+1)} < w_1^{(k+1)}$;
and for any positive integer $k$,
if $w_0^{(k)} < w_2^{(k)} < w_3^{(k)} < w_1^{(k)}$,
then $w_0^{(k+1)} > w_2^{(k+1)} > w_3^{(k+1)} > w_1^{(k+1)}$.
We start with the first statement.
Suppose that $w_0^{(k)} > w_2^{(k)} > w_3^{(k)} > w_1^{(k)}$ for a positive integer $k$.
From this assumption and Equalities~\eqref{1223-a}--\eqref{1223-d}, we have
\begin{align*}
w_1^{(k+1)} - w_3^{(k+1)} &= w_0^{(k)} + w_2^{(k)} - (w_2^{(k)} + w_3^{(k)}) = w_0^{(k)} - w_3^{(k)} > 0, \\
w_3^{(k+1)} - w_2^{(k+1)} &= w_2^{(k)} + w_3^{(k)} - (w_1^{(k)} + w_3^{(k)}) = w_2^{(k)} - w_1^{(k)} > 0, \\
w_2^{(k+1)} - w_0^{(k+1)} &= w_1^{(k)} + w_3^{(k)} - 2w_1^{(k)} = w_3^{(k)} - w_1^{(k)} > 0,
\end{align*}
which implies $w_0^{(k+1)} > w_2^{(k+1)} > w_3^{(k+1)} > w_1^{(k+1)}$.
The second statement can be proven in the same manner.
Furthermore, we observe that
\[ w_0^{(4)} = 6 > w_2^{(4)} = 4 > w_3^{(4)} = 1 > w_1^{(4)} = 0. \]
This implies that $w_2^{(\ell)} \neq w_3^{(\ell)}$ for any $\ell \geq 4$.
Finally, we verify that $w_2^{(3)} = 0 \neq 1 = w_3^{(3)}$ and $w_2^{(2)} = 1 \neq 0 = w_3^{(2)}$, which completes the proof.
\end{proof}

Since strongly walk-regular graphs have at most four distinct eigenvalues \cite[Theorem~3.4]{vDO},
if the cycle graph $C_n$ of length $n$ is strongly walk-regular,
then it must hold that $n \leq 7$.
From the previous discussion,
$C_6$ and $C_7$ are not strongly walk-regular,
whereas $C_4$ and $C_5$ are strongly regular,
and hence they are strongly walk-regular.
Additionally, $C_3$ is the complete graph,
so it is also strongly walk-regular.
Thus, we can now determine cycle graphs that are strongly walk-regular:

\begin{lem} \label{0101-3}
The cycle graph $C_n$ of length $n$ is strongly walk-regular if and only if $n \in \{3,4,5\}$.
In particular,
there is no genuine strongly walk-regular graph with valency $2$.
\end{lem}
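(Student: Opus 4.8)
The plan is to assemble the observations already made in the discussion preceding the lemma. I would begin by recording two elementary facts: a connected $2$-regular graph is precisely a cycle $C_n$ (so the two assertions of the lemma are tightly linked), and the eigenvalues of $C_n$ are $2\cos\frac{2\pi j}{n}$ for $j = 0, 1, \dots, n-1$, which gives exactly $\lfloor n/2 \rfloor + 1$ distinct values. In particular $C_3, C_4, C_5$ have $2, 3, 3$ distinct eigenvalues respectively, while $C_6$ and $C_7$ each have $4$.

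For the first assertion I would argue as follows. Since a strongly walk-regular graph has at most four distinct eigenvalues by \cite[Theorem~3.4]{vDO}, if $C_n$ is strongly walk-regular then $\lfloor n/2 \rfloor + 1 \leq 4$, that is, $n \leq 7$. The cases $n \in \{3,4,5\}$ are immediate: $C_3 = K_3$ is complete and $C_4, C_5$ are strongly regular, so all three are strongly walk-regular. It remains to rule out $n = 6$ and $n = 7$. The case $n = 7$ is exactly Lemma~\ref{0416-1}. For $n = 6$, the graph $C_6$ is connected, regular, and not complete nor a complete bipartite graph, and it has four distinct eigenvalues $\{2, 1, -1, -2\}$; hence, by the van Dam--Omidi classification, if it were strongly walk-regular it would be genuine, and therefore strongly $\ell$-walk-regular for some odd $\ell \geq 3$. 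Substituting $(\theta_1, \theta_2, \theta_3) = (1, -1, -2)$ into Equation~\eqref{0103-1} of Lemma~\ref{1231-1} gives $1 - 3(-1)^{\ell} + 2(-2)^{\ell}$, which equals $2^{\ell+1} - 2 > 0$ for even $\ell$ and $4 - 2^{\ell+1} < 0$ for odd $\ell \geq 3$, hence is never zero. This contradiction excludes $n = 6$ and completes the proof of the equivalence.

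For the \emph{in particular} statement, suppose $\G$ is a genuine strongly walk-regular graph of valency $2$. Being connected and $2$-regular, $\G \cong C_n$ for some $n \geq 3$; being genuine, it is strongly walk-regular and has four distinct eigenvalues. By the first assertion $n \in \{3,4,5\}$, but then $C_n$ has $\lfloor n/2 \rfloor + 1 \leq 3$ distinct eigenvalues, contradicting genuineness. Hence no genuine strongly walk-regular graph has valency $2$.

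The argument is essentially bookkeeping on top of results already established in the excerpt; I do not anticipate any real obstacle. The only point needing a (short) explicit computation is the exclusion of $C_6$ via Lemma~\ref{1231-1}, namely verifying that the left-hand side of \eqref{0103-1} does not vanish for the spectrum $\{2,1,-1,-2\}$, and a small bit of care is needed to note that $C_6$ is not strongly regular (it has four, not three, distinct eigenvalues) so that the $\ell=2$ case is covered as well.
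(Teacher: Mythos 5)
Your proposal is correct and follows essentially the same route as the paper: bound $n\leq 7$ via the four-eigenvalue bound, handle $C_3,C_4,C_5$ as complete/strongly regular graphs, exclude $C_7$ by Lemma~\ref{0416-1}, and exclude $C_6$ via the eigenvalue criterion of Lemma~\ref{1231-1}. Your explicit verification that $1-3(-1)^{\ell}+2(-2)^{\ell}\neq 0$ and your remark on the $\ell=2$ case merely spell out details the paper leaves to the reader.
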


As we saw in Section~\ref{S3},
if the eigenvalue support $\Theta_P(e_x)$ of $e_x$ coincides with the set of distinct eigenvalues $\sigma(P)$ of the discriminant $P$,
then periodicity and perfect state transfer share the common necessary condition that $2\theta$ must be an algebraic integer for any eigenvalue $\theta \in \sigma(P)$.
These logical relationships are briefly summarized in Figure~\ref{0126-1}.
Thus, by initially examining genuine strongly walk-regular graphs
that satisfy the condition
\begin{equation} \label{0314-1}
\text{$2\theta \in \Omega$ for any $\theta \in \sigma(P)$,}
\end{equation}
we can simultaneously address both periodicity and perfect state transfer.
First, we show that the eigenvalue support coincides with the set of distinct eigenvalues for genuine strongly walk-regular graphs, under a reasonable assumption.

\begin{figure}[h]
\centering
\begin{tikzpicture}
  \node (periodicity) at (0, 0) {Periodicity};
  \node (pst) at (6, 0) {Perfect state transfer};
  \node (lemma1) at (-2, -0.75) {\cite[Lemma~3.2]{K}};
  \node (arrow1) at (0, -0.75) {$\Downarrow$};
  \node (lemma2) at (7.5, -0.75) {Lemma~\ref{0128-5}};
  \node (arrow2) at (6, -0.75) {$\Downarrow$};
  \node (cond1) at (0, -1.5) {$2\theta \in \Omega$ for any $\theta \in \sigma(P)$};
  \node (iff) at (3, -1.5) {$\iff$};
  \node (cond2) at (6, -1.5) {$2\theta \in \Omega$ for any $\theta \in \Theta_P(e_x)$};
  \node (note) at (3, -2) {$\uparrow$};
  \node (note) at (3, -2.5) {if $\sigma(P) = \Theta_P(e_x)$};
\end{tikzpicture}
\caption{Logical relationships between periodicity and perfect state transfer} \label{0126-1}
\end{figure}
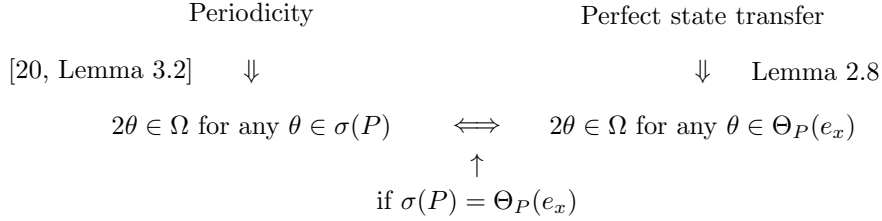

\begin{lem} \label{0124-1}
Let $\G = (V, E)$ be a genuine strongly walk-regular graph with distinct eigenvalues $k$, $\theta_1$, $\theta_2$, $\theta_3$,
and let $P = P(\G)$.
For any vertex $x \in V$,
if $\frac{\theta_1}{k}, \frac{\theta_2}{k} \in \Theta_{P}(e_x)$ and both $\frac{2\theta_1}{k}$ and $\frac{2\theta_2}{k}$ are algebraic integers,
then $\Theta_{P}(e_x) = \sigma(P)$.
\end{lem}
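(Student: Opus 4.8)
The plan is to establish the stronger fact that $\G$ itself is \emph{walk-regular}, i.e.\ that the diagonal entries of its eigenprojections are the same at every vertex; the equality $\Theta_P(e_x)=\sigma(P)$ then drops out at once. First I would reduce to a statement about diagonal entries. Since $\G$ is connected and $k$-regular, $P=\frac1k A$, so $\sigma(P)=\{1,\frac{\theta_1}{k},\frac{\theta_2}{k},\frac{\theta_3}{k}\}$ and the eigenprojections $E_0,E_1,E_2,E_3$ of $A$ (for $k,\theta_1,\theta_2,\theta_3$) are exactly those of $P$. By Lemma~\ref{1225-2} we already have $1\in\Theta_P(e_x)$ and $|\Theta_P(e_x)|\ge 3$, so only the case $\frac{\theta_3}{k}\notin\Theta_P(e_x)$ has to be ruled out. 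Because each $E_i$ is an orthogonal projection, $\|E_i e_x\|^2=e_x^{*}E_i e_x=(E_i)_{xx}$, so it suffices to prove $(E_i)_{xx}>0$ for all $i$ and all $x$.

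Next I would show that $(E_i)_{xx}$ does not depend on $x$. For any simple $k$-regular graph, $(A^0)_{xx}=1$, $(A^1)_{xx}=0$, and $(A^2)_{xx}=\deg x=k$; and since $\G$ is connected and regular, $E_0=\frac1n J$, so $(E_0)_{xx}=\frac1n$. Using $(A^m)_{xx}=\sum_{i=0}^3 (E_i)_{xx}\theta_i^m$ with $\theta_0=k$, the equations for $m=0,1,2$, after substituting $(E_0)_{xx}=\frac1n$, form a linear system in $(E_1)_{xx},(E_2)_{xx},(E_3)_{xx}$ whose coefficient matrix is the Vandermonde matrix of $\theta_1,\theta_2,\theta_3$, hence invertible since these are distinct. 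So $(E_1)_{xx},(E_2)_{xx},(E_3)_{xx}$ are determined by $n,k,\theta_1,\theta_2,\theta_3$ alone, and in particular they, together with $(E_0)_{xx}=\frac1n$, are independent of $x$. Summing over all vertices, $n\,(E_i)_{xx}=\tr E_i=\dim W_i\ge 1$, so $(E_i)_{xx}=\frac{\dim W_i}{n}>0$ for every $i$ and every $x$. Hence $E_i e_x\ne 0$ for all $i$, that is $\Theta_P(e_x)=\sigma(P)$.

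The one step that is not routine is the walk-regularity observation: it is exactly what goes beyond Lemma~\ref{1225-2} (which yields $|\Theta_P(e_x)|\ge 3$, not $=4$), and it rests on the facts that the low-order moments $(A^m)_{xx}$ with $m\le 2$ are forced and that three linear conditions pin down the three remaining diagonal entries. I note that the argument uses only that $\G$ is connected, regular, and has four distinct eigenvalues; the hypotheses $\frac{2\theta_1}{k},\frac{2\theta_2}{k}\in\Omega$ and $\frac{\theta_1}{k},\frac{\theta_2}{k}\in\Theta_P(e_x)$ are precisely what is available when the lemma is applied (via Lemma~\ref{0128-5}), but they are not actually needed here. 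If one wished instead to argue strictly within the algebraic-integer setting, the alternative route would be: assuming $\frac{\theta_3}{k}\notin\Theta_P(e_x)$ gives $(A-kI)(A-\theta_1 I)(A-\theta_2 I)e_x=0$; inspecting the $(y,x)$-entry for $y$ at distance $3$ from $x$ gives $(A^3)_{yx}=0$, which is impossible, so $\G$ has diameter $\le 2$, and then inspecting the $(y,x)$-entries for neighbours and non-neighbours of $x$ forces $\G$ to ``look strongly regular at $x$'' with local parameters built from $\theta_1,\theta_2$, whose required integrality (combined with $\frac{2\theta_i}{k}\in\Omega$) would be exploited. The moment computation above is, however, shorter and self-contained, so that is the route I would take.
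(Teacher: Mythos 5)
Your proof is correct, and it takes a genuinely different --- and stronger --- route than the paper's. The paper argues by contradiction: assuming one eigenvalue is missing from the support, it derives $A^2e_x=(\theta_1+\theta_2)Ae_x-\theta_1\theta_2e_x+\frac{(k-\theta_1)(k-\theta_2)}{n}{\bm 1}$, introduces the local walk counts $d_1,d_2$, and only then uses the hypotheses $\frac{2\theta_1}{k},\frac{2\theta_2}{k}\in\Omega$ to make the coefficients of the resulting quadratic integral, finishing with a lengthy case analysis. You instead prove the stronger statement that every connected regular graph with exactly four distinct eigenvalues has full eigenvalue support at every vertex: the moments $(A^m)_{xx}$ equal $1,0,k$ for $m=0,1,2$, connectivity gives $(E_0)_{xx}=\frac{1}{n}$, and the invertible Vandermonde system in $(E_1)_{xx},(E_2)_{xx},(E_3)_{xx}$ forces each $(E_i)_{xx}$ to equal the multiplicity of $\theta_i$ divided by $n$, which is positive, so $E_ie_x\neq 0$ by $\|E_ie_x\|^2=(E_i)_{xx}$. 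This is exactly van Dam's walk-regularity of such graphs (which the paper itself cites in Section~5) combined with positivity of the idempotents' diagonal. What your route buys: it is far shorter, it shows that the algebraic-integer and partial-support hypotheses of the lemma are superfluous, and it settles affirmatively, in the connected case, the question the paper poses immediately after Corollary~\ref{0404-1}; connectivity is where $E_0=\frac{1}{n}J$ enters and is genuinely needed, since a disconnected regular graph such as the disjoint union of the Petersen graph and $K_4$ has four distinct eigenvalues but vertices with smaller support. The opening reduction to ``only $\theta_3/k$ can be missing'' and the sketched algebraic-integer alternative at the end are dispensable, but they do no harm.
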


\begin{proof}
We may discuss adjacency matrices.
Let $A$ be the adjacency matrix of $\G$,
and let $E_i$ be the eigenprojection associated to the eigenvalue $\theta_i$ for $i \in \{0,1,2,3\}$,
where $\theta_0 = k$.
Let $n = |V|$.
Suppose, for contradiction, that $|\Theta_{A}(e_x)| = 3$.
Since $E_0 = \frac{1}{n}J$ and $E_3 e_x = 0$,
we have $e_x = (E_0 + E_1 + E_2 + E_3)e_x = \frac{1}{n}{\bm 1} + E_1 e_x + E_2 e_x$.
Thus,
\begin{align*}
e_x - \frac{1}{n} {\bm 1}
&= E_1 e_x + E_2 e_x \\
&\in \Ker(A-\theta_1I) + \Ker(A-\theta_2I) \\
&\subset \Ker(A-\theta_1I)(A-\theta_2I).
\end{align*}
Let $g(u) := (u-\theta_1)(u-\theta_2)$.
Then we have $0 = g(A)(e_x - \frac{1}{n} {\bm 1}) = g(A)e_x - \frac{g(k)}{n}{\bm 1}$, that is,
\begin{equation} \label{1231-2}
A^2 e_x = (\theta_1 + \theta_2)A e_x - \theta_1 \theta_2 e_x + \frac{(k-\theta_1)(k-\theta_2)}{n}{\bm 1}.
\end{equation}
Taking the $x$-entry of Equality~\eqref{1231-2} yields
\begin{equation} \label{1231-3}
k = - \theta_1 \theta_2 + \frac{(k-\theta_1)(k-\theta_2)}{n}.
\end{equation}
Let $y$ be a vertex adjacent to $x$.
Taking the $y$-entry of Equality~\eqref{1231-2} yields
$A^2_{y,x}= \theta_1 + \theta_2 + \frac{(k-\theta_1)(k-\theta_2)}{n}$.
The right-hand side does not depend on the choice of the vertex $y$,
provided that $y$ is adjacent to $x$.
Let $d_1$ denote this value, i.e,
\begin{equation} \label{1231-4}
d_1 = A^2_{y,x} = \theta_1 + \theta_2 + \frac{(k-\theta_1)(k-\theta_2)}{n}.
\end{equation}
Since $\G$ is not complete, there exists a vertex $z$ whose distance from vertex $x$ is $2$.
Taking the $z$-entry of Equality~\eqref{1231-2} yields
$A^2_{z,x} = \frac{(k-\theta_1)(k-\theta_2)}{n}$.
The right-hand side does not depend on the choice of the vertex $z$,
provided that the distance between $z$ and $x$ is $2$.
Let $d_2$ denote this value, i.e,
\begin{equation} \label{1231-5}
d_2 = A^2_{z,x} = \frac{(k-\theta_1)(k-\theta_2)}{n}.
\end{equation}
From Equalities~\eqref{1231-3}--\eqref{1231-5},
we have
\begin{align}
\theta_1 \theta_2 = d_2 - k, \label{0101-4} \\
\theta_1 + \theta_2 = d_1 - d_2. \label{0101-5}
\end{align}
Hence, $g(u) = u^2 - (\theta_1 + \theta_2)u + \theta_1 \theta_2 = u^2 - (d_1 - d_2)u + (d_2 - k)$.

We claim that
\begin{equation} \label{0101-2}
d_1 \leq k - 2.
\end{equation}
To show this inequality,
we count the number of walks of length $2$ starting from the vertex $x$ in two ways.
Let $W := \{ (x, y, z) \mid y,z \in V, x \sim y \sim z \}$.
For any element of $W$,
there are $k$ choices for the vertex $y$,
and for each choice of $y$ there are $k$ ways to choose the vertex $z$.
Thus, we have $|W| = k^2$.
Next, we fix the vertex $z$ and then count it.
\begin{align*}
|W| &= \sum_{z \in V}|\{ (x,y,z) \mid y \in V, x \sim y \sim z \}| \\
&= \sum_{z \in V} A^2_{x,z} \\
&= \sum_{\substack{z \in V \\ d(x,z) \leq 2}} A^2_{x,z} \\
&= \sum_{\substack{z \in V \\ d(x,z) = 0}} A^2_{x,z}
+ \sum_{\substack{z \in V \\ d(x,z) = 1}} A^2_{x,z}
+ \sum_{\substack{z \in V \\ d(x,z) = 2}} A^2_{x,z} \\
&= k + k d_1 + d_2 |N_2(x)|,
\end{align*}
where $N_2(x) = \{ z \in V \mid d(x, z) = 2\}$.
Hence, $k^2 = k + k d_1 + d_2 |N_2(x)|$. 
From this equality, we have $kd_1 = k^2 - k - d_2 |N_2(x)| < k^2 - k$.
This implies that $d_1 \leq k-2$.

Next, let
\[ f(u) := \left( \frac{2}{k} \right)^2 g \left( \frac{ku}{2} \right)
= u^2 - \frac{2(d_1 - d_2)}{k}u + \frac{4(d_2 - k)}{k^2}. \]
Since $g(\theta_1) = g(\theta_2) = 0$,
we have $f(\frac{2\theta_1}{k}) = f(\frac{2\theta_2}{k}) = 0$.
From this and $\frac{2\theta_1}{k}, \frac{2\theta_2}{k} \in [-2, 2)$,
it must hold that
\begin{equation} \label{0101-1}
f(-2) \geq 0.
\end{equation}
Furthermore, since $\frac{2\theta_1}{k}, \frac{2\theta_2}{k} \in \Omega$,
we have
\begin{align*}
\frac{2\theta_1}{k} + \frac{2\theta_2}{k}
&= \frac{2(d_1 - d_2)}{k} \in \Omega \cap \MB{Q} = \MB{Z}, \\
\frac{2\theta_1}{k} \cdot \frac{2\theta_2}{k}
&= \frac{4(d_2 - k)}{k^2} \in \Omega \cap \MB{Q} = \MB{Z}.
\end{align*}
In particular, $f(u) \in \MB{Z}[u]$.
By Inequality~\eqref{0101-1},
\begin{align*}
0 &\leq \left( \frac{k}{2} \right)^2 f(-2) \\
&= k^2 + k(d_1 - d_2) + d_2 - k \\
&\leq k^2 + k(k-2 - d_2) + d_2 - k \tag{by \eqref{0101-2}} \\
&< k^2 + k(k-2 - d_2) + d_2 \\
&= (2k-d_2)(k-1).
\end{align*}
Thus, we obtain $0 < \frac{4d_2}{k} < 8$.
Let $a$ be the constant term of $f(u) \in \MB{Z}[u]$,
that is, $a := \frac{4(d_2 - k)}{k^2}$.
Then we have $\frac{4d_2}{k} = ka + 4 \in \MB{Z} \cap (0,8) = \{1,2,3,4,5,6,7\}$,
and hence $a \in \{0, \pm \frac{1}{k}, \pm \frac{2}{k}, \pm \frac{3}{k} \}$.
Since $a \in \MB{Z}$,
the only possible cases are $a=0$, $(k,a) = (2, \pm 1)$, or $(k,a) = (3, \pm 1)$.
However, Lemma~\ref{0101-3} implies $k \neq 2$,
and the last case $(k,a) = (3, \pm 1)$ leads to a contradiction,
as $\MB{Z} \ni d_2 = \frac{k}{4}(ak+4) \in \{ \frac{3 \cdot 1}{4}, \frac{3 \cdot 7}{4} \}$.
Therefore, $a=0$ must hold.

Finally, we show that $a=0$ also cannot hold.
Since $a=0$, we have $d_2 = k$.
Thus, Equality~\eqref{0101-4} implies $\theta_1 \theta_2 = 0$.
We may assume that $\theta_1 = 0$ without loss of generality.
Then Equality~\eqref{0101-5} leads to $\theta_2 = d_1 - k$.
Moreover, since
\[ \Omega \ni \frac{2\theta_2}{k} = \frac{2(d_1 - k)}{k}
= \frac{2d_1}{k} - 2 \in \MB{Q}, \]
we have $\frac{2d_1}{k} \in \MB{Z}$.
By Inequality~\eqref{0101-2},
$0 \leq \frac{2d_1}{k} \leq \frac{2(k-2)}{k} = 2 - \frac{4}{k} < 2$,
and hence $\frac{2d_1}{k} \in \{0,1\}$.
If $\frac{2d_1}{k} = 0$,
then $d_1 = 0$, which implies $\theta_2 = -k$.
This indicates that $\G$ is bipartite,
and hence its spectrum is symmetric with respect to the origin.
However, since $\theta_1 = 0$,
the remaining eigenvalue must satisfy $\theta_3 = 0$,
which contradicts the assumption that $\G$ has four distinct eigenvalues.
Thus, we have $\frac{2d_1}{k} = 1$.
Then it holds that $d_1 = \frac{k}{2}$ and $\theta_2 = -\frac{k}{2}$.
Furthermore, Equality~\eqref{1231-3} leads to

\begin{equation} \label{0101-6}
n = \frac{3}{2}k.
\end{equation}
Assume that $\G$ is strongly $\ell$-walk-regular.
By Lemma~\ref{1231-1},
we have $\frac{k}{2} \theta_3^{\ell} + (-\frac{k}{2})^{\ell} \theta_3 = 0$.
Since $\theta_3 \not\in \{ 0, -\frac{k}{2}\}$ and $\ell$ is odd,
we obtain $\theta_3 = \frac{k}{2}$.
Now, we can express the spectrum of $\G$ as
$\Spec(\G) = \{ [k]^1, [\frac{k}{2}]^{\alpha}, [0]^{\beta}, [-\frac{k}{2}]^{\gamma} \}$.
From Lemma~\ref{0127-1},
we have
\begin{align*}
1 + \alpha + \beta + \gamma &= n, \\
k + \frac{k}{2} \cdot \alpha + \left( -\frac{k}{2} \right) \gamma &= 0, \\
k^2 + \left( \frac{k}{2} \right)^2 \alpha + \left( -\frac{k}{2} \right)^2 \gamma &= nk.
\end{align*}
Solving this for $\alpha, \beta, \gamma$ gives
\[ \alpha = \frac{2n}{k} - 3, \quad
\beta = n+3 - \frac{4n}{k}, \quad
\gamma = \frac{2n}{k} -1.
\]
In particular, since $\alpha \geq 1$,
we have $n \geq 2k$,
which contradicts~\eqref{0101-6}.
Therefore, $a=0$ is also impossible,
and hence $\Theta_{P}(e_x) = \sigma(P)$.
\end{proof}

\begin{cor} \label{0404-1}
Let $\G = (V, E)$ be a genuine strongly walk-regular graph,
and let $P = P(\G)$.
If $\G$ admits perfect state transfer,
then $\Theta_{P}(e_x) = \sigma(P)$ for any vertex $x \in V$.
\end{cor}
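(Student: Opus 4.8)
The plan is to obtain the corollary from Lemma~\ref{0124-1}, whose hypotheses we verify first at the vertex where perfect state transfer originates, and then, bootstrapping from the conclusion there, at an arbitrary vertex. So the first step is to record the structural facts: since $\G$ is genuine strongly walk-regular it is a connected $k$-regular graph with exactly four distinct eigenvalues; in particular it is not complete, and $\sigma(P) = \{1, \theta_1/k, \theta_2/k, \theta_3/k\}$ has four elements (recall $P = \tfrac1k A$). By hypothesis there are distinct vertices $a,b$ with perfect state transfer from $d^*e_a$ to $d^*e_b$, so Lemma~\ref{0128-5} gives $2\lambda \in \Omega$ for every $\lambda \in \Theta_P(e_a)$.

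Next I would apply Lemma~\ref{1225-2} to get $|\Theta_P(e_a)| \geq 3$; together with $|\sigma(P)| = 4$ this forces at least two of the non-principal ratios $\theta_i/k$ to lie in $\Theta_P(e_a)$. Relabelling the three non-principal eigenvalues so that $\theta_1/k,\theta_2/k \in \Theta_P(e_a)$, the numbers $\tfrac{2\theta_1}{k},\tfrac{2\theta_2}{k}$ are algebraic integers by the previous step, so Lemma~\ref{0124-1} applies at $a$ and yields $\Theta_P(e_a) = \sigma(P)$. The key point is that this upgrades the integrality statement: now \emph{every} ratio $\theta_i/k$ lies in $\Theta_P(e_a)$, so Lemma~\ref{0128-5} gives $\tfrac{2\theta_i}{k}\in\Omega$ for all $i\in\{1,2,3\}$ — a fact about the whole spectrum, no longer tied to a single vertex. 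Then for an arbitrary vertex $x$, Lemma~\ref{1225-2} again gives $|\Theta_P(e_x)| \geq 3$, so $\Theta_P(e_x)$ contains at least two of the $\theta_i/k$; after relabelling, say $\theta_1/k,\theta_2/k \in \Theta_P(e_x)$, and the corresponding doubled ratios are in $\Omega$ by what was just shown, so Lemma~\ref{0124-1} applies at $x$ and gives $\Theta_P(e_x) = \sigma(P)$.

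I do not expect a genuine obstacle: the argument is a short chain of the preceding lemmas. The one subtlety worth flagging is that Lemma~\ref{0128-5} only delivers algebraic-integrality of $2\lambda$ for $\lambda$ in the eigenvalue support of the \emph{source} vertex $a$, so one cannot invoke Lemma~\ref{0124-1} directly at a general vertex $x$; the remedy is precisely the two-stage structure above, where the first application at $a$ promotes the integrality condition to all four eigenvalues, after which the second application works at every $x$. A minor bookkeeping matter is that Lemma~\ref{0124-1} is phrased for a designated pair $\theta_1,\theta_2$, so at each stage one relabels the non-principal eigenvalues to match whichever two happen to lie in the relevant support.
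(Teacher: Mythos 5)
Your proof is correct and follows the same chain of lemmas (Lemma~\ref{1225-2}, Lemma~\ref{0128-5}, Lemma~\ref{0124-1}) as the paper's own argument. In fact your two-stage bootstrap --- first establishing $\Theta_P(e_a)=\sigma(P)$ at the source vertex $a$ so that $\tfrac{2\theta_i}{k}\in\Omega$ holds for all three non-principal eigenvalues before treating an arbitrary vertex $x$ --- is slightly more careful than the paper's version, which invokes Lemma~\ref{0128-5} at a generic vertex without remarking that the lemma only controls the eigenvalue support of the vertex from which the transfer originates.
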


\begin{proof}
Let the eigenvalues of $A(\G)$ be $k, \theta_1, \theta_2, \theta_3$.
By Lemma~\ref{1225-2}, we know that $|\Theta_{P}(e_x)| \geq 3$.
We may assume that $\frac{\theta_1}{k}, \frac{\theta_2}{k} \in \Theta_{P}(e_x)$ without loss of generality.
Since $\G$ admits perfect state transfer,
it follows from Lemma~\ref{0128-5} that $\frac{2\theta_1}{k}$ and $\frac{2\theta_2}{k}$ are algebraic integers.
Therefore, by Lemma~\ref{0124-1},
we conclude that $\Theta_{P}(e_x) = \sigma(P)$.
\end{proof}

In Lemma~\ref{0124-1},
we do not yet know whether the assumptions imposed on the graphs can be removed:

\begin{question}
Let $\G = (V, E)$ be a regular graph with distinct eigenvalues $k, \theta_1, \theta_2, \theta_3$,
and let $P = P(\G)$.
Does the equality $\Theta_{P}(e_x) = \sigma(P)$ hold for any vertex $x \in V$?
\end{question}

The following is a fundamental result by Van Dam on the eigenvalues of regular graphs with four distinct eigenvalues.

\begin{lem}[{\cite[Theorem~2.6]{vD}}] \label{0109-1}
Let $\G$ be a connected $k$-regular graph on $n$ vertices
with spectrum $\{ [k]^1, [\theta_1]^{m_1}, [\theta_2]^{m_2}, [\theta_3]^{m_3} \}$,
and let $m = \frac{n-1}{3}$.
Then one of the following holds:
  \begin{enumerate}[(i)]
  \item $m_1 = m_2 = m_3 = m$ and $k \in \{m, 2m\}$,
  \item $\G$ has precisely two distinct integral eigenvalues, or
  \item $\G$ has four distinct integral eigenvalues.
  \end{enumerate}
Moreover, if $\G$ has precisely two distinct integral eigenvalues,
then the other two have the same multiplicities and are of the form $\frac{1}{2}(a \pm \sqrt{b})$ with $a,b \in \MB{Z}$.
\end{lem}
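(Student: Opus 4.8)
The plan is to classify the possibilities according to how many of the non-principal eigenvalues $\theta_1,\theta_2,\theta_3$ are irrational, exploiting that the characteristic polynomial of $A(\G)$ lies in $\MB{Z}[x]$. Two standing facts drive everything. First, since $\G$ is connected and $k$-regular, $k$ is an integer eigenvalue of multiplicity exactly $1$ (Perron--Frobenius), with eigenprojection $\frac1n J$. Second, because $A$ has integer entries, each eigenvalue is an algebraic integer and the Galois group $\Gal(\overline{\MB{Q}}/\MB{Q})$ permutes the eigenvalues while fixing the characteristic polynomial; hence any irrational eigenvalue occurs together with all of its algebraic conjugates, and conjugate eigenvalues share the same multiplicity. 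Thus the irrational values among $\theta_1,\theta_2,\theta_3$ must form complete Galois orbits.

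I would then split into cases by the number $t$ of irrational values among $\theta_1,\theta_2,\theta_3$. If $t=0$, all four eigenvalues are integers, which is case (iii). The value $t=1$ is impossible: a lone irrational eigenvalue would need a distinct conjugate among the eigenvalues, but $k$ is rational and the remaining slots are filled by rationals, so the eigenvalue would itself be rational, a contradiction. If $t=2$, say $\theta_1,\theta_3$ are irrational, their minimal polynomial is an irreducible quadratic, so they are conjugate; since $\theta_1+\theta_3$ and $\theta_1\theta_3$ are rational algebraic integers, hence integers, we get $\theta_1,\theta_3=\frac12(a\pm\sqrt b)$ with $a,b\in\MB{Z}$, and equality of multiplicities follows from conjugacy. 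The two integral eigenvalues are then $k$ and $\theta_2$, so this is precisely case (ii) together with the ``moreover'' clause. If $t=3$, the same reasoning forbids a quadratic pair plus a leftover irrational (the leftover would need a conjugate with no slot available), so $\theta_1,\theta_2,\theta_3$ lie in a single cubic Galois orbit, whence $m_1=m_2=m_3=m$.

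It remains to upgrade this last case to case (i) by proving $k\in\{m,2m\}$. Substituting $m_1=m_2=m_3=m$ into Lemma~\ref{0127-1} and using $2|E|=nk$ gives $n=3m+1$, i.e.\ $m=\frac{n-1}3$, and, from $k+m(\theta_1+\theta_2+\theta_3)=0$, the identity $k=mp$ where $p:=-(\theta_1+\theta_2+\theta_3)\in\MB{Z}$. Thus $k\in\{m,2m\}$ is equivalent to the integer bound $p\in\{1,2\}$, and the entire difficulty is concentrated here.

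I expect this integer bound to be the main obstacle. The tool I would bring to bear is the polynomial identity $(A-\theta_1 I)(A-\theta_2 I)(A-\theta_3 I)=cJ$ with $c=\frac{(k-\theta_1)(k-\theta_2)(k-\theta_3)}{n}$; since the left-hand side has integer entries, $c$ is a non-negative integer. Expanding gives $A^3+pA^2+qA+rI=cJ$, where $q=\sum_{i<j}\theta_i\theta_j$ and $r=-\theta_1\theta_2\theta_3$ are integers as elementary symmetric functions of the conjugate triple. Reading off the $(x,y)$-entry for non-adjacent $x\neq y$ yields $(A^3)_{xy}+p(A^2)_{xy}=c$ with both walk counts non-negative integers, and an analogous relation involving $q$ for adjacent pairs. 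Coupling the entrywise non-negativity and integrality of these relations with the second-moment identity $m\sum_i\theta_i^2=k(n-k)$ and $k=mp$ should cut the admissible $p$ down to $\{1,2\}$. The delicate point is that the symmetric-function identities alone (first, second, and third moments) turn out to be automatically consistent and leave $p$ undetermined; pinning $p$ genuinely requires the entrywise structure of the cubic identity --- equivalently van Dam's finer analysis, for which it is convenient first to establish walk-regularity of $\G$ --- and this is the crux of the argument rather than the routine symmetric-function bookkeeping.
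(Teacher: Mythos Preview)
The paper does not prove this lemma; it is quoted verbatim from van Dam~\cite[Theorem~2.6]{vD} and used as a black box. So there is no in-paper proof to compare against.

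That said, your argument is sound up to the last step, and there the ``difficulty'' you flag is illusory. Once you have $k=mp$ with $p=-(\theta_1+\theta_2+\theta_3)\in\MB{Z}_{>0}$ and $n=3m+1$, the trivial bound $k\le n-1=3m$ for a simple graph already gives $p\le 3$. The value $p=3$ means $k=n-1$, i.e.\ $\G=K_n$, which has only two distinct eigenvalues, contradicting the hypothesis that $\G$ has four. Hence $p\in\{1,2\}$, and you are done. No Hoffman-polynomial identity, walk-regularity, or entrywise analysis is needed for this step; the symmetric-function bookkeeping you dismissed as ``routine'' is in fact the whole story once coupled with the degree bound. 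Your sketch of an approach via the cubic identity $A^3+pA^2+qA+rI=cJ$ would eventually also work, but it is a detour.
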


Therefore, we investigate (i), (ii), and (iii) in Lemma~\ref{0109-1} separately for perfect state transfer and periodicity of genuine strongly walk-regular graphs.
In the following sequence of statements,
we show that genuine strongly walk-regular graphs in cases~(i) and~(ii) do not satisfy the condition~\eqref{0314-1},
whereas graphs in case~(iii) that satisfy the condition~\eqref{0314-1} have their eigenvalues $(\theta_1, \theta_2, \theta_3) = (\frac{k}{2}, 0, -\frac{k}{2})$.

\begin{lem} \label{0109-2}
Let $\G$ be a genuine strongly walk-regular graph with spectrum $\{ [k]^1, [\theta_1]^{m_1}$, $[\theta_2]^{m_2}, [\theta_3]^{m_3} \}$.
Suppose that $\frac{2\theta_i}{k}$ is an algebraic integer for every $i \in \{1,2,3\}$.
Then, the multiplicities $m_1$, $m_2$, and $m_3$ cannot all be equal.
\end{lem}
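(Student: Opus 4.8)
The plan is to argue by contradiction. Suppose $m_1 = m_2 = m_3 =: m$; then $\G$ falls into case~(i) of Lemma~\ref{0109-1}, so that $n = 3m+1$ and $k \in \{m, 2m\}$. Reading off the two nontrivial identities of Lemma~\ref{0127-1} gives
\[
\theta_1 + \theta_2 + \theta_3 = -\frac{k}{m} \in \{-1,-2\},
\qquad
\theta_1^2 + \theta_2^2 + \theta_3^2 = \frac{k(3m+1-k)}{m},
\]
and these two constraints, together with the hypothesis that the $\tfrac{2\theta_i}{k}$ are algebraic integers, are what I would push to a contradiction. Lemma~\ref{0109-1} also tells us that either all of $k,\theta_1,\theta_2,\theta_3$ are integers, or exactly two of them are (the remaining two forming a Galois-conjugate pair $\tfrac12(a\pm\sqrt b)$ with $a,b\in\ZM$). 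I would split along this dichotomy.

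In the all-integer case, each $\theta_i$ $(i=1,2,3)$ is a rational algebraic integer with $|\theta_i|\le k$ and $\theta_i\ne k$, so Lemma~\ref{0403-1} forces $\theta_i \in \{-k,-\tfrac{k}{2},0,\tfrac{k}{2}\}$. There are only four $3$-element subsets of this set; for each one I would compute $\theta_1+\theta_2+\theta_3$ and match it against $-\tfrac{k}{m}\in\{-1,-2\}$. Two of the subsets give $0$ or a non-integer and are impossible, and the other two pin $k$ down to $k\in\{1,2,4\}$. But $k=1$ and $k=2$ do not occur for a genuine strongly walk-regular graph (the latter by Lemma~\ref{0101-3}, the former trivially), and $k=4$ is then ruled out by comparing the value of $\theta_1^2+\theta_2^2+\theta_3^2$ with $\tfrac{k(3m+1-k)}{m}$.

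In the mixed case I may assume $\theta_3\in\ZM$, hence $\theta_3\in\{-k,-\tfrac{k}{2},0,\tfrac{k}{2}\}$ by Lemma~\ref{0403-1}; write $\theta_3 = \tfrac{k}{2}t$ with $t\in\{-2,-1,0,1\}$. Since $\tfrac{2\theta_1}{k}$ and $\tfrac{2\theta_2}{k}$ are Galois conjugates of degree two which are algebraic integers, they are the two roots of a monic quadratic $x^2 - sx + p \in \ZM[x]$, and the spectral bound $|\theta_i|\le k$ puts both roots in $[-2,2]$, which bounds $s$ and $p$. Substituting $\theta_1+\theta_2 = \tfrac{k}{2}s$ and $\theta_1\theta_2 = \tfrac{k^2}{4}p$ into the sum identity yields $s+t = -\tfrac{2}{m}$, so $m\mid 2$ and $m\in\{1,2\}$; combined with $k\in\{m,2m\}$ and the exclusion of $k\in\{1,2\}$, the only surviving possibility is $(m,k)=(2,4)$ and $n=7$. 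But then the sum-of-squares identity reads $\tfrac{k^2}{4}(s^2-2p+t^2) = \tfrac{k(3m+1-k)}{m}$, that is, $4(s^2-2p+t^2) = 6$, which is impossible since the left-hand side is a multiple of $4$. This contradiction completes the argument.

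The step I expect to be most delicate is the mixed case: one must verify that the two irrational eigenvalues genuinely form a Galois-conjugate pair over $\MB{Q}$, so that normalizing by $\tfrac{2}{k}$ still produces a degree-two algebraic integer whose minimal polynomial has integer coefficients, and one must remember that cases~(i)--(iii) of Lemma~\ref{0109-1} are not mutually exclusive, so case~(i) may coexist with either the all-integer or the mixed situation. Once $k$ has been forced into $\{1,2,4\}$ (all-integer case) or $(m,k)=(2,4)$ (mixed case), everything reduces to a short numerical check.
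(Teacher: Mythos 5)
Your case analysis has a genuine gap. The dichotomy you extract from Lemma~\ref{0109-1} --- ``either all of $k,\theta_1,\theta_2,\theta_3$ are integers, or exactly two of them are, the other two forming a quadratic conjugate pair'' --- is not what that lemma says. It says that \emph{one} of (i), (ii), (iii) holds; being in case~(i) (equal multiplicities, $k\in\{m,2m\}$) does not put you in case~(ii) or~(iii) as well. The uncovered possibility is that $k$ is the \emph{only} integral eigenvalue and $\theta_1,\theta_2,\theta_3$ are the three conjugate roots of an irreducible cubic over $\MB{Q}$; equal multiplicities are then forced by Galois symmetry, which is precisely why case~(i) appears in van Dam's theorem as a separate alternative. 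This situation is not vacuous: it is realized by $\overline{C_7}\cong\Cay(\MB{Z}_7,\{\pm1,\pm2\})$ with $n=7$, $k=4$, $m=2$, whose nontrivial eigenvalues $\zeta_7^j+\zeta_7^{-j}+\zeta_7^{2j}+\zeta_7^{-2j}$ are conjugate cubic irrationals. The paper's proof spends most of its effort on exactly this configuration: after reducing to $n=7$, $k=4$, it quotes the van Dam--Spence computer classification to identify $\overline{C_7}$ as the unique candidate and then shows that $\tfrac{2\lambda}{k}=\tfrac12(-1-\zeta_7^3-\zeta_7^4)$ is not an algebraic integer via the description of $\Omega\cap\MB{Q}(\zeta_7)$ (Lemma~\ref{0206-2}). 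Neither of your two cases touches this graph.

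The parts you do treat are sound, and your route differs from the paper's in a worthwhile way: the paper never uses $k\in\{m,2m\}$, instead bounding $\tfrac{2}{k}(\theta_1+\theta_2+\theta_3)$ directly into $\{-1,-2,-3\}$, whereas you lean on that containment together with the two trace identities. To close the gap along your own lines, note that in the cubic case your identities still force $m\in\{1,2\}$, hence $n=7$, $k=4$, $\theta_1+\theta_2+\theta_3=-2$ and $\theta_1^2+\theta_2^2+\theta_3^2=6$. Writing the (integral, monic, irreducible) minimal polynomial of the $\theta_i$ as $x^3+2x^2+px-q$, the requirement that each $\theta_i/2$ be an algebraic integer forces $4\mid p$, while $\theta_1^2+\theta_2^2+\theta_3^2=4-2p=6$ gives $p=-1$, a contradiction. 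Alternatively, follow the paper and invoke the van Dam--Spence classification together with Lemma~\ref{0206-2}.
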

\begin{proof}
Suppose, for contradiction, that $m_1 = m_2 = m_3 =: m$.
We may assume that $\theta_1 > \theta_2 > \theta_3$ without loss of generality.
The eigenvalues $\lambda$ of $\G$ excluding $k$ satisfy $-k \leq \lambda < k$.
Moreover, 
since $\G$ is not complete,
applying interlacing for a $2$-coclique yields $\theta_1 \geq 0$.
In addition, since the sum of the eigenvalues is zero,
we have
\begin{equation} \label{0103-2}
k + m(\theta_1 + \theta_2 + \theta_3) = 0.
\end{equation}
In particular, we have $\theta_3 < 0$ and $\theta_1 + \theta_2 + \theta_3 < 0$.
Thus, we obtain
\[ 0 \leq\frac{2\theta_1}{k} < 2,
\quad -2 < \frac{2\theta_2}{k} < 2,
\quad -2 \leq \frac{2\theta_3}{k} < 0,\]
and hence $-4 < \frac{2}{k}(\theta_1 + \theta_2 + \theta_3) < 0$.
On the other hand,
Equality~\eqref{0103-2} and the assumption of our lemma
imply that $\frac{2}{k}(\theta_1 + \theta_2 + \theta_3) \in \Omega \cap \MB{Q} = \MB{Z}$.
Therefore, we have $\frac{2}{k}(\theta_1 + \theta_2 + \theta_3) \in \{-1, -2, -3\}$.

If $\frac{2}{k}(\theta_1 + \theta_2 + \theta_3) = -3$,
then by Equality~\eqref{0103-2}, we have $m = \frac{2}{3}$. 
This leads to a contradiction, as the multiplicity $m$ must be an integer.
If $\frac{2}{k}(\theta_1 + \theta_2 + \theta_3) = -2$,
then by Equality~\eqref{0103-2}, we have $m = 1$.
Thus, we obtain $|V(\G)| = 4$.
A connected regular graph with $4$ vertices is either $K_4$ or $C_4$,
but neither of them has four distinct eigenvalues.
Hence, this case also leads to a contradiction.
If $\frac{2}{k}(\theta_1 + \theta_2 + \theta_3) = -1$,
then by Equality~\eqref{0103-2}, we have $m = 2$.
Thus, we obtain $|V(\G)| = 7$.
The handshaking lemma implies that the valency $k$ is even.
Since $\G$ is not complete, we have $k \neq 6$.
Moreover, by Lemma~\ref{0101-3}, we have $k \neq 2$,
and hence $k=4$.
According to the computer search in~\cite{vDS},
the only connected $4$-regular graph with $7$ vertices and four distinct eigenvalues is the complement of $C_7$,
which is isomorphic to the circulant graph $\Cay(\MB{Z}_7 , \{\pm 1, \pm 2\})$.
This graph has the eigenvalue $\lambda = \zeta_7 + \zeta_7^{-1} + \zeta_7^2 + \zeta_7^{-2} = -1 -\zeta_7^3 - \zeta_7^4$,
and hence, by our assumption,
$\Omega \ni \frac{2 \lambda}{4} = \frac{1}{2}(-1 -\zeta_7^3 - \zeta_7^4) \in \MB{Q}(\zeta_7)$.
However, the condition $\frac{1}{2}(-1 -\zeta_7^3 - \zeta_7^4) \in \Omega \cap \MB{Q}(\zeta_7)$ contradicts Lemma~\ref{0206-2}.
\end{proof}

\begin{lem} \label{0115-1}
Let $\G$ be a genuine strongly $\ell$-walk-regular graph with distinct eigenvalues $k, \theta_1, \theta_2, \theta_3$.
If $\frac{2\theta_i}{k}$ is an algebraic integer for every $i \in \{1,2,3\}$, then the eigenvalues $\theta_1, \theta_2, \theta_3$ must be integers.
\end{lem}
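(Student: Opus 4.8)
The plan is to apply van Dam's trichotomy (Lemma~\ref{0109-1}) to the spectrum $\{[k]^1,[\theta_1]^{m_1},[\theta_2]^{m_2},[\theta_3]^{m_3}\}$ of $\G$. Case~(i), where $m_1=m_2=m_3$, is ruled out immediately by Lemma~\ref{0109-2}, since we are assuming that every $\tfrac{2\theta_i}{k}$ is an algebraic integer. In case~(iii) all four eigenvalues are integral, which is exactly the desired conclusion. So the whole work lies in excluding case~(ii), where $\G$ has precisely two integral eigenvalues: one is the valency $k$, and the other two $\theta_i$ form a conjugate pair $\sigma>\bar\sigma$ with $\sigma,\bar\sigma=\tfrac12(a\pm\sqrt b)$ (with $a,b\in\ZM$ and $b$ not a perfect square) of common multiplicity $m$, while the remaining $\theta_i$, call it $\rho$, is an integer of some multiplicity $m'$.

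For case~(ii) I would first turn the algebraic-integer hypothesis into numerical constraints. Both $\tfrac{2\sigma}{k}$ and $\tfrac{2\bar\sigma}{k}$ are algebraic integers lying in $[-2,2)$ (because $-k\le\sigma,\bar\sigma<k$), their sum $\tfrac{2a}{k}$ and product $\tfrac{a^2-b}{k^2}$ are rational algebraic integers, hence integers, and, writing $b=f^2g$ with $g$ square-free and $g\ge 2$, we have $\tfrac{2\sigma}{k}=\tfrac{a}{k}+\tfrac{f}{k}\sqrt g\in\Omega\cap\MB{Q}(\sqrt g)$. Feeding this into Lemma~\ref{0129-1} to pin down $\tfrac{a}{k}$ and $\tfrac{f}{k}$, and then imposing the interval bound, forces $g$ and the relevant integer coefficients to be tiny: the only surviving possibilities are that $\{\tfrac{2\sigma}{k},\tfrac{2\bar\sigma}{k}\}$ is one of $\{\sqrt2,-\sqrt2\}$, $\{\sqrt3,-\sqrt3\}$, $\{\tfrac{1+\sqrt5}{2},\tfrac{1-\sqrt5}{2}\}$, $\{\tfrac{-1+\sqrt5}{2},\tfrac{-1-\sqrt5}{2}\}$.

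For each of these four shapes I would then bring in the remaining data. Since $\rho$ is also among the $\theta_i$, the number $\tfrac{2\rho}{k}$ is a rational algebraic integer in $[-2,2)$, hence $\tfrac{2\rho}{k}\in\{-2,-1,0,1\}$. The trace identities of Lemma~\ref{0127-1}, applied to $\{[k]^1,[\rho]^{m'},[\sigma]^{m},[\bar\sigma]^{m}\}$, then express $\rho$ and $|V(\G)|$ in terms of $k,m,m'$; combined with $k\ge 3$ for a genuine strongly walk-regular graph (Lemma~\ref{0101-3}) and, in the sub-case $\rho=-k$, the forced symmetry of a bipartite spectrum, this discards most candidates and leaves only finitely many candidate spectra. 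The stubborn ones — those still satisfying every linear and quadratic trace condition, for instance a putative $\{[k]^1,[-k]^1,[\tfrac{k}{\sqrt2}]^{m},[-\tfrac{k}{\sqrt2}]^{m}\}$ — are eliminated using strong $\ell$-walk-regularity itself: substituting the three non-$k$ eigenvalues into~\eqref{0103-1} of Lemma~\ref{1231-1} and separating the $\MB{Q}$-part from the $\sqrt g$-part (equivalently, comparing the growth rates of $\sigma^\ell,\bar\sigma^\ell,\rho^\ell$) shows that the identity has no solution with $\ell$ odd and $\ell\ge 3$. This contradiction closes case~(ii), so only case~(iii) remains and $\theta_1,\theta_2,\theta_3\in\ZM$.

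I expect the last step to be the main obstacle: the algebraic-integer and trace arguments are essentially bookkeeping, but some candidate spectra genuinely survive all of them, and only the walk-regularity relation~\eqref{0103-1} distinguishes a true genuine strongly walk-regular graph from a merely spectrally feasible one. Verifying that this exponential, Diophantine-flavoured identity fails for every odd $\ell\ge 3$ on each remaining candidate is where the real care is required.
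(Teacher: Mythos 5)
Your skeleton coincides with the paper's: van Dam's trichotomy (Lemma~\ref{0109-1}), Lemma~\ref{0109-2} to dispose of case~(i), and in case~(ii) Lemma~\ref{0129-1} together with the interval bounds, which indeed reduce the conjugate pair $\frac{2\theta}{k}$ to the four shapes $\pm\sqrt{2}$, $\pm\sqrt{3}$, $\frac{1\pm\sqrt{5}}{2}$, $\frac{-1\pm\sqrt{5}}{2}$, exactly as in the paper. The gap is in your last step. You assert that every candidate surviving the trace conditions of Lemma~\ref{0127-1} is then killed by showing that the identity~\eqref{0103-1} has no solution for any odd $\ell\ge 3$. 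That blanket claim fails precisely on the dangerous candidates: by Corollary~\ref{0109-6}, any triple with $\theta_1+\theta_2+\theta_3=0$ satisfies~\eqref{0103-1} at $\ell=3$, and the triples $\bigl(\frac{k}{4}(1+\sqrt{5}),\,\frac{k}{4}(1-\sqrt{5}),\,-\frac{k}{2}\bigr)$ and $\bigl(\frac{k}{2},\,\frac{k}{4}(-1+\sqrt{5}),\,\frac{k}{4}(-1-\sqrt{5})\bigr)$, which arise from your four shapes combined with $\frac{2\rho}{k}\in\{-2,-1,0,1\}$, are exactly of this form. No separation of the rational part from the $\sqrt{g}$-part, and no growth-rate comparison, can exclude them; they have to be eliminated through the multiplicities, i.e.\ by solving the three identities of Lemma~\ref{0127-1} for the multiplicities and checking that no solution with positive integers and integral $k\ge 3$ exists --- which is exactly the computation the paper carries out (the contradictions around $(k-3)(4\beta+3)=3$ and $(k-3)(2\beta+1)=-5$).

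Two further signs that the bookkeeping was not actually carried through: your sample ``stubborn'' spectrum $\{[k]^1,[-k]^1,[\tfrac{k}{\sqrt{2}}]^{m},[-\tfrac{k}{\sqrt{2}}]^{m}\}$ already violates the quadratic condition $\sum_i m_i\lambda_i^2=nk$ (it forces $k<2$), so it never reaches the stage you reserve it for; and the one case-(ii) candidate that genuinely passes both trace identities, namely $\rho=0$ with the pair $\frac{k}{4}(-1\pm\sqrt{5})$ (for instance $k=4$, $n=10$), is the one where walk-regularity is truly needed --- there your identity argument does work, and the paper dispatches it more simply via Corollary~\ref{0113-3}. So the plan is repairable with the tools you already list, but as written the decisive step rests on a claim that is false for the sum-to-zero triples and unverified for the rest, and the division of labour between the trace identities and the walk-regularity relation is assigned to the wrong cases.
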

\begin{proof}
By Lemma~\ref{0109-2},
it suffices to show that case~(ii) of Lemma~\ref{0109-1} does not occur.
Suppose, for contradiction, that $\G$ has eigenvalues $\theta_1 = \theta \in \MB{Z}$, $\theta_2 = a+b\sqrt{c}$ and $\theta_3 = a-b\sqrt{c}$,
where $a$ and $b$ are rational numbers,
and $c$ is a square-free integer.
We may assume that $b > 0$ without loss of generality.

First, we consider the case where $c \not\equiv 1 \pmod{4}$.
From our assumption and Lemma~\ref{0129-1},
we have $\frac{2}{k}(a \pm b \sqrt{c}) \in \Omega \cap \MB{Q}(\sqrt{c}) = \{ p+q\sqrt{c} \mid p, q \in \MB{Z} \}$.
Thus, there exist integers $p$ and $q$ such that
$\frac{2}{k}(a \pm b \sqrt{c}) = p \pm q \sqrt{c}$.
Moreover, we have $q > 0$ because $b > 0$.
Since $-2 \leq \frac{2}{k}(a \pm b \sqrt{c}) < 2$,
we have
\begin{align}
-2 < p + q \sqrt{c} < 2, \label{0109-4} \\
-2 \leq p - q \sqrt{c} < 2. \label{0109-5}
\end{align}
Eliminate $p$ using Inequalities~\eqref{0109-4} and~\eqref{0109-5}.
In addition, from $q > 0$ it follows that $0 < q < \frac{2}{\sqrt{c}} \leq \frac{2}{\sqrt{2}} < 2$,
and hence $q=1$.
Eliminate $\sqrt{c}$ using Inequalities~\eqref{0109-4} and~\eqref{0109-5} again.
Then,
we have $-2 < p < 2$, i.e., $p = 0, \pm 1$.
However, $(p,q) = (\pm 1, 1)$ does not satisfy either~\eqref{0109-4} or~\eqref{0109-5} because $c \geq 2$.
Therefore, we have $(p,q) = (0, 1)$,
i.e., $(\theta_2, \theta_3) = (\frac{k}{2}\sqrt{c}, -\frac{k}{2}\sqrt{c})$.
By Corollary~\ref{0113-3},
the remaining integral eigenvalue must satisfy $\theta = 0$.
However, these eigenvalues do not sum to zero,
contradicting Lemma~\ref{0127-1}.

Next, we consider the case where $c \equiv 1 \pmod{4}$.
From our assumption and Lemma~\ref{0129-1},
we have $\frac{2}{k}(a \pm b \sqrt{c}) \in \Omega \cap \MB{Q}(\sqrt{c}) = \{ p+\frac{1+\sqrt{c}}{2}q \mid p, q \in \MB{Z} \}$.
Thus, there exist integers $p$ and $q$ such that $\frac{2}{k}(a + b \sqrt{c}) = (p+\frac{q}{2}) + \frac{q}{2}\sqrt{c}$,
and hence $\frac{2}{k}(a - b \sqrt{c}) = (p+\frac{q}{2}) - \frac{q}{2}\sqrt{c}$.
Moreover, we have $q > 0$ because $b > 0$.
Since $-2 \leq \frac{2}{k}(a \pm b \sqrt{c}) < 2$,
we have
\begin{align}
-2 < \left(p+\frac{q}{2} \right) + \frac{q}{2}\sqrt{c} < 2, \label{0113-1} \\
-2 \leq \left(p+\frac{q}{2} \right) - \frac{q}{2}\sqrt{c} < 2. \label{0113-2}
\end{align}
Eliminate $p$ using Inequalities~\eqref{0113-1} and~\eqref{0113-2}.
In addition, from $q > 0$ it follows that $0 < q < \frac{4}{\sqrt{c}} \leq \frac{4}{\sqrt{5}} < 2$,
and hence $q=1$.
%
Eliminate $\sqrt{c}$ using Inequalities~\eqref{0113-1} and~\eqref{0113-2} again.
Then,
we have $-4 < 2p+1 < 4$, i.e., $p \in \{-2, -1, 0, 1\}$.
However, $(p,q) = (-2, 1), (1, 1)$ does not satisfy either~\eqref{0113-1} or~\eqref{0113-2} because $c \geq 5$.
To complete the proof,
we will reject both cases $(p,q) = (0,1), (-1,1)$.

We consider the case where $(p,q) = (0,1)$.
Inequality~\eqref{0113-1} implies that $c = 5$.
Thus, the two irrational eigenvalues are determined to be $a \pm b \sqrt{5} = \frac{k}{4}(1 \pm \sqrt{5})$.
From Lemma~\ref{0403-1},
the possible values for the remaining integer eigenvalue $\theta$ are $\pm \frac{k}{2}, 0$, or $-k$.
If $\theta = -k$, then the graph $\G$ is bipartite.
However, its spectrum is not symmetric with respect to $0$.
If $\theta = 0$, then it contradicts Corollary~\ref{0113-3}.
If $\theta = \frac{k}{2}$, then the eigenvalues do not sum to zero.
We would like to reject the case $\theta = -\frac{k}{2}$.
Let $\Spec(\G) = \{ [k]^1, [-\frac{k}{2}]^{\alpha}, [\frac{k}{4}(1 \pm \sqrt{5})]^{\beta} \}$.
By Lemma~\ref{0127-1},
we have
\begin{align*}
n &= 1 + \alpha + 2\beta, \\
0 &= 
k - \frac{k}{2}\alpha
+ \left\{ \frac{k}{4}(1 + \sqrt{5}) \right\}\beta
+ \left\{ \frac{k}{4}(1 - \sqrt{5}) \right\} \beta, \\
nk &= k^2 + \frac{k^2}{4} \alpha
+ \left\{ \frac{k}{4}(1 + \sqrt{5}) \right\}^2 \beta
+ \left\{ \frac{k}{4}(1 - \sqrt{5}) \right\}^2 \beta.
\end{align*}
Eliminating $n$ and $\alpha$, and reformulating, yields $(k-3)(4\beta + 3) = 3$.
This implies that $(k, \beta) = (4, 0), (6, -1/2)$.
Both cases contradict the condition $\beta > 0$.

Finally, we consider the case where $(p,q) = (-1,1)$.
Inequality~\eqref{0113-2} implies that $c = 5$.
The two irrational eigenvalues are determined to be $a \pm b \sqrt{5} = \frac{k}{4}(-1 \pm \sqrt{5})$.
From~Lemma~\ref{0403-1},
the possible values for the remaining integer eigenvalue $\theta$ are $\pm \frac{k}{2}, 0$, or $-k$.
If $\theta = -k$, then the graph $\G$ is bipartite.
However, its spectrum is not symmetric with respect to $0$.
If $\theta = 0$, then it contradicts Corollary~\ref{0113-3}.
If $\theta = -\frac{k}{2}$, 
then setting $\Spec(\G) = \{ [k]^1, [-\frac{k}{2}]^{\alpha}, [\frac{k}{4}(-1 \pm \sqrt{5})]^{\beta} \}$ gives $\alpha + \beta = 2$ as the sum of the eigenvalues is zero.
This implies that $\alpha = \beta = 1$,
and hence $|V(\G)| = 4$.
The only connected regular graphs with $4$ vertices are $K_4$ and $C_4$, but neither of these graphs is appropriate in this case.
We would like to reject the case $\theta = \frac{k}{2}$.
Let $\Spec(\G) = \{ [k]^1, [\frac{k}{2}]^{\alpha}, [\frac{k}{4}(-1 \pm \sqrt{5})]^{\beta} \}$.
By Lemma~\ref{0127-1},
we have
\begin{align*}
n &= 1 + \alpha + 2\beta, \\
0 &= 
k + \frac{k}{2}\alpha
+ \left\{ \frac{k}{4}(-1 + \sqrt{5}) \right\}\beta
+ \left\{ \frac{k}{4}(-1 - \sqrt{5}) \right\} \beta, \\
nk &= k^2 + \frac{k^2}{4} \alpha
+ \left\{ \frac{k}{4}(-1 + \sqrt{5}) \right\}^2 \beta
+ \left\{ \frac{k}{4}(-1 - \sqrt{5}) \right\}^2 \beta.
\end{align*}
Eliminating $n$ and $\alpha$, and reformulating, yields $(k-3)(2\beta + 1) = -5$.
In particular, we have $k - 3 < 0$, i.e., $k=2$.
This contradicts Lemma~\ref{0101-3}.
\end{proof}

\begin{thm} \label{0122-1}
Let $\G$ be a genuine strongly $\ell$-walk-regular graph with distinct eigenvalues $k > \theta_1 > \theta_2 > \theta_3$.
If $\frac{2\theta_i}{k}$ is an algebraic integer for every $i \in \{1,2,3\}$, then $(\theta_1, \theta_2, \theta_3) = (\frac{k}{2}, 0, -\frac{k}{2})$. 
\end{thm}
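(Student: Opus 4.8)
The plan is to combine the integrality conclusion of Lemma~\ref{0115-1} with Lemma~\ref{0403-1} in order to reduce to a short list of candidate triples, and then to eliminate every candidate except the desired one using the walk-regularity identity of Lemma~\ref{1231-1}.

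First I would apply Lemma~\ref{0115-1}: under the hypothesis that $\frac{2\theta_i}{k}\in\Omega$ for each $i$, the eigenvalues $\theta_1,\theta_2,\theta_3$ are integers. Since $\G$ is connected and $k$-regular with more than one distinct eigenvalue, every eigenvalue other than $k$ lies in $[-k,k)$, so $\frac{2\theta_i}{k}\in\Omega\cap[-2,2)$, and Lemma~\ref{0403-1} forces $\theta_i\in\{-k,-\frac{k}{2},0,\frac{k}{2}\}$ for each $i$ (the value $k$ being excluded). As $\theta_1>\theta_2>\theta_3$ are three distinct elements of this four-element set, there are exactly $\binom{4}{3}=4$ possibilities:
\[ (\theta_1,\theta_2,\theta_3)\in\left\{\left(\tfrac{k}{2},0,-\tfrac{k}{2}\right),\ \left(\tfrac{k}{2},0,-k\right),\ \left(\tfrac{k}{2},-\tfrac{k}{2},-k\right),\ \left(0,-\tfrac{k}{2},-k\right)\right\}. \]

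To finish, I would invoke Lemma~\ref{1231-1}. Since $\G$ is genuine strongly $\ell$-walk-regular, $\ell$ is odd, hence $\ell\ge 3$, and so $(\theta_2-\theta_3)\theta_1^{\ell}+(\theta_3-\theta_1)\theta_2^{\ell}+(\theta_1-\theta_2)\theta_3^{\ell}=0$. Substituting each of the three non-target triples and using $(-1)^{\ell}=-1$ together with $k\ne 0$, each should collapse to an identity of the shape $k^{\ell+1}c_{\ell}=0$ with $c_{\ell}$ a nonzero rational for every odd $\ell\ge 3$; concretely I expect both $(\frac{k}{2},0,-k)$ and $(0,-\frac{k}{2},-k)$ to give $c_{\ell}=2^{-\ell}-2^{-1}$ and $(\frac{k}{2},-\frac{k}{2},-k)$ to give $c_{\ell}=2^{1-\ell}-1$, all of which are negative for $\ell\ge 2$. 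This contradiction eliminates the three non-target triples, leaving only $(\theta_1,\theta_2,\theta_3)=(\frac{k}{2},0,-\frac{k}{2})$; and this case is genuinely attainable, since by Corollary~\ref{0113-3} a regular graph with a spectrum of this shape is strongly $\ell$-walk-regular for every odd $\ell$.

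I do not anticipate a real obstacle here: the argument is a finite case check. The only points requiring care are recalling that the interval is $[-k,k)$ rather than $[-k,k]$ (so $\theta_i=k$ is impossible) and the sign bookkeeping with $(-1)^{\ell}$ when substituting into Lemma~\ref{1231-1}. As a cross-check, the triple $(0,-\frac{k}{2},-k)$ can alternatively be discarded straight from Lemma~\ref{0127-1}: writing $m_1,m_2,m_3\ge 1$ for the multiplicities of $\theta_1,\theta_2,\theta_3$, the vanishing of the eigenvalue sum gives $k=\frac{k}{2}m_2+km_3$, i.e.\ $\frac{m_2}{2}+m_3=1$, which is impossible.
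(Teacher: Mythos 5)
Your proposal is correct and follows essentially the same route as the paper: Lemma~\ref{0115-1} gives integrality, Lemma~\ref{0403-1} restricts the eigenvalues to $\{-k,-\frac{k}{2},0,\frac{k}{2}\}$, and Lemma~\ref{1231-1} (with $\ell$ odd, hence $\ell\geq 3$) eliminates the remaining triples; your coefficients $2^{-\ell}-2^{-1}$ and $2^{1-\ell}-1$ check out and are indeed nonzero. The only cosmetic difference is that the paper first uses bipartiteness (forced by $\theta_3=-k$, together with symmetry of the spectrum) to reduce to the single triple $(\frac{k}{2},-\frac{k}{2},-k)$ before applying the identity, whereas you check all three non-target triples directly.
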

\begin{proof}
By Lemma~\ref{0115-1},
we have $\theta_1, \theta_2, \theta_3 \in \MB{Z}$.
By Lemma~\ref{0403-1},
$\theta_i \in \{\pm \frac{k}{2}, 0, -k\}$.
If $\theta_3 = -k$,
then $\G$ is bipartite,
and hence $\theta_1 = \frac{k}{2}$ and $\theta_2 = -\frac{k}{2}$.
However, $(\theta_1, \theta_2, \theta_3) = (\frac{k}{2}, -\frac{k}{2}, -k)$ does not satisfy Equality~\eqref{0103-1} for any $\ell \geq 3$.
Thus, the eigenvalues must satisfy $(\theta_1, \theta_2, \theta_3) = (\frac{k}{2}, 0, -\frac{k}{2})$.
\end{proof}

\begin{cor}
Let $\G$ be a genuine strongly walk-regular graph with distinct eigenvalues $k > \theta_1 > \theta_2 > \theta_3$.
If $\G$ admits perfect state transfer, then $(\theta_1, \theta_2, \theta_3) = (\frac{k}{2}, 0, -\frac{k}{2})$. 
\end{cor}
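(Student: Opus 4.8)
The plan is to derive this corollary directly from Theorem~\ref{0122-1}, feeding it the algebraic‑integrality input supplied by the necessary condition for perfect state transfer (Lemma~\ref{0128-5}) together with the fact that the eigenvalue support is the full spectrum (Corollary~\ref{0404-1}).

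First I would record that a genuine strongly walk‑regular graph $\G$ is, by definition, strongly $\ell$‑walk‑regular for some integer $\ell \geq 2$, and being genuine forces $\ell$ to be odd, hence $\ell \geq 3$ (by \cite[Theorem~3.4]{vDO}, as noted after Lemma~\ref{1231-1}). So Theorem~\ref{0122-1} will apply to $\G$ once its hypothesis is checked, namely that $\frac{2\theta_i}{k}$ is an algebraic integer for every $i \in \{1,2,3\}$.

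Next, suppose $\G$ admits perfect state transfer, say from $d^*e_x$ to $d^*e_y$ for distinct vertices $x,y$. By Corollary~\ref{0404-1}, $\Theta_P(e_x) = \sigma(P)$; and since $\G$ is $k$‑regular we have $P = \frac{1}{k}A$, so $\sigma(P) = \{1, \frac{\theta_1}{k}, \frac{\theta_2}{k}, \frac{\theta_3}{k}\}$. In particular each $\frac{\theta_i}{k}$ lies in the eigenvalue support $\Theta_P(e_x)$, and Lemma~\ref{0128-5} then yields $\frac{2\theta_i}{k} \in \Omega$ for $i = 1,2,3$. With this hypothesis verified, Theorem~\ref{0122-1} gives $(\theta_1,\theta_2,\theta_3) = (\frac{k}{2}, 0, -\frac{k}{2})$, which is the claim.

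I do not expect a genuine obstacle here: all the substantive work has already been carried out — in Lemma~\ref{0124-1}/Corollary~\ref{0404-1} to exclude a proper‑subset support, and in Lemmas~\ref{0109-2} and~\ref{0115-1} and Theorem~\ref{0122-1} to pin down the spectrum. The only point requiring a little care is the bookkeeping: Lemma~\ref{1225-2} alone guarantees only that two of the three non‑principal eigenvalues lie in $\Theta_P(e_x)$, whereas Theorem~\ref{0122-1} needs the algebraic‑integrality of $\frac{2\theta_i}{k}$ for \emph{all three}; this is exactly the gap closed by invoking Corollary~\ref{0404-1} to upgrade the support to the entire set $\sigma(P)$.
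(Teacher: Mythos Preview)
Your proposal is correct and follows essentially the same route as the paper: invoke Corollary~\ref{0404-1} to get $\Theta_P(e_x)=\sigma(P)$, then Lemma~\ref{0128-5} to obtain $\frac{2\theta_i}{k}\in\Omega$ for all $i$, and finally apply Theorem~\ref{0122-1}. The only difference is that you spell out a few details (e.g.\ $P=\frac{1}{k}A$ and the existence of an odd $\ell\geq 3$) that the paper leaves implicit.
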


\begin{proof}
Let $P = P(\G)$.
By Corollary~\ref{0404-1}, we have $\Theta_P(e_x) = \sigma(P)$ for any $x \in V(\G)$.
Since $\G$ admits perfect state transfer,
it follows from Lemma~\ref{0128-5} that $\frac{2\theta_i}{k}$ is an algebraic integer for every $i \in \{1,2,3\}$.
Therefore, by Theorem~\ref{0122-1}, we conclude that $(\theta_1, \theta_2, \theta_3) = (\frac{k}{2}, 0, -\frac{k}{2})$.
\end{proof}

\begin{cor} \label{0214-1}
Let $\G$ be a genuine strongly walk-regular graph with distinct eigenvalues $k > \theta_1 > \theta_2 > \theta_3$.
Then, $\G$ is periodic if and only if $(\theta_1, \theta_2, \theta_3) = (\frac{k}{2}, 0, -\frac{k}{2})$. 
\end{cor}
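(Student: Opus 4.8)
The statement is an ``if and only if.'' The forward implication is essentially already contained in the machinery of Section~\ref{S4}, so I would dispatch it first and then concentrate on the converse.

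\emph{Forward direction.} Assume $\G$ is periodic. Since $\G$ is connected and $k$-regular, its discriminant is $P=\tfrac1k A$, and hence $\sigma(P)=\{1,\tfrac{\theta_1}{k},\tfrac{\theta_2}{k},\tfrac{\theta_3}{k}\}$. By \cite[Lemma~3.2]{K} (the left-hand implication of Figure~\ref{0126-1}), periodicity forces $2\mu\in\Omega$ for every $\mu\in\sigma(P)$; in particular $\tfrac{2\theta_i}{k}\in\Omega$ for $i\in\{1,2,3\}$. A genuine strongly walk-regular graph is strongly $\ell$-walk-regular for some (odd) $\ell\ge3$, so Theorem~\ref{0122-1} applies and gives $(\theta_1,\theta_2,\theta_3)=(\tfrac k2,0,-\tfrac k2)$.

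\emph{Converse.} Suppose $(\theta_1,\theta_2,\theta_3)=(\tfrac k2,0,-\tfrac k2)$. Again $P=\tfrac1k A$, so $\sigma(P)=\{1,\tfrac12,0,-\tfrac12\}$, and the key observation is that each of these numbers is a cosine of a rational multiple of $\pi$: $1=\cos0$, $\tfrac12=\cos\tfrac\pi3$, $0=\cos\tfrac\pi2$, $-\tfrac12=\cos\tfrac{2\pi}{3}$. I would then invoke the spectral correspondence between the time evolution matrix and the discriminant of a Grover walk, namely that every eigenvalue of $U$ is either $\pm1$ or of the form $e^{\pm i\arccos\mu}$ for some $\mu\in\sigma(P)$ (this is the reduction alluded to in the preliminaries; see \cite{HKSS2017} and the references there). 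Consequently every eigenvalue of $U$ lies in $\{1,-1,e^{\pm i\pi/3},\pm i,e^{\pm 2\pi i/3}\}$, and each of these eight numbers is a $12$th root of unity. Since $U$ is unitary, hence diagonalizable, this yields $U^{12}=I$, so $\G$ is periodic.

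The arithmetic is trivial once the inputs are in place — in the converse the whole content is that $\operatorname{lcm}(1,2,3,4,6)=12$ — so the only step needing care is the fact used in the converse: that the ``non-inherited'' eigenvalues of $U$, i.e.\ those not of the form $e^{\pm i\arccos\mu}$ with $\mu\in\sigma(P)$, can only be $\pm1$. I expect this to be the main (though modest) obstacle: stating the correct form of the spectral mapping theorem for Grover walks, rather than any genuine difficulty. If a fully self-contained argument is preferred, one can instead start from the operator identity $d\,U^n d^{*}=T_n(P)$ (which underlies Theorem~\ref{pst}) to obtain $T_{12}(P)=I$, and then analyze $U$ directly on the orthogonal complement of the image of $d^{*}$, where $S=2d^*d-I$ acts as $-I$; this, however, amounts to re-deriving the spectral correspondence.
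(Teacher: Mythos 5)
Your proof is correct and follows essentially the same route as the paper: the forward direction via \cite[Lemma~3.2]{K} combined with Theorem~\ref{0122-1}, and the converse via the spectral mapping theorem showing that every eigenvalue of $U$ is a $12$th root of unity, whence $U^{12}=I$.
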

\begin{proof}
If $\G$ is periodic,
it follows from~\cite[Lemma~3.2]{K} that $\frac{2\theta_i}{k}$ is an algebraic integer for every $i \in \{1,2,3\}$.
By Theorem~\ref{0122-1},
we have $(\theta_1, \theta_2, \theta_3) = (\frac{k}{2}, 0, -\frac{k}{2})$.
Conversely, if the eigenvalues satisfy $(\theta_1, \theta_2, \theta_3) = (\frac{k}{2}, 0, -\frac{k}{2})$,
then the spectral mapping theorem (see, for example, \cite[Theorem~2.4]{K}) yields $\sigma(U) = \{ \pm1, e^{\pm \frac{\pi}{2}i},
e^{\pm \frac{\pi}{3}i},
e^{\pm \frac{2\pi}{3}i}\}$,
and hence $U^{12} = I$.
Therefore, we see that the graph $\G$ is periodic.
\end{proof}

We note that a very general form of the spectral mapping theorem is given in~\cite{SS}.
However, we cited~\cite{K} in the previous proof for convenience in setting up the notation.

\section{Regular graphs with spectrum $\{ [k], [\pm \frac{k}{2}], [0] \}$} \label{S5}

In the previous section,
we saw that if a genuine strongly walk-regular graph admits perfect state transfer or is periodic,
then its spectrum must be of the form $\{[k]^1, [\frac{k}{2}]^{\alpha}, [0]^{\beta}, [-\frac{k}{2}]^{\gamma}\}$.
In this section, we provide more precise analysis of the multiplicities of spectra of this form.
We note that regular graphs with four distinct eigenvalues are walk-regular~\cite[Section~3]{vD}, that is,
if $A$ is the adjacency matrix of such a graph,
then the quantity $(A^r)_{x,x}$ is independent of the vertex $x$ and depends only on a positive integer $r$.

\begin{thm} \label{0206-1}
Let $\G = (V,E)$ be a 
$k$-regular graph with spectrum $\Spec(\Gamma) = \{[k]^1, [\frac{k}{2}]^{\alpha}, [0]^{\beta}, [-\frac{k}{2}]^{\gamma} \}$,
and let $n=|V|$.
Then, the following statements hold:
\begin{enumerate}[(i)]
\item $\alpha = \frac{2n}{k} - 3, \beta = n+3 - \frac{4n}{k}$, and $\gamma = \frac{2n}{k} -1$.
In particular, these are positive integers.
\item $2k \leq n \leq \frac{3}{4}k^3$.
\item Let $t_x$ be the number of triangles passing through vertex $x \in V$.
Then, we have $t_x = \frac{3k^3}{8n}$.
In particular, this is an integer.
%
\item Let $q$ be the number of quadrangles in $\G$,
and let $q_x$ be the number of quadrangles passing through vertex $x \in V$.
Then,
\begin{align*}
q &= \frac{k}{32} \left( 3k^3 + nk^2 - 8nk + 4n \right ), \\
q_x &= \frac{k}{8n} \left( 3k^3 + nk^2 - 8nk + 4n \right ).
\end{align*}
In particular, these are integers.
\end{enumerate}
\end{thm}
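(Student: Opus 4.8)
The plan is to extract everything from the three moment identities of Lemma~\ref{0127-1} together with the fact that a connected regular graph with four distinct eigenvalues is walk-regular (\cite[Section~3]{vD}); writing $A=A(\G)$, the latter means $(A^r)_{x,x}=\tfrac1n\tr(A^r)$ for every $r\ge 1$ and every $x\in V$. For (i), apply Lemma~\ref{0127-1} to the spectrum $\{[k]^1,[\tfrac k2]^{\alpha},[0]^{\beta},[-\tfrac k2]^{\gamma}\}$, using $2|E|=nk$ since $\G$ is $k$-regular. This gives $1+\alpha+\beta+\gamma=n$, $k+\tfrac k2(\alpha-\gamma)=0$, and $k^2+\tfrac{k^2}{4}(\alpha+\gamma)=nk$. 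The second identity yields $\gamma=\alpha+2$, the third yields $\alpha+\gamma=\tfrac{4n}{k}-4$, and solving gives $\alpha=\tfrac{2n}{k}-3$, $\gamma=\tfrac{2n}{k}-1$, hence $\beta=n+3-\tfrac{4n}{k}$ from the first; since $\G$ has exactly four distinct eigenvalues, each of $\tfrac k2$, $0$, $-\tfrac k2$ actually occurs, so $\alpha,\beta,\gamma$ are positive integers.

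For (iii) and (ii): using (i), $\tr(A^3)=k^3+\tfrac{k^3}{8}(\alpha-\gamma)=\tfrac34 k^3$, so walk-regularity gives $(A^3)_{x,x}=\tfrac{3k^3}{4n}$. Every closed $3$-walk at $x$ runs around a triangle through $x$ in one of two directions, so $(A^3)_{x,x}=2t_x$ and hence $t_x=\tfrac{3k^3}{8n}$, an integer since it counts triangles. For (ii), $\alpha\ge1$ forces $n\ge2k$, while the total number of triangles is $\tfrac16\tr(A^3)=\tfrac{k^3}{8}\ge1$; by walk-regularity $t_x$ is constant over $x$, so $t_x\ge1$, i.e.\ $\tfrac{3k^3}{8n}\ge1$, giving $n\le\tfrac{3k^3}{8}\le\tfrac34 k^3$.

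For (iv): by (i), $\tr(A^4)=k^4+\tfrac{k^4}{16}(\alpha+\gamma)=k^4+\tfrac{k^3(n-k)}{4}$, hence $(A^4)_{x,x}=\tfrac{3k^4+nk^3}{4n}$. Separately, classify the closed $4$-walks $x=w_0,w_1,w_2,w_3,w_0$: those with $w_2=x$ number $k^2$; those with $w_2\neq x$ and $w_3=w_1$ number $k(k-1)$; and the remaining ones have all four vertices distinct, so they run around a $4$-cycle through $x$ and contribute exactly two walks per such cycle. Thus $(A^4)_{x,x}=k^2+k(k-1)+2q_x=k(2k-1)+2q_x$. Equating the two expressions for $(A^4)_{x,x}$ and solving gives $q_x=\tfrac{k}{8n}(3k^3+nk^2-8nk+4n)$, and since each $4$-cycle meets four vertices, $q=\tfrac n4 q_x=\tfrac{k}{32}(3k^3+nk^2-8nk+4n)$; both are integers as they enumerate $4$-cycles.

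The only delicate point is the combinatorial bookkeeping for the closed $4$-walks: one must check that the three cases genuinely partition the closed $4$-walks from $x$ (in particular that ``$w_2=x$'' and ``$w_2\neq x$ and $w_3=w_1$'' are disjoint and together exhaust the degenerate walks) and that the non-degenerate closed $4$-walks through $x$ correspond two-to-one to $4$-cycles through $x$. Everything else is routine algebra; as sanity checks, the identities $\tr(A^3)=\tfrac34 k^3$ and $\tr(A^4)=nk(2k-1)+8q$ can be verified on $C_4$ and $K_4$.
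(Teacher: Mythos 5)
Your proposal is correct and follows essentially the same route as the paper: solve the three moment identities of Lemma~\ref{0127-1} for $\alpha,\beta,\gamma$, then use walk-regularity of regular graphs with four distinct eigenvalues to convert $\tr(A^3)$ and $\tr(A^4)$ into the local counts $t_x$ and $q_x$. The only cosmetic differences are that the paper obtains $(A^3)_{x,x}=\tfrac{3k^3}{4n}$ via the Hoffman polynomial and cites \cite[Lemma~5.5]{K} for the quadrangle formula, whereas you derive both directly from the closed-walk decomposition (your version of (ii) even yields the slightly sharper bound $n\le\tfrac{3}{8}k^3$).
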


\begin{proof}
(i) has already been proved in the end of the proof of Lemma~\ref{0124-1}.

(ii) For the lower bound, it follows from $1 \leq \alpha = \frac{2n}{k} - 3$.
We show the upper bound.
Let $A$ be the adjacency matrix of $\G$.
Since $\sum_{\lambda \in \Spec(\G)} \lambda^3 = \frac{3}{4}k^3 > 0$,
we have $(A^3)_{x,x} \geq 1$ for any vertex $x$.
Let $h(x)$ be the Hoffman polynomial of $\G$,
that is, $h(x) := (x- \frac{k}{2})x(x + \frac{k}{2})$.
Then we have $h(A) = \frac{h(k)}{n}J$~\cite[Theorem~3.3.2]{BH},
i.e.,
\[ A^3 = \frac{k^2}{4}A + \frac{h(k)}{n}J. \]
Thus, we have $1 \leq (A^3)_{x,x} = (\frac{k^2}{4}A + \frac{h(k)}{n}J)_{x,x} = \frac{3k^3}{4n}$,
and hence $n \leq \frac{3}{4}k^3$.

(iii) We recall that $\G$ is walk-regular.
Moreover,
Since $(A^3)_{x,x}$ counts the number of closed walks of length $3$ starting from $x$ and ending at $x$,
we have $(A^3)_{x,x} = 2t_x$.
Thus,
\begin{align*}
2nt_x = \sum_{x \in V}(A^3)_{x,x}
= \tr(A^3) = \sum_{\lambda \in \Spec(\G)} \lambda^3 = \frac{3k^3}{4},
\end{align*}
and hence $t_x = \frac{3k^3}{8n}$.

(iv) The number of quadrangles in a regular graph can be counted using eigenvalues (see~\cite[Lemma~5.5]{K} for details).
We have 
\[ n(2k^2 - k) + 8q = \sum_{\lambda \in \Spec(\G)} \lambda^4 =
\frac{k^4}{4} \left( \frac{n}{k} + 3 \right). \]
Solving this for $q$ yields $q = \frac{k}{32}( 3k^3 + nk^2 - 8nk + 4n)$.
Since $\G$ is walk-regular, we have
\[ q_x = \frac{4q}{n} = \frac{k}{8n} \left( 3k^3 + nk^2 - 8nk + 4n \right ). \]
\end{proof}

By Theorem~\ref{0206-1},
we can narrow down the candidates for graphs whose spectrum is of the form $\{[k]^1, [\frac{k}{2}]^{\alpha}, [0]^{\beta}, [-\frac{k}{2}]^{\gamma} \}$.
Indeed, condition (ii) makes the number of vertices $n$ finite for a fixed $k$,
and conditions (i), (iii), and (iv) eliminate unsuitable values of $n$.
We call graphs to pass Theorem~\ref{0206-1} \emph{feasible graphs}.

Tables~\ref{t1} and \ref{t2} list feasible graphs for $k \leq 20$.
In these tables, there are many graphs whose existence is unknown.
The column ``Existence" provides one example that realizes the spectrum, if such graphs exist.
The graph $\G_{C([n,k], (w_1, w_2, w_3))^{\perp}}$ represents the coset graph of the dual of a projective binary three-weight $[n,k]$ code with weights $(w_1,w_2,w_3)$.
For more details on this graph, see~\cite{KKSSW}.
In addition, the symbol ``$-$" in the tables denotes that the spectrum passes Theorem~\ref{0206-1} but the non-existence of a graph is shown by other reasons.
The reasons are briefly described in the rightmost column of the tables.
For a graph $\G$ and a positive integer $m$,
we denote by $\G \otimes J_m$
the graph whose adjacency matrix is $A(\G) \otimes J_m$.
The graph $\G \otimes J_m$ is called the \emph{$m$-coclique extension} of $\G$,
which is obtained by replacing each vertex of $\G$ with an $m$-coclique.

As mentioned in the proof of Theorem~\ref{0206-1}~(iii),
$(A^r)_{x,x}$ is independent of vertex $x$ and depends only on a positive integer $r$.
Let $c_r$ denote this value. 
Then,
\[ c_r = \frac{1}{n} \sum_{\lambda \in \Spec(A)} \lambda^r, \]
and in particular, the right-hand side must be a non-negative integer.
However, imposing this condition for $r$ up to $20$, in addition to the four conditions of Theorem~\ref{0206-1} did not refine the tables.
Also, Nozaki established an upper bound on the number of vertices of connected regular graphs by using the number of irreducible closed walks i.e., walks that do not contain any backtracking $x \sim y \sim x$~\cite[Theorem~2]{N}.  
In our attempt, however, replacing the count of irreducible closed walks with that of closed walks did not lead to any improvement in the tables.

\begin{table}[H]
\centering
\caption{Feasible graphs for $k \in \{4, 6, \dots, 14\}$} \label{t1}
\begin{tabular}{c|c|c|c|c} \hline
$k$ & $n$ & Spectrum & Existence & Comment \\ \hline
4 & 8 &  $\{ [4]^{1}, [2]^{1}, [0]^{3}, [-2]^{3} \}$ & $\overline{H(3,2)}$ & \\ 
4 & 12 &  $\{ [4]^{1}, [2]^{3}, [0]^{3}, [-2]^{5} \}$ & $L(H(3,2))$ & \\ 
\hline 
6 & 27 &  $\{ [6]^{1}, [3]^{6}, [0]^{12}, [-3]^{8} \}$ & $H(3,3)$ & \\ 
6 & 81 &  $\{ [6]^{1}, [3]^{24}, [0]^{30}, [-3]^{26} \}$ & ? & $q=0$ \\ 
\hline 
8 & 16 &  $\{ [8]^{1}, [4]^{1}, [0]^{11}, [-4]^{3} \}$ & $\overline{H(3,2)} \otimes J_2$ & \\ 
8 & 24 &  $\{ [8]^{1}, [4]^{3}, [0]^{15}, [-4]^{5} \}$ & $L(H(3,2)) \otimes J_2$ & \\ 
8 & 32 &  $\{ [8]^{1}, [4]^{5}, [0]^{19}, [-4]^{7} \}$ & $\G_{C([8,5],(2,4,6))^{\perp}}$ & \cite[Table~1]{KKSSW} \\ 
8 & 48 &  $\{ [8]^{1}, [4]^{9}, [0]^{27}, [-4]^{11} \}$ & ? & \\ 
8 & 64 &  $\{ [8]^{1}, [4]^{13}, [0]^{35}, [-4]^{15} \}$ & $\G_{C([8,6],(2,4,6))^{\perp}}$ & \cite[Table~1]{KKSSW} \\ 
8 & 96 &  $\{ [8]^{1}, [4]^{21}, [0]^{51}, [-4]^{23} \}$ & ? & \\ 
8 & 192 &  $\{ [8]^{1}, [4]^{45}, [0]^{99}, [-4]^{47} \}$ & ? & \\ 
\hline 
10 & 25 &  $\{ [10]^{1}, [5]^{2}, [0]^{18}, [-5]^{4} \}$ & $-$ & \cite[Proposition~8]{vDS} \\ 
10 & 75 &  $\{ [10]^{1}, [5]^{12}, [0]^{48}, [-5]^{14} \}$ & ? & \\ 
10 & 125 &  $\{ [10]^{1}, [5]^{22}, [0]^{78}, [-5]^{24} \}$ & ? & \\ 
10 & 375 &  $\{ [10]^{1}, [5]^{72}, [0]^{228}, [-5]^{74} \}$ & ? & \\ 
\hline 
12 & 24 &  $\{ [12]^{1}, [6]^{1}, [0]^{19}, [-6]^{3} \}$ & $\overline{H(3,2)} \otimes J_3$ & \\ 
12 & 36 &  $\{ [12]^{1}, [6]^{3}, [0]^{27}, [-6]^{5} \}$ & $L(H(3,2)) \otimes J_3$ & \\ 
12 & 54 &  $\{ [12]^{1}, [6]^{6}, [0]^{39}, [-6]^{8} \}$ & $H(3,3) \otimes J_2$ & \\ 
12 & 72 &  $\{ [12]^{1}, [6]^{9}, [0]^{51}, [-6]^{11} \}$ & ? & \\ 
12 & 108 &  $\{ [12]^{1}, [6]^{15}, [0]^{75}, [-6]^{17} \}$ & ? & \\ 
12 & 162 &  $\{ [12]^{1}, [6]^{24}, [0]^{111}, [-6]^{26} \}$ & ? & \\ 
12 & 216 &  $\{ [12]^{1}, [6]^{33}, [0]^{147}, [-6]^{35} \}$ & ? & \\ 
12 & 324 &  $\{ [12]^{1}, [6]^{51}, [0]^{219}, [-6]^{53} \}$ & ? & \\ 
12 & 648 &  $\{ [12]^{1}, [6]^{105}, [0]^{435}, [-6]^{107} \}$ & ? & \\ 
\hline 
14 & 49 &  $\{ [14]^{1}, [7]^{4}, [0]^{38}, [-7]^{6} \}$ & ? & \\ 
14 & 147 &  $\{ [14]^{1}, [7]^{18}, [0]^{108}, [-7]^{20} \}$ & ? & \\ 
14 & 343 &  $\{ [14]^{1}, [7]^{46}, [0]^{248}, [-7]^{48} \}$ & ? & \\ 
14 & 1029 &  $\{ [14]^{1}, [7]^{144}, [0]^{738}, [-7]^{146} \}$ & ? &
\\ \hline 
\end{tabular}
\end{table}

\begin{table}[H]
\centering
\caption{Feasible graphs for $k \in \{16, 18, 20\}$} \label{t2}
\begin{tabular}{c|c|c|c|c} \hline
$k$ & $n$ & Spectrum & Existence & Comment \\ \hline
16 & 32 &  $\{ [16]^{1}, [8]^{1}, [0]^{27}, [-8]^{3} \}$ & $\overline{H(3,2)} \otimes J_4$ & \\ 
16 & 48 &  $\{ [16]^{1}, [8]^{3}, [0]^{39}, [-8]^{5} \}$ & $L(H(3,2)) \otimes J_4$ & \\ 
16 & 64 &  $\{ [16]^{1}, [8]^{5}, [0]^{51}, [-8]^{7} \}$ & $\G_{C([8,5],(2,4,6))^{\perp}} \otimes J_2$ & \\ 
16 & 96 &  $\{ [16]^{1}, [8]^{9}, [0]^{75}, [-8]^{11} \}$ & ? & \\ 
16 & 128 &  $\{ [16]^{1}, [8]^{13}, [0]^{99}, [-8]^{15} \}$ & $\G_{C([8,6],(2,4,6))^{\perp}} \otimes J_2$ & \\ 
16 & 192 &  $\{ [16]^{1}, [8]^{21}, [0]^{147}, [-8]^{23} \}$ & ? & \\ 
16 & 256 &  $\{ [16]^{1}, [8]^{29}, [0]^{195}, [-8]^{31} \}$ & ? & \\ 
16 & 384 &  $\{ [16]^{1}, [8]^{45}, [0]^{291}, [-8]^{47} \}$ & ? & \\ 
16 & 512 &  $\{ [16]^{1}, [8]^{61}, [0]^{387}, [-8]^{63} \}$ & ? & \\ 
16 & 768 &  $\{ [16]^{1}, [8]^{93}, [0]^{579}, [-8]^{95} \}$ & ? & \\ 
16 & 1536 &  $\{ [16]^{1}, [8]^{189}, [0]^{1155}, [-8]^{191} \}$ & ? & \\ 
\hline
18 & 81 &  $\{ [18]^{1}, [9]^{6}, [0]^{66}, [-9]^{8} \}$ & $H(3,3) \otimes J_3$ & \\ 
18 & 243 &  $\{ [18]^{1}, [9]^{24}, [0]^{192}, [-9]^{26} \}$ & ? & \\ 
18 & 729 &  $\{ [18]^{1}, [9]^{78}, [0]^{570}, [-9]^{80} \}$ & ? & \\ 
18 & 2187 &  $\{ [18]^{1}, [9]^{240}, [0]^{1704}, [-9]^{242} \}$ & ? & \\ 
\hline 
20 & 40 &  $\{ [20]^{1}, [10]^{1}, [0]^{35}, [-10]^{3} \}$ & $\overline{H(3,2)} \otimes J_5$ & \\ 
20 & 50 &  $\{ [20]^{1}, [10]^{2}, [0]^{43}, [-10]^{4} \}$ & ? & \\ 
20 & 60 &  $\{ [20]^{1}, [10]^{3}, [0]^{51}, [-10]^{5} \}$ & $L(H(3,2)) \otimes J_5$ & \\ 
20 & 100 &  $\{ [20]^{1}, [10]^{7}, [0]^{83}, [-10]^{9} \}$ & ? & \\ 
20 & 120 &  $\{ [20]^{1}, [10]^{9}, [0]^{99}, [-10]^{11} \}$ & ? & \\ 
20 & 150 &  $\{ [20]^{1}, [10]^{12}, [0]^{123}, [-10]^{14} \}$ & ? & \\ 
20 & 200 &  $\{ [20]^{1}, [10]^{17}, [0]^{163}, [-10]^{19} \}$ & ? & \\ 
20 & 250 &  $\{ [20]^{1}, [10]^{22}, [0]^{203}, [-10]^{24} \}$ & ? & \\ 
20 & 300 &  $\{ [20]^{1}, [10]^{27}, [0]^{243}, [-10]^{29} \}$ & ? & \\ 
20 & 500 &  $\{ [20]^{1}, [10]^{47}, [0]^{403}, [-10]^{49} \}$ & ? & \\ 
20 & 600 &  $\{ [20]^{1}, [10]^{57}, [0]^{483}, [-10]^{59} \}$ & ? & \\ 
20 & 750 &  $\{ [20]^{1}, [10]^{72}, [0]^{603}, [-10]^{74} \}$ & ? & \\ 
20 & 1000 &  $\{ [20]^{1}, [10]^{97}, [0]^{803}, [-10]^{99} \}$ & ? & \\ 
20 & 1500 &  $\{ [20]^{1}, [10]^{147}, [0]^{1203}, [-10]^{149} \}$ & ? & \\ 
20 & 3000 &  $\{ [20]^{1}, [10]^{297}, [0]^{2403}, [-10]^{299} \}$ & ? & \\ 
\hline 
\end{tabular}
\end{table}

\section{Further discussion}

From Corollary~\ref{0214-1},
all graphs in Tables~\ref{t1} and~\ref{t2} are periodic.
However, we do not yet know whether these graphs admit perfect state transfer.
In this section, we investigate this question and conclude that none of the graphs appearing in the ``Existence" column of the tables admit perfect state transfer.

First, we show that $m$-coclique extensions for $m \geq 3$ do not admit perfect state transfer.
Roughly speaking,
this can be shown by investigating symmetry of the graphs.
We will use the following result.

\begin{lem}[{\cite[Corollary~3.4]{KY}}] \label{0214-2}
Let $\G = (V, E)$ be a graph, and let $x,y \in V$.
If perfect state transfer occurs from $d^*e_x$ to $d^*e_y$ on $\G$,
then the automorphism groups that fix x and y coincide, i.e.,
$\Aut(\G)_x = \Aut(\G)_y$.
\end{lem}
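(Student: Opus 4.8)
The plan is to exploit the fact that graph automorphisms commute with the time evolution matrix. For $\sigma \in \Aut(\G)$, I would first introduce the permutation matrix $P_{\sigma} \in \MB{C}^{V \times V}$ determined by $P_{\sigma} e_v = e_{\sigma(v)}$, together with the permutation matrix $Q_{\sigma} \in \MB{C}^{\MC{A} \times \MC{A}}$ induced by the natural action $a = (o(a), t(a)) \mapsto (\sigma(o(a)), \sigma(t(a)))$ of $\sigma$ on symmetric arcs. The two identities I would establish are $Q_{\sigma} d^{*} = d^{*} P_{\sigma}$ and $Q_{\sigma} U = U Q_{\sigma}$. The first is immediate from the definition $d_{x,a} = \frac{1}{\sqrt{\deg x}} \delta_{x, t(a)}$ once one notes that $\sigma$ preserves degrees and maps the set of arcs with terminus $v$ bijectively onto the set of arcs with terminus $\sigma(v)$. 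The second follows from $U = R(2 d^{*} d - I)$: since $\sigma$ commutes with arc reversal one has $Q_{\sigma} R = R Q_{\sigma}$, and taking adjoints in $Q_{\sigma} d^{*} = d^{*} P_{\sigma}$ gives $d\, Q_{\sigma}^{-1} = P_{\sigma}^{-1} d$, whence $Q_{\sigma} (d^{*} d) = (d^{*} d) Q_{\sigma}$; combining these yields $Q_{\sigma} U Q_{\sigma}^{-1} = U$.

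With these in hand the argument is short. Suppose perfect state transfer occurs from $d^{*} e_x$ to $d^{*} e_y$ at time $\tau$, so that $U^{\tau} d^{*} e_x = \gamma d^{*} e_y$ for some $\gamma$ of norm one. Because $d^{*} e_x$ and $d^{*} e_y = \gamma^{-1} U^{\tau} d^{*} e_x$ are states, the vertices $x, y$ have positive degree, and for any vertices $u, v$ of positive degree the vectors $d^{*} e_u$ and $d^{*} e_v$ are supported on the (disjoint unless $u = v$) sets of arcs with terminus $u$ and with terminus $v$; hence $d^{*} e_u = d^{*} e_v$ forces $u = v$. Now take $\sigma \in \Aut(\G)_x$ and apply $Q_{\sigma}$ to $U^{\tau} d^{*} e_x = \gamma d^{*} e_y$. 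Using $Q_{\sigma} U^{\tau} = U^{\tau} Q_{\sigma}$ and $Q_{\sigma} d^{*} e_v = d^{*} e_{\sigma(v)}$, the left-hand side becomes $U^{\tau} d^{*} e_{\sigma(x)} = U^{\tau} d^{*} e_x = \gamma d^{*} e_y$ and the right-hand side becomes $\gamma d^{*} e_{\sigma(y)}$, so $d^{*} e_y = d^{*} e_{\sigma(y)}$ and therefore $\sigma(y) = y$. This proves $\Aut(\G)_x \subseteq \Aut(\G)_y$. For the reverse inclusion, take $\sigma \in \Aut(\G)_y$ and apply $Q_{\sigma}$ again: $U^{\tau} d^{*} e_{\sigma(x)} = Q_{\sigma} U^{\tau} d^{*} e_x = \gamma Q_{\sigma} d^{*} e_y = \gamma d^{*} e_{\sigma(y)} = \gamma d^{*} e_y = U^{\tau} d^{*} e_x$, and since $U^{\tau}$ is unitary, hence injective, we get $d^{*} e_{\sigma(x)} = d^{*} e_x$, so $\sigma(x) = x$. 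Thus $\Aut(\G)_x = \Aut(\G)_y$.

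The only genuinely nontrivial step is verifying $Q_{\sigma} U = U Q_{\sigma}$; this is a routine but slightly fiddly check on the entries of $U$ (or, as above, a short manipulation of $U = R(2 d^{*} d - I)$), and everything else is bookkeeping. I would also be careful not to skip the remark that $d^{*} e_x$ and $d^{*} e_y$ are nonzero, since otherwise equality of images under $d^{*}$ would not recover equality of vertices; here this is automatic because these vectors are states.
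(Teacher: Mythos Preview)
The paper does not prove this lemma; it is imported verbatim from \cite[Corollary~3.4]{KY}, so there is no in-paper argument to compare against. Your proof is correct and self-contained: you lift $\sigma \in \Aut(\G)$ to a permutation $Q_\sigma$ of $\MB{C}^{\MC{A}}$, verify $Q_\sigma U = U Q_\sigma$ through the factorisation $U = R(2d^*d - I)$ (using $Q_\sigma R = R Q_\sigma$ and $Q_\sigma d^*d = d^*d\, Q_\sigma$), and then exploit this commutation together with the unitarity of $U^\tau$ to pass from $\sigma(x)=x$ to $\sigma(y)=y$ and conversely. The remark that $d^*$ separates vertices of positive degree, and that $x,y$ have positive degree because $d^*e_x,\,d^*e_y$ are states, is exactly the small point needed to close the argument. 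This is the natural approach, and nothing is missing.
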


Let $[n] = \{1,2, \dots, n\}$ for a positive integer $n$.
The vertex set and edge set of the $m$-coclique extension $\G \otimes J_m$ of a graph $\G$ can be expressed as 
\begin{align*}
V(\G \otimes J_m) &= V(\G) \times [m], \\
E(\G \otimes J_m) &= \{ \{(x,i), (y,j)\} \mid \{x,y\} \in E(\G), i,j \in [m] \},
\end{align*}
respectively.
Let $S_m$ be the symmetric group on the set $[m]$.
For a vertex $z$ of $\G$ and a permutation $\sigma \in S_m$,
the mapping $\varphi_{z,\sigma}: V(\G \otimes J_m) \to V(\G \otimes J_m)$ defined by 
\begin{align*}
    \varphi_{z,\sigma}(x, i) = \begin{cases}
        (x, \sigma(i)) &\text{if } x = z,\\
        (x,i) &\text{otherwise.}
    \end{cases}
\end{align*}
is an automorphism of $\G \otimes J_m$.

\begin{thm} \label{0322-1}
Let $\G$ be a graph.
Then $\G \otimes J_m$ does not admit perfect state transfer for any integer $m \geq 3$.
\end{thm}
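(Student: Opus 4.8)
The plan is to invoke Lemma~\ref{0214-2}: if perfect state transfer occurs from $d^{*}e_{u}$ to $d^{*}e_{v}$ on a graph, then the vertex stabilizers $\Aut(\cdot)_{u}$ and $\Aut(\cdot)_{v}$ coincide. The automorphisms $\varphi_{z,\sigma}$ described just above the theorem give plenty of stabilizer elements that act only inside a single fiber $\{z\}\times[m]$, so the strategy is: for any two distinct vertices $u=(x,i)$ and $v=(y,j)$ of $\G\otimes J_{m}$, exhibit an automorphism fixing $u$ but not $v$, which contradicts the equality of stabilizers.

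Concretely, suppose for contradiction that perfect state transfer occurs from $d^{*}e_{(x,i)}$ to $d^{*}e_{(y,j)}$ with $(x,i)\neq(y,j)$. First I would treat the case $x\neq y$: pick any transposition $\tau\in S_{m}$ with $\tau(j)\neq j$ (this needs only $m\geq2$); since $x\neq y$ the automorphism $\varphi_{y,\tau}$ fixes $(x,i)$, whereas $\varphi_{y,\tau}(y,j)=(y,\tau(j))\neq(y,j)$, so $\varphi_{y,\tau}\in\Aut(\G\otimes J_{m})_{(x,i)}\setminus\Aut(\G\otimes J_{m})_{(y,j)}$, contradicting Lemma~\ref{0214-2}. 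In the remaining case $x=y$ we must have $i\neq j$; here I would use $m\geq3$ to choose $k\in[m]\setminus\{i,j\}$ and set $\sigma=(j\ k)$. Then $\sigma(i)=i$, so $\varphi_{x,\sigma}$ fixes $(x,i)$, while $\varphi_{x,\sigma}(x,j)=(x,k)\neq(x,j)$, once more contradicting the equality of stabilizers. This exhausts all possibilities for a pair of distinct vertices, so no perfect state transfer can occur on $\G\otimes J_{m}$.

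I do not expect a genuine obstacle: the argument is a short two-case analysis once Lemma~\ref{0214-2} and the explicit automorphisms $\varphi_{z,\sigma}$ are in hand. The only point requiring care is that the hypothesis $m\geq3$ (as opposed to $m\geq2$) is essential precisely in the case $x=y$, where one must move $j$ inside the fiber while keeping $i$ fixed, which is impossible when the fiber has only two elements; this is consistent with the fact that the statement fails for $m=2$, since $K_{2,2}=K_{2}\otimes J_{2}$ does admit perfect state transfer.
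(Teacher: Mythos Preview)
Your proof is correct and follows essentially the same approach as the paper: invoke Lemma~\ref{0214-2} and exhibit an automorphism $\varphi_{z,\sigma}$ that fixes $(x,i)$ but moves $(y,j)$. The paper's version is slightly more concise, treating both cases $x=y$ and $x\neq y$ at once by choosing $k\in[m]\setminus\{i,j\}$ (possible since $m\geq3$) and using the single automorphism $\varphi_{y,(jk)}$, but your case split is equally valid and even isolates exactly where the hypothesis $m\geq3$ is needed.
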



\begin{proof}
Suppose that the graph $\G \otimes J_m$ admits perfect state transfer from $d^*e_{(x,i)}$ to $d^*e_{(y, j)}$.
Since $m \geq 3$, there exists $k \in [m] \setminus \{i,j\}$.
Then, the automorphism $\varphi_{y, (jk)}$
fixes $(x,i)$ but not $(y, j)$.
This contradicts the equality $\Aut(\G \otimes J_m)_{(x,i)} = \Aut(\G \otimes J_m)_{(y,j)}$ in Lemma~\ref{0214-2}.
\end{proof}

This theorem does not hold for $m = 2$ because $C_4 = P_2 \otimes J_2$ is a counterexample.

Next, we discuss the minimal time at which perfect state transfer occurs.
The following proposition enables us to verify by computer whether the graphs appearing in Tables~\ref{t1} and~\ref{t2} admit perfect state transfer.

\begin{pro} \label{0322-2}
Let $\G = (V, E)$ be a connected $k$-regular graph whose eigenvalues are given by $\sigma(A(\G)) = \{k ,\pm \frac{k}{2}, 0\}$.
If $\G$ admits perfect state transfer,
its minimal time is either $6$ or $12$.
\end{pro}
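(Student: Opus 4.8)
The plan is to combine a divisibility constraint on the transfer time, coming from Theorem~\ref{pst}, with the fact that $U=U(\G)$ has order dividing $12$. First I would fix the spectral data: since $\G$ is $k$-regular, $P=P(\G)=\frac1k A(\G)$, so $\sigma(P)=\{1,\tfrac12,0,-\tfrac12\}$. The three non-principal eigenvalues of $\G$ sum to $\tfrac k2+0-\tfrac k2=0$, so by Corollary~\ref{0109-6} $\G$ is strongly $3$-walk-regular; being a connected regular graph with four distinct eigenvalues, it is thus a genuine strongly walk-regular graph, and since it admits perfect state transfer Corollary~\ref{0404-1} gives $\Theta_P(e_x)=\sigma(P)=\{1,\tfrac12,0,-\tfrac12\}$ for every vertex $x$. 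This identification of the eigenvalue support is the crucial point: a strictly smaller support such as $\{1,\tfrac12,-\tfrac12\}$ would a priori permit a transfer time that is merely a multiple of $3$.

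Now suppose perfect state transfer occurs from $d^*e_x$ to $d^*e_y$ at time $\tau$. By condition~(C) of Theorem~\ref{pst}, every $\lambda\in\Theta_P(e_x)$ is of the form $\cos\tfrac{j}{\tau}\pi$ for some integer $j$ (the parity of $j$ is dictated by whether $E_\lambda e_x=E_\lambda e_y$ or $=-E_\lambda e_y$, but here only the existence of such a $j$ is needed). Taking $\lambda=0$ forces $\tfrac{j}{\tau}\in\tfrac12+\MB{Z}$, which has an integer solution $j$ only if $2\mid\tau$; taking $\lambda=\tfrac12$ forces $\tfrac{j}{\tau}\in\pm\tfrac13+2\MB{Z}$, which has an integer solution only if $3\mid\tau$. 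Hence $6\mid\tau$. On the other hand, the spectral mapping argument in the proof of Corollary~\ref{0214-1} shows $U^{12}=I$, so $U^{\tau}$ depends only on $\tau$ modulo $12$; consequently the minimal transfer time $\tau_0$ satisfies $1\le\tau_0\le 12$, for otherwise $U^{\tau_0-12}=U^{\tau_0}$ would realise the transfer at the smaller time $\tau_0-12$. Together with $6\mid\tau_0$ this leaves $\tau_0\in\{6,12\}$.

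I expect the main obstacle to be the first step — securing $\Theta_P(e_x)=\sigma(P)$, and with it the divisibilities $2\mid\tau$ and $3\mid\tau$ extracted from Theorem~\ref{pst} — since once that is in place the reduction modulo $12$ is routine. Pushing the parity bookkeeping in condition~(C) one step further would in fact show $\tau_0=6$ whenever perfect state transfer occurs (the sign pattern forced at $\tau=12$ makes $e_x=e_y$), but $\{6,12\}$ is all that is needed for the computer verification that follows.
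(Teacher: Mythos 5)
Your proof is correct, and its first half coincides with the paper's: both use Corollaries~\ref{0109-6} and~\ref{0404-1} to get $\Theta_P(e_x)=\sigma(P)=\{1,\pm\tfrac12,0\}$ and then extract $2\mid\tau$ and $3\mid\tau$ from the requirement in Theorem~\ref{pst}(C) that each eigenvalue in the support be $\cos\tfrac{j}{\tau}\pi$ for an integer $j$. Where you differ is in the upper bound on the minimal time. The paper carries the parity analysis of (C)(b)--(c) further, shows $1,\pm\tfrac12\in\Theta_P^+$, and then invokes the \emph{sufficiency} direction of Theorem~\ref{pst} to conclude that perfect state transfer actually occurs at time $6$ or at time $12$ according to the sign attached to the eigenvalue $0$; you instead cap the minimal time at $12$ via the periodicity $U^{12}=I$ coming from the spectral mapping computation in Corollary~\ref{0214-1}. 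Both routes are valid; yours avoids the sufficiency direction and the sign bookkeeping entirely, at the cost of importing periodicity. Your closing remark is also correct and is in fact sharper than the statement as the paper proves it: for any $\tau$ with $6\mid\tau$ the only integers $j$ with $\cos\tfrac{j}{\tau}\pi\in\{1,\pm\tfrac12\}$ are even, so $1,\pm\tfrac12\in\Theta_P^+$, and if $0\in\Theta_P^+$ as well then $e_x=\sum_{\lambda}E_\lambda e_x=\sum_{\lambda}E_\lambda e_y=e_y$, which is excluded since $x\neq y$; hence $0\in\Theta_P^-$ and the minimal time, when perfect state transfer occurs at all, must be $6$.
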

\begin{proof}
We assume that $\G$ admits perfect state transfer from $d^*e_x$ to $d^* e_y$ for $x,y \in V$.
By Corollary~\ref{0109-6}, $\G$ is genuine strongly walk-regular,
and hence 
it follows from Corollary~\ref{0404-1} that $\Theta_A(e_x) = \sigma(A)$.
Let $P$ be the discriminant of $\G$.
By Theorem~\ref{pst},
we have $E_{\lambda}e_x = \pm E_{\lambda}e_y$ for any $\lambda \in \sigma(P)$, and thus we define
\begin{align*}
\Theta_P^+ &= \{ \lambda \in \Theta_P(e_x) \mid E_{\lambda}e_x = E_{\lambda}e_y \}, \\
\Theta_P^- &= \{ \lambda \in \Theta_P(e_x) \mid E_{\lambda}e_x = - E_{\lambda}e_y \}.
\end{align*}
The conditions (b) and (c) of (C) in Theorem~\ref{pst} imply that,
since $(1, \frac{1}{2}, -\frac{1}{2}, 0)
= (\cos \frac{0}{6}\pi, \cos \frac{2}{6}\pi, \cos \frac{4}{6}\pi, \cos \frac{3}{6}\pi)$,
the times at which perfect state transfer can occur are multiples of $6$,
and we have $1, \pm \frac{1}{2} \in \Theta_P^+$.
If $0 \in \Theta_P^+$, we have
\[ \left( 1, \frac{1}{2}, -\frac{1}{2}, 0 \right)
= \left(\cos \frac{0}{12}\pi, \cos \frac{4}{12}\pi, \cos \frac{8}{12}\pi, \cos \frac{6}{12}\pi \right), \]
and hence perfect state transfer occurs at time $12$.
If $0 \in \Theta_P^-$, we have
\[ \left(1, \frac{1}{2}, -\frac{1}{2}, 0 \right)
= \left(\cos \frac{0}{6}\pi, \cos \frac{2}{6}\pi, \cos \frac{4}{6}\pi, \cos \frac{3}{6}\pi \right), \]
and hence perfect state transfer occurs at time $6$.
In particular, the minimum time at which perfect state transfer occur is either $6$ or $12$.
\end{proof}

Using Theorem~\ref{0322-1}, Proposition~\ref{0322-2} and a computer,
it is confirmed that none of the graphs appearing in the ``Existence" column of Tables~\ref{t1} and~\ref{t2}
admit perfect state transfer.
At present, it is unknown whether there exist any graphs with eigenvalues $\{k ,\pm \frac{k}{2}, 0\}$ that admit perfect state transfer.

\begin{question}
Does there exist a connected $k$-regular graph with eigenvalues $\{k ,\pm \frac{k}{2}, 0\}$ that admits perfect state transfer?
If not, prove that no such graph exists.
\end{question}

\section*{Acknowledgements}
We would like to thank Mr. Fang Muh Yeong for providing the key idea for the proof of Lemma~\ref{0416-1}.
This paper is based on the master's thesis of the third author, Harunobu Yata,
and we would like to thank his supervisor at the time, Professor Norio Konno.
This work is supported by JSPS KAKENHI (Grant Number JP24K16970).

\section*{Data Availability Statement}
This study is purely theoretical and does not involve any data generation or analysis.

\end{document}